\newtheorem{theorem}{Theorem}[section]
\newtheorem{lemma}[theorem]{Lemma}
\newtheorem{proposition}[theorem]{Proposition}
\newtheorem{remark}[theorem]{Remark}
\newcommand{\ens}[1]{\mathbb{#1}}
\newcommand{\M}{\mathcal{M}}
\newcommand{\R}{\mathbb{R}}
\newcommand{\C}{\mathbb{C}}
\newcommand{\QQ}{\mathbb{Q}}
\newcommand{\LL}{\mathcal{L}}
\newcommand{\NN}{\mathcal{N}}
\newcommand{\Q}{\mathcal{Q}}
\newcommand{\B}{\mathcal{B}}
\def\derpar#1#2{\frac{\partial#1}{\partial#2}}
\def\var{\varepsilon}
\def\signff{\bigskip\bigskip\hspace{80mm}
\vbox{{\sc Francis Filbet\par\vspace{3mm}
Universit\'e de Lyon,\par
Universit\'e Lyon I, CNRS \par
UMR 5208, Institut Camille Jordan \par
43, Boulevard du 11 Novembre 1918\par
69622 Villeurbanne cedex, FRANCE\par\vspace{3mm}
e-mail:} filbet@math.univ-lyon1.fr }}
\def\signsj{\bigskip\bigskip\hspace{80mm}
\vbox{{\sc Shi Jin\par\vspace{3mm}
 Department of Mathematics \par
University of Wisconsin\par
Madison, WI 53706, USA 
\par\vspace{3mm} e-mail:} jin@math.wisc.edu }}
\title[Asymptotic preserving schemes for kinetic equations]{A class of asymptotic preserving schemes for kinetic equations and related problems with stiff sources}\thanks{F. Filbet is partially supported by the french ANR  project ``Jeunes Chercheurs'' \textit{M\'ethodes Num\'eriques pour les \'Equations Cin\'etiques} (MNEC). S. Jin was
partially supported by NSF grant No. DMS-0608720, NSF FRG grant
DMS-0757285, and a Van Vleck Distinguished
Research Prize from University of Wisconsin-Madison.}
\author{Francis Filbet \and Shi Jin }
\begin{document}

\maketitle

\begin{abstract}
In this paper, we propose a general framework to design asymptotic 
preserving schemes for the Boltzmann kinetic kinetic and related 
equations. Numerically solving these equations are challenging
due to the nonlinear stiff collision (source)
 terms induced by small mean free or relaxation time.
 We propose to penalize the nonlinear collision term
by a BGK-type relaxation term, which can be solved explicitly even if 
discretized implicitly in time. Moreover, the BGK-type relaxation operator
helps to drive the density distribution toward the local Maxwellian, thus 
natually imposes an asymptotic-preserving scheme in the Euler limit.
The scheme so designed  does not need any nonlinear iterative solver or 
the use of Wild Sum. It 
is uniformly stable in terms of the (possibly small)
Knudsen number, and can capture the macroscopic fluid dynamic (Euler) limit
even if the small scale determined by the  Knudsen number is not numerically
resolved. It is also consistent to the compressible Navier-Stokes equations
if the viscosity and heat conductivity are numerically resolved. 
The method is  applicable to many other related problems, such as
hyperbolic systems with stiff relaxation, and high order parabilic equations.
\end{abstract}

\tableofcontents

\section{Introduction}\label{sec1}
\setcounter{equation}{0}
The Boltzmann equation describes the time evolution of the density distribution
of a dilute gas of
particles when the only interactions taken into account are binary
elastic collisions. For space variable $x \in \Omega \in \R^{d_x}$, 
particle velocity $v \in \R^{d_v}$ ($d_v \ge 2$), the Boltzmann eqaution
reads:
 \begin{equation}
\label{eq:B}
 \derpar{f}{t} \,+\, v \cdot \nabla_x f \,=\, \frac{1}{\varepsilon}\,\Q(f)
 \end{equation}
where $f:=f(t,x,v)$ is the time-dependent particles distribution
function in the phase space.  The parameter $\varepsilon>0$ is the 
dimensionless Knudsen number which is the  ratio the mean free path over
a typical length scale such as the size of the spatial domain, thus
measures the rarefiedness of the gas.  The Boltzmann collision operator $\Q$
is a quadratic operator,
 \begin{equation} \label{eq:Q}
 \Q (f)(v) = \int_{\R^{d_v}}
 \int_{\ens{S}^{d_v-1}}  B(|v-v_\star|, \cos \theta) \,
 \left( f^\prime_\star f^\prime - f_\star f \right) \, d\sigma \, dv_\star.
 \end{equation}
We used the shorthanded notation $f = f(v)$, $f_\star = f(v_\star)$,
$f^\prime = f(v^\prime)$, $f_\star^\prime = f(v_\star^\prime)$. The
velocities of the colliding pairs $(v,v_\star)$ and
$(v^\prime,v^\prime_\star)$ are related by
 \begin{equation*}
\left\{
\begin{array}{l}
\displaystyle{ v^\prime   = v - \frac{1}{2} \big((v-v_\star) - |v-v_\star|\,\sigma\big),} 
\\
\,
\\
\displaystyle{ v^\prime_\star = v - \frac{1}{2} \big((v-v_\star) + |v-v_\star|\,\sigma\big),}
\end{array}\right.
 \end{equation*}
with $\sigma\in \ens{S}^{d_v-1}$.
The collision kernel $B$ is a non-negative function which by
physical arguments of invariance only depends on $|v-v_\star|$ and
$\cos \theta = {u} \cdot \sigma$ (where ${u} = (v-v_\star)/|v-v_\star|$ is the normalized relative velocity). In this work we assume that $B$ is locally integrable, given  by
\begin{equation*}
    B(|u|, \cos \theta) = C_\gamma \, |u|^\gamma,
    \end{equation*}
for some $\gamma \in (0,1]$ and a constant $C_\gamma >0$.

Boltzmann's collision operator has the fundamental properties of
conserving mass, momentum and energy: at the formal level
 \begin{equation}
\label{cons:Q}
 \int_{{\R}^{d_v}}\Q(f) \, \phi(v)\,dv = 0, \qquad
 \phi(v)=1,v,|v|^2,
 \end{equation}
and it satisfies the well-known Boltzmann's $H$ theorem
 \begin{equation*}
 - \frac{d}{dt} \int_{{\R}^{d_v}} f \log f \, dv = - \int_{{\R}^{d_v}} \Q(f)\log(f) \, dv \geq 0.
 \end{equation*}
The functional $- \int f \log f$ is the {\em entropy} of the
solution. Boltzmann's $H$ theorem implies that any equilibrium
distribution function, {\em i.e.}, any function which is a maximum of the
entropy, has the form of a local Maxwellian distribution
 \begin{equation*}
 \M_{\rho,u,T}(v)=\frac{\rho}{(2\pi T)^{d_v/2}}
 \exp \left( - \frac{\vert u - v \vert^2} {2T} \right), 
 \end{equation*}
where $\rho,\,u,\,T$ are the {\em density}, {\em macroscopic velocity}
and {\em temperature} of the gas, defined by
 \begin{eqnarray}
\label{rut}
&& \rho = \int_{{\R}^{d_v}}f(v)\,dv=\int_{{\R}^{d_v}}\M{\rho,u,T}(v), \quad u =
 \frac{1}{\rho}\int_{{\R}^{d_v}}v\,f(v)\,dv=
 \frac{1}{\rho}\int_{{\R}^{d_v}}v\,\M{\rho,u,T}(v)\,dv
\\
&&
 \quad T = {1\over{d_v\rho}}
 \int_{{\R}^{d_v}}\vert u - v \vert^2\,f(v)\,dv={1\over{d_v\rho}}
 \int_{{\R}^{d_v}}\vert u - v \vert^2\,\M{\rho,u,T}(v)\,dv
 \end{eqnarray}
Therefore, when the Knudsen number $\varepsilon>0$ becomes very small,
the macroscopic description, which describe the evolution of averaged  quantities such as the 
 density $\rho$, momentum $\rho\,u$ and temperature $T$ of the gas,
 by fluid dynamics equations, namely, the
compressible Euler or Navier-Stokes equations, become adequate. Mor specifically, {\it i.e.} as $\varepsilon \rightarrow 0$, the distribution function will 
converge to a local Maxwellian $\M$, and the system (\ref{eq:Q}) becomes a  
closed system for the $2 + d_v$ moments. The conserved quantities satisfy the 
classical  Euler equations of gas dynamics for a mono-atomic gas: 
\begin{equation}
\label{eq:Euler}
\left\{
\begin{array}{l}
\displaystyle \frac{\partial \rho}{\partial t} \,+\,\nabla_x\cdot \rho \,u \,=\, 0,
\\
\,
\\
\displaystyle \frac{\partial \rho \,u }{\partial t} \,+\,\nabla_x\cdot \left(\rho \,u\otimes u \,+\, p \,{\rm I}\right) \,=\, 0,
\\
\,
\\
\displaystyle \frac{\partial E }{\partial t} \,+\,\nabla_x\cdot\left( (E\,+\, p)\,u\right)\,=\, 0,
\end{array}\right.
\end{equation}
where $E$ represents the total energy
$$
E \,\,=\,\, \frac{1}{2} \,\rho \,u^2 \,+\, \frac{d_v}{2}\,\rho\, T,
$$
and $\rm {I}$ is the identity matrix. These equations  constitute a system of $2 + d_v$ equations in $3 + d_v$ unknowns. The pressure  is related to the internal energy by the constitutive relation for a polytropic gas  
$$
p \,=\, (\gamma - 1)\,\left( E \,-\,\frac{1}{2}\,\rho \,|u|^2\right),
$$ 
where the polytropic constant $\gamma = (d_v + 2)/d_v$ represents the ratio between specific heat at constant pressure and at constant volume, thus yielding
 $p=\rho\,T$.  For small but non zero values of the Knudsen number $\varepsilon$, the evolution equation  for the moments can be derived by the so-called Chapman-Enskog expansion \cite{Cer},  applied to the Boltzmann equation. This approach gives  the Navier-Stokes equations as a second order approximation with respect to $\var$ to the solution to the Boltzmann  equation:
\begin{equation}
\label{eq:CNS}
\left\{
\begin{array}{l}
\displaystyle \frac{\partial \rho_\varepsilon}{\partial t} \,+\,\nabla_x\cdot \rho_\varepsilon \,u_\varepsilon \,=\, 0,
\\
\,
\\
\displaystyle \frac{\partial \rho_\varepsilon \,u_\varepsilon }{\partial t} \,+\,\nabla_x\cdot (\rho_\varepsilon \,u_\varepsilon\otimes u_\varepsilon \,+\, p_\varepsilon \,{\rm I}) \,=\, \varepsilon\,\nabla_x\cdot[\mu_\varepsilon\,\sigma(u_\varepsilon)],
\\
\,
\\
\displaystyle \frac{\partial E_\varepsilon }{\partial t} \,+\,\nabla_x\cdot (E_\varepsilon\,+\, p_\varepsilon)\,u_\varepsilon)\,=\, \varepsilon\, \nabla_x\cdot\left(\mu_\varepsilon\sigma(u_\varepsilon)\,u \,+\, \kappa_\varepsilon\,\nabla_x T_\varepsilon\right).
\end{array}\right.
\end{equation}
In these equations $\sigma(u)$ denotes the strain-rate tensor given by 
$$
\sigma(u) \,=\, \nabla_x u \,+\,\left(\nabla_x u\right)^T  \,-\, \frac{2}{d_v} \,\nabla_x\cdot u\,{\rm I}
$$
while the viscosity $\mu_\varepsilon =\mu(T_\varepsilon)$ and the thermal conductivity $\kappa_\varepsilon =\kappa(T_\varepsilon)$ are defined according the linearized Boltzmann operator with respect to the local Maxwellian \cite{bgl:91}.

The connection between kinetic and macroscopic fluid dynamics results from two  properties of the collision operator: 
\begin{itemize}
\item[$(i)$] conservation properties and an entropy relation that imply that the equilibria are  Maxwellian distributions for the zeroth order limit; 
\item[$(ii)$] the derivative of $\Q(f)$ satisfies a formal Fredholm alternative with a kernel related  to the conservation properties of $(i)$. 
\end{itemize}

Past progress on developing robust numerical schemes for  kinetic equations 
that also work in the fluid regimes has been guided by the fluid dynamic limit,
in the framework of {\it asymptotic-preserving} (AP) scheme. As summarized
by Jin
\cite{J}, a scheme for the kinetic equation is AP if

\begin{itemize}
\item  it preserves the
discrete analogy of the Chapman-Enskog expansion, namely, it is a
suitable scheme for the kinetic equation, yet, when holding
the mesh size and time step fixed and letting
the Knudsen number go to zero, the scheme becomes a suitable scheme
for the limiting Euler equations
\item implicit collision terms can be implemented explicitly, or at least
more efficiently than using the Newton type solvers for nonlinear
algebraic systems.
\end{itemize}

To satisfy the first condition for AP, the scheme must be driven to the
local Maxwellian when $\epsilon \to 0$. This can usually be achieved 
by a backward Euler or any $L$-stable ODE solvers for the collision term
\cite{Jin2}. Such  a scheme requires an implicit collision term to gaurantee
a uniform stability in time. However, how to invert such an implicit,
yet nonlocal and nonlinear, collision operator is a delicate numerical
issue. Namely, it is hard to realize the second condition for AP schemes.

Comparing with a multiphysics domain decomposition type method
\cite{BTP, DJ, DJM, GTPS, KNS, TM}, the AP schemes avoid the
coupling of physical equations of different scales where the
coupling conditions are difficult to obtain, and interface locations
hard to determine. The AP schemes are
based on solving one equation-- the kinetic equation, and they
become  robust macroscopic (fluid) solvers {\it automatically} when the
Knudsen number goes to zero. An AP scheme implying a numerical
convergence uniformly in the Knudsen number was proved by
Golse-Jin-Levermore for linear transport equation in the diffusion
regime \cite{GJL}. This result can be extended to essentially all AP
schemes, although the specific proof is problem dependent. For
examples of AP schemes for kinetic equations in the fluid dynamic or
diffusive regimes see for examples \cite{CP, CJR, JPT1, JPT2, JP, K, K2, GT,
BLM, LM}. The AP framework has also been extended in
\cite{CDV,DDN} for the study of the quasi-neutral limit of Euler-Poisson 
and Vlasov-Poisson systems, and in \cite{DJL, HJL} for 
all-speed (Mach number) fluid equations bridging the passage from 
compressible flows to the incompressible flows.

Since the Boltzmann collision term $\Q$  needs to be treated implicitly, 
how to invert it numerically becomes a tricky issue. 
One solution was offered by Gabetta, Pareschi and Toscani \cite{GPT}. 
They first penalize $\Q$ by a linear function $\lambda f$, and then absorb
the linearly stiff part into the time variable to remove the stiffness.
The remaining implicit nonlinear collision term is approximated by
finite terms in the Wild Sum, with the infinite sum replaced by the local
Maxwellian.  This yields a uniformly stable AP scheme for the collision
term, capruring the Euler limit when $\epsilon \to 0$. Such a time-relaxed
method was also used to develop AP Monte Carlo method, see \cite{CP0, PR}.
Nevertheless, it seems that this  method is not able to capture the 
compressible Navier-Stokes asymptotic for small $\varepsilon$. 

When the collision operator $\Q$ is the BGK collision operator
\begin{equation}
  \Q_{BGK} = \M-f\,,
\end{equation}
it is well-known that even an implicit collision term can be solved
explicitly, using the property that $\Q$ preserves mass, momentum and energy.
{\it Our new idea } is this paper is to utilize this property, and penalize
the Boltzmann collision operator $\Q$ by the BGK operator:
\begin{equation}
  \Q = [\Q -\lambda (\M-f)] + \lambda[\M-f]
\label{Q-new}
\end{equation}
where $\lambda$ is the largest spectrum of the linearized collision operator
of $\Q$ around the local Maxwellian $\M$. 
Now the first term on the right hand side of (\ref{Q-new}) is either
not stiff, or less stiff compared to the second term, thus can be discretized
{\it explicitly}, so as to avoid inverting the nonlinear operator $\Q$.
The second 
term on the right hand side of (\ref{Q-new}) is stiff, thus
will be treated implicitly. Despite this,
as mentioned earlier, the implicit BGK operator can be inverted explicitly.
Therefore we arrive at a scheme which is uniformly stable in $\epsilon$,
with an implicit source term that can be solved explicitly.
 In other words, in terms of
handling the stiffness, the general Boltzmann collision operator can be
handled as easily as the much simpler BGK operator, thus we significantly simplies
an implicit Boltzmann solver!

Although a linear penalty (by removing $\M$ on the right hand size from
(\ref{Q-new}) can also remove the stiffness, it does not have the AP 
property, unless one follows the Wild Sum procedure of \cite{GPT}. 
The BGK operator that we use in (\ref{Q-new}) helps to drive $f$ into
$\M$, thus preserves the Euler limit. This will be proved asymptotically
for prepared initial data (namely data near $\M$), and demonstrated numerically
even for general initial data.
Moreover, we will prove asymptotically that, for suitably small time-step,
this method is also consistent to the Navier-Stokes equations (\ref{eq:CNS})
for $\epsilon <<1$. 

Our method is partly motivated by the work of Haack, Jin and Liu \cite{HJL}, 
where
by subtracting the leading linear part of the pressure in the compresible
Euler equations with a low Mach number, the nonlinear stiffness in the
pressure term due to the
low Mach number is removed and an AP scheme was proposed for the
compressible Euler or Navier-Stokes equations that capture the
incompressible Euler or Navier-Stokes limit when the Mach number goes to
zero.

Our method is not restricted to the Boltzmann equation. It applies to
general nonlinear hyperbolic systems with stiff nonlinear relaxation
terms \cite{CLL, JL, Jin2, CoqP}, and higher-order parabolic equations
(see section 5).  Moreover, it applies to any {\it stiff source term that
admits a stable local equilibrium.}

In the following sections, we present a class of asymptotic preserving schemes designed for kinetic equations even if the general framework can be applied to other partial differential equations. We will focus on 
the Boltzmann equation and its hydrodynamic limit. We present different numerical tests to illustrate the efficiency of the present method. We treat particularly a multi-scale problem where the Knudsen numer $\varepsilon$ depends on the space variable and takes different values ranging from $10^{-4}$ (hydrodynamic regime) to one (kinetic regime). Finally, the last part is devoted to the design of numerical schemes for nonlinear Fokker-Planck equations for which the asymptotic  preserving scheme can be used to remove the CFL constraint of a parabolic
equation.       

\section{An Asymptotic Preserving (AP) stiff ODE solver}
\setcounter{equation}{0}

Since out method
 does not depend on the discretization of the spatial derivative,
but only on the structure of the stiff source term, 
we will first present in in the  simplest framework for 
stiff ordinary differential equations. 

Let us consider a Hilbert space $H$ and  the following nonlinear
autonomous ordinary differential system
\begin{equation}
\label{ode}
\left\{
\begin{array}{l}
\displaystyle \frac{df_\varepsilon}{dt}(t) \,=\, \frac{\Q(f_\varepsilon(t))}{\varepsilon},\qquad t\,\geq\, 0,
\\
\,
\\
f_\varepsilon(0)\,=\,f_0\,\in\, H,
\end{array}\right.
\end{equation}
where the source term $\Q(f)$ satisfies the following properties:
\begin{itemize}
\item there exists a unique stationary solution $\M$ to (\ref{ode}), which 
satisfies $\Q(\M)=0$;
\item  the solution to (\ref{ode}) converges to the steady state $\M$ when 
time goes to infinity, and the spectrum of $\nabla \Q(f)\subset \C^-
={z\in \C^-, {\rm {Im}}(z) < 0}$,
$$
0 \,<\, \alpha \,\leq \,\| \nabla \Q(f)\| \,\leq\, L,\quad  \forall \,f\,\in\, H.
$$  
\end{itemize}

\begin{remark}
The second hypothesis above  is certainly not the most general, but is convenient for our purpose. The lower bound implies that  the solution converges to 
the steady state $\M$, while the upper bound is a sufficient condition for existence and uniqueness  of a global solution. 
\end{remark}

When $\varepsilon$ becomes small, the differential equation (\ref{ode}) 
becomes stiff and explicit schemes are subject to severe stability 
constraints. Of course, implicit 
schemes allow larger time step, but new difficulty arises in
seeking the numerical solution of a fully nonlinear problem at each time step. 
Here we want to combine both advantages of implicit and explicit schemes : large time step for stiff problems and low computational cost of the numerical solution at each time step.  

We denote by $f^n$ an approximation of $f(t^n)$ with $t^n=n\,\Delta t$ and 
the time step $\Delta t>0$,
Two classical procedures handle the aforementioned difficulties well. One is to 
linearize the unknown $Q(f^{n+1})$ at time step $t^{n+1}$
around $f$ at the previous time step $f^n$:
$$
  \Q(f^{n+1})\approx \Q(f^n) + \nabla \Q(f^n) (f^{n+1}-f^n)
$$
yielding a problem that only needs to solve a linear system with coefficient
matrix depending on $\nabla \Q(f^n)$ \cite{Yee}.
This approach gives a uniformly stable time discretization without nonlinear
solvers, however, it is not AP since the right hand size, as $\epsilon \to 0$,
does not project $f^{n+1}$ to the local equilibrium $f=\M$, even if 
$f^n=\M$.  The second approach, introduced by \cite{GPT}, takes
$$
\Q(f)=  [\Q(f) -\mu f] + \mu f\,.
$$
As mentioned in the introduction, it uses the Wild Sum expansion for $\Q(f)$
on the right side, which is truncated and the remaining
 infinite series is replaced
by the local Maxwellian in order to be AP for the Euler limit. 

Under our hypothesis, the asymptotic behavior of the exact solution 
$f_\varepsilon$ is known when $\varepsilon\rightarrow 0$. Therefore,  
we split the source term of (\ref{ode}) in a stiff and non- (or less)
stiff part as 
$$
\frac{\Q(f)}{\varepsilon} \,=\, \underbrace{\frac{\Q(f)\,-\,P(f)}{\varepsilon}}_{\textrm{non stiff part}} \,+\, \underbrace{\frac{P(f)}{\varepsilon}}_{\textrm{ stiff part}},  
$$ 
where $P(f)$ is a {\it well balanced}, {\it i.e.} preserving the steady state,
$P(\M)=0$, linear operator and is close to  the source term $\Q(f)$. 
For instance, performing a simple Taylor expansion, we get 
$$
\Q(f) \,=\, \Q(\M) \,+\, \nabla \Q(\M)\,(f-\M) \,+\, O(\|f-\M\|_H^2) 
$$ 
and we may choose
$$
P(f) \,:=\, \nabla \Q(\M)\,(f-\M).
$$
Since it is not always possible to compute exactly $\nabla \Q(\M)$, 
 we  may simply choose
$$
P(f) \,:=\, L\,(f-\M)\,,
$$
where $L$ is an estimate of $\nabla \Q(\M)$.

Now, we simply apply a first order implicit-explicit (IMEX) scheme for the time discretization of (\ref{ode}):  
\begin{equation}
\label{sch:01}
\frac{f^{n+1}-f^n}{\Delta t} \,= \, \frac{\Q(f^n)\,-\,P(f^n)}{\varepsilon} \,+\, \frac{P(f^{n+1})}{\varepsilon},  
\end{equation}
or
$$
f^{n+1} \,= \, \left[\varepsilon\,{\rm I} \,-\, \Delta t \,\nabla \Q(\M) \right]^{-1} \,\left[ \varepsilon \, f^n  + \Delta t\, (\Q(f^n)\,-\,P(f^n)) \,-\,  \Delta t \,\nabla \Q(\M) \,\M \right].  
$$

This method is easy to implement, since $f^{n+1}$ is linear in the
right hand side of (\ref{sch:01}). 
 For linear problems, we have the following result:
\begin{theorem}
Consider the differential system (\ref{ode}) with $\Q(f) = -\lambda\,f$, where Re$(\lambda)>0$. Set $P(f):= -\nu \,\lambda\,f$ with $\nu\geq 0$. Then, the scheme (\ref{sch:01})   is A-stable and L-stable for $\nu > 1/2$. 
\end{theorem}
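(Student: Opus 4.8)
The plan is to reduce the scheme to a scalar one-step recursion and read off its stability function. Substituting $\Q(f)=-\lambda\,f$ and $P(f)=-\nu\,\lambda\,f$ into (\ref{sch:01}), the non-stiff part becomes $\Q(f^n)-P(f^n)=-(1-\nu)\,\lambda\,f^n$ and the stiff part $P(f^{n+1})=-\nu\,\lambda\,f^{n+1}$. Collecting the implicit term on the left and setting $z=-\lambda\,\Delta t/\varepsilon$ (so that $\mathrm{Re}(z)\le 0$, since $\mathrm{Re}(\lambda)>0$), I would obtain $f^{n+1}=R(z)\,f^n$ with the rational stability function
$$
R(z)\,=\,\frac{1+(1-\nu)\,z}{1-\nu\,z}.
$$
Everything then reduces to analyzing this degree-one M\"obius map on the closed left half-plane $\overline{\C^-}$.

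For A-stability I would first note that the unique pole $z=1/\nu$ lies in the open right half-plane (true as soon as $\nu>0$), so $R$ is holomorphic and bounded on $\overline{\C^-}$. By the maximum-modulus principle it then suffices to bound $|R|$ on the boundary $z=iy$, $y\in\R$. A direct computation gives
$$
|R(iy)|^2\,=\,\frac{1+(1-\nu)^2\,y^2}{1+\nu^2\,y^2},
$$
and the inequality $|R(iy)|^2\le 1$ is equivalent to $(1-\nu)^2\le\nu^2$, i.e. to $1-2\nu\le 0$. Hence $|R(z)|\le 1$ throughout $\overline{\C^-}$ precisely when $\nu\ge 1/2$, with strict contraction $|R(iy)|<1$ for $y\neq0$ when $\nu>1/2$; this is A-stability.

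For the decay property I would examine the stiff limit $z\to\infty$ (equivalently $\varepsilon\to 0$), where $R(z)\to R(\infty)=(1-\nu)/(-\nu)=1-1/\nu$. The requirement that the stiff mode be damped, $|R(\infty)|<1$, is equivalent to $0<1/\nu<2$, i.e. to $\nu>1/2$; the classical value $R(\infty)=0$ is recovered only at $\nu=1$. In the asymptotic-preserving context this damping is exactly what forces $f^{n+1}$ toward the equilibrium $f=\M$ (here $\M=0$) uniformly in $\varepsilon$.

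The main obstacle is conceptual rather than computational: one must fix the precise meaning of ``L-stability'' being invoked. The strict Dahlquist notion demands $R(\infty)=0$, which would single out $\nu=1$ alone; the statement as phrased is correct under the weaker, AP-oriented reading that the stiff-limit amplification satisfies $|R(\infty)|<1$. I would therefore make this definition explicit at the outset, after which the two elementary inequalities $(1-\nu)^2\le\nu^2$ and $|1-1/\nu|<1$ supply the two halves of the claim. The only technical points left to check are those underlying the maximum-modulus step — absence of poles of $R$ in $\overline{\C^-}$ and boundedness at infinity — both of which hold for every $\nu>0$.
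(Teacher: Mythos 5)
Your proof is correct and follows essentially the same route as the paper: both reduce the scheme to the scalar recursion $f^{n+1}=R\,f^n$ with the same stability function (your $R(z)$ with $z=-\lambda\Delta t/\varepsilon$ is exactly the paper's factor $1-\lambda\Delta t/(\varepsilon+\nu\lambda\Delta t)$), and both identify the stiff-limit amplification $1-1/\nu$, whose modulus is below one precisely for $\nu>1/2$. The only differences are ones of completeness: where the paper cites the classical $\theta$-scheme facts, you verify A-stability directly via the maximum-modulus bound $|R(iy)|\leq 1$ on the imaginary axis, and you make explicit the weaker, AP-oriented sense of ``L-stability'' ($|R(\infty)|<1$ rather than the strict Dahlquist requirement $R(\infty)=0$, which holds only at $\nu=1$) that the paper's statement implicitly uses.
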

\begin{proof}
For linear systems,  the scheme simple reads 
$$
f^{n+1} \,=\, \frac{\varepsilon \,  + \, (\nu-1)\,\lambda\,\Delta t}{\varepsilon\, \,+\, \nu\lambda\,\Delta t} \, f^n \,=\, \left(1 - \frac{\lambda \Delta t}{\varepsilon + \nu\,\lambda \Delta t}  \right) \,f^n. 
$$

Observe that $\nu=0$ gives the explicit Euler scheme, which is stable 
only for $\Delta t \leq \varepsilon/\lambda$, whereas for $0\leq \nu\leq 1$, 
it yields  the so-called $\theta$-scheme, which is  $A$-stable for 
$\nu>1/2$. For $\nu=1$ it corresponds to the $A$-stable  implicit Euler 
scheme. Moreover, for $\nu >1$, the scheme is  $A$-stable, that is 
$$
\|f^{n+1}\|_H \,\leq\, \left| 1 - \frac{\lambda \Delta t}{\varepsilon + \nu\,\lambda \Delta t}  \right| \,\|f^{n}\|_H \sim \left(1-\frac1{\nu}\right)
 \|f^{n}\|_H\,  \quad {\hbox {for}} \quad \epsilon \sim 0\,,   
$$
where $ |1-\frac1{\nu}|<1$ for $\nu >1/2$. This is also the condition for
the L-stability \cite{Gear}.
\end{proof}

To improve the numerical accuracy, second order schemes are sometimes more 
desirable. Thus, we propose the following second order IMEX extension. Assume that an approximate solution $f^n$ is known at time $t^n$, we compute a first approximation at time $t^{n+1/2}=t^n+\Delta t/2$ using a first order IMEX scheme  and next apply the trapezoidal rule and the mid-point formula. The scheme reads
\begin{equation}
\label{sch:02}
\left\{
\begin{array}{l}
\displaystyle 2\,\frac{f^{\star}-f^n}{\Delta t} \,= \, \frac{\Q(f^n)\,-\,P(f^n)}{\varepsilon} \,+\, \frac{P(f^\star)}{\varepsilon}.
\\
\,
\\
\displaystyle\frac{f^{n+1}-f^n}{\Delta t} \,= \, \frac{\Q(f^\star)\,-\,P(f^\star)}{\varepsilon} \,+\, \frac{P(f^n) +P(f^{n+1})}{2\,\varepsilon}.
\end{array}\right.
\end{equation}

Note that both (\ref{sch:01}) and (\ref{sch:02}) are AP for prepared initial
data. Let us use (\ref{sch:02}) as the example. Assume $f^n=\M+O(\varepsilon)$.
Then $\Q(f^n)=O(\varepsilon), P(f^n)=O(\varepsilon)$, thus the first step
in (\ref{sch:02}) gives $P(f^*)=O(\varepsilon)$. This further implies
that $\Q(f^*)=O(\varepsilon)$. Applying all these in the second step of
(\ref{sch:02}) gives $P(f^{n+1})=O(\varepsilon)$, thus $f^{n+1}=\M+
O(\varepsilon)$, which is the desired AP
property.

To illustrate the efficiency of  (\ref{sch:01}) and (\ref{sch:02}) in various situations, we consider a simple linear problem with different scales for which only some components rapidly converge to a steady state whereas the remaining part oscillates. We solve
\begin{equation}
\Q(f) \,=\, A \,f, 
\label{Q-eq}
\end{equation}
where
\begin{eqnarray}
\label{A-def}
A = \left( 
\begin{array}{lll}
-1000 & \quad 1& 0
\\
 -1 & -1000 & 0
\\
\quad 0 & \quad 0 & i
\end{array}\right)
\end{eqnarray}
for which the eigenvalues are Sp$(A)\,=\,\{ -1000+\,i,\,-1000-\,i,\, i\}$. 
The first block represents the fast scales whereas the last one is the
 oscillating part.  Indeed, the first components go to zero exponentially fast whereas the third one oscillates with respect to time with a period of $2\pi$. We want to solve accurately the oscillating part with a large time step without resolving small scales. Then, we apply  the first order (\ref{sch:01}) and second order (\ref{sch:02})  schemes by choosing
$$
P(f) = \nu\,A \,f,  
$$
with $\nu\geq 0$. Here we take a large time step $\Delta t=0.3$ and $\nu=2$, which means that $P(f)$ has the same structure of $\Q(f)$ but the eigenvalues are over estimated. Thus, fast scales are under-resolved whereas this times step is a good discretization of the third oscillating component. Therefore, an efficient AP  scheme would give an accurate behavior of the slow oscillating 
scale with large time step with respect to the fast scale. It clearly 
appears in Figure~\ref{fig:01} that the time step is too large to give 
accurate results for the first order scheme (\ref{sch:01}): the solution is 
stable but the oscillation of the third component is damped for this
time step which is too large. This approximation is compared with the one 
obtained with a first order explicit Euler using a times step ten times 
smaller. We also compare the numerical solution of the second order scheme
 (\ref{sch:02}) with the one obtained using a second order explicit Runge-Kutta 
scheme corresponding to $\nu=0$ with a time step three hundred times smaller. In Figure~\ref{fig:01}, we observe the stability and good accuracy of the second order 
scheme (\ref{sch:02}). Note that for the same time step, the explicit Runge-Kutta scheme blows-up! 

\begin{figure}[htbp]
\begin{tabular}{cc}
\includegraphics[width=7.75cm]{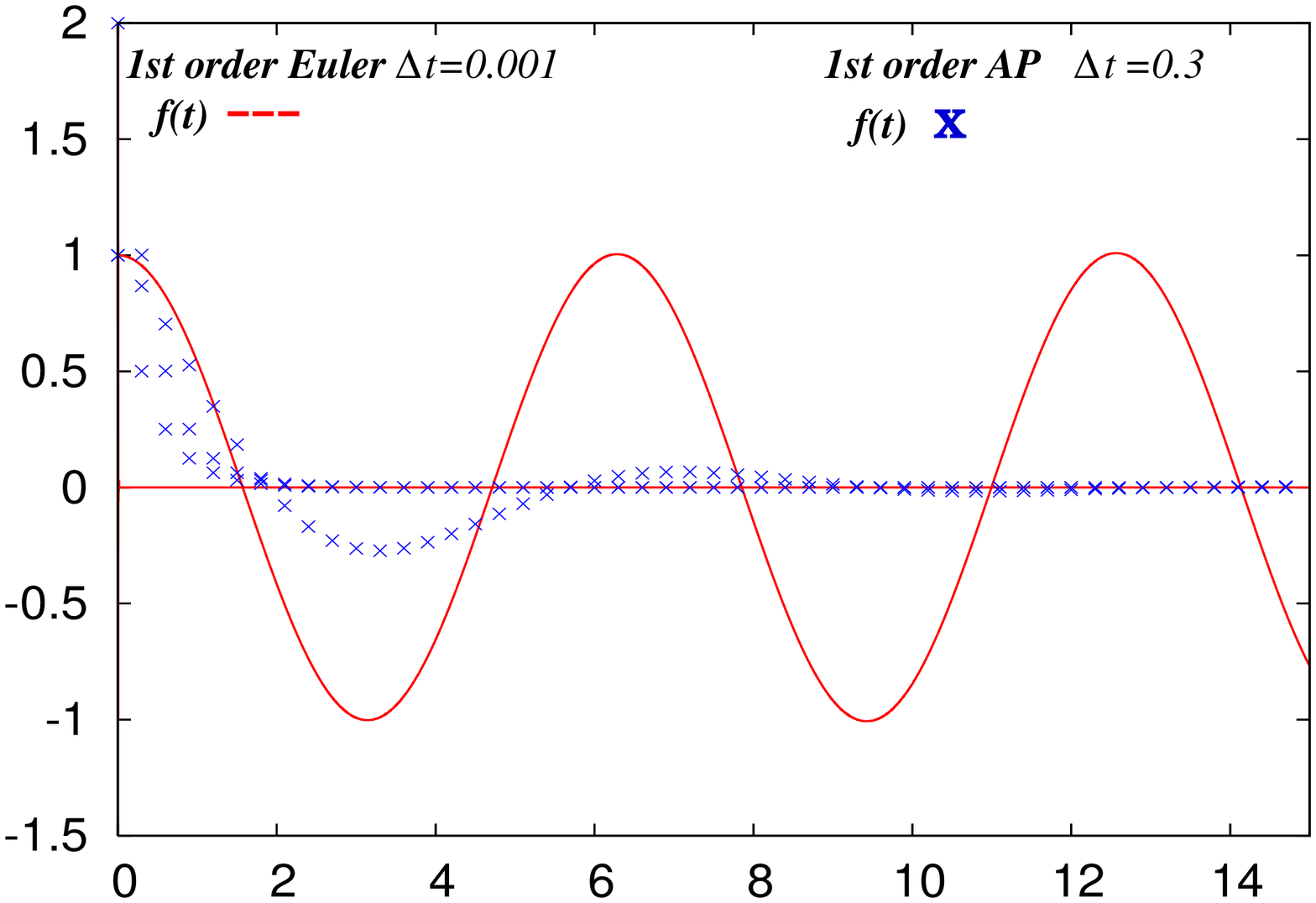}    
&
\includegraphics[width=7.75cm]{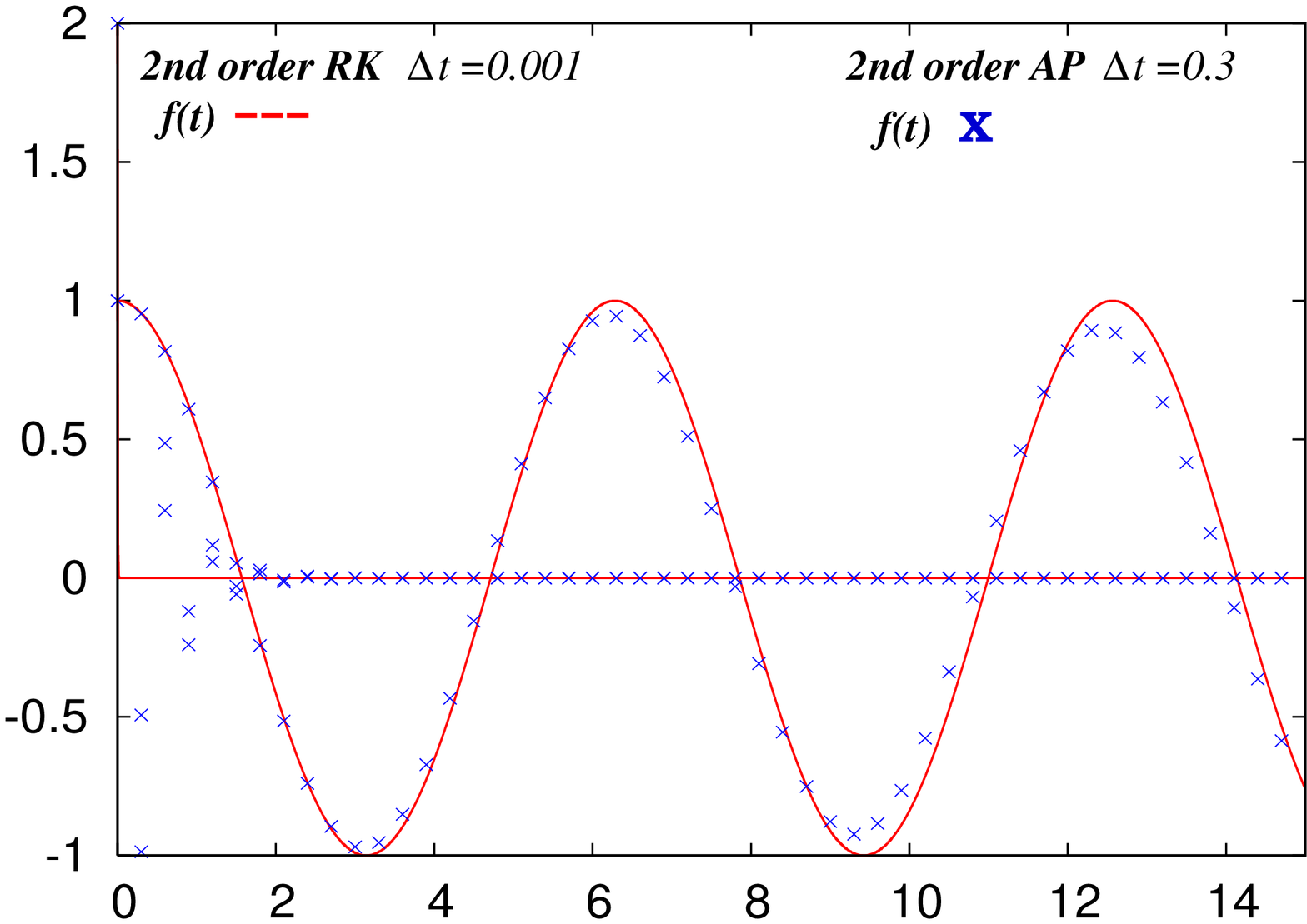}   
\\
(1)&(2)
\end{tabular}
\caption{Comparison of (1) first and (2) second order Asymptotic Preserving and explicit Runge-Kutta schemes for the differential system 
(\ref{Q-eq})-(\ref{A-def}).}
\label{fig:01}
\end{figure}
     
In the following sections we apply this approach
 to the Boltzmann equation and verify its accuracy and  efficiency on several classical problems dealing with fluid, kinetic and multi-scale regimes. 
     
\section{Application to the Boltzmann equation}
\label{sec3}
\setcounter{equation}{0}



We now extend the stiff ODE solver of the previous section to the
Boltzmann eqaution (\ref{eq:B}). To this aim, we rewrite the Boltzmann equation (\ref{eq:B}) in the following form
\begin{equation} 
\label{perturb}
\left\{
\begin{array}{l}
  \displaystyle{\frac{\partial f}{\partial t}  \,+\, v\,\nabla_x f \,=\,  \frac{\Q(f) \,-\, P(f)}{\varepsilon}\,+\,\frac{P(f)}{\varepsilon},  
   \quad x \in \Omega\subset\R^{d_x},\, v\in \R^{d_v},} 
  \\
  \,
  \\
  f(0,x,v)  \,=\, f_{0}(x,v), \quad x\in\Omega,\,v\in\R^{d_v}, 
\end{array}\right.
\end{equation}
where the operator $P$ is a ``well balanced relaxation approximation'' of $\Q(f)$, which means that it satisfies the following (balance law)
\begin{equation*}
 \int_{{\R}^d}P(f) \, \phi(v)\,dv = 0, \qquad
 \phi(v)=1,v,|v|^2,
 \end{equation*}
and preserves  the steady state {\it i.e.} $P(\M_{\rho,u,T})=0$ where $\M_{\rho,u,T}$ is the  Maxwellian distribution associated to $\rho$, $u$ and $T$ given by (\ref{rut}). Moreover, it is a relaxation operator in velocity
\begin{equation}
P(f) = \beta \, \left[\M_{\rho,u,T}(v) - f(v) \right]. 
\label{BGK}
\end{equation}
  
For instance, $P(f)$ can be computed from  an expansion of the Boltzmann 
operator with respect to $\M{\rho,u,T}$:
\begin{eqnarray*}
\Q(f)  &\simeq& \Q(\M_{\rho,u,T}) \,+\, \nabla \Q(\M_{\rho,u,T})\,\left[\M_{\rho,u,T}-f\right]. 
\end{eqnarray*} 
Thus, we choose $\beta>0$ as an upper bound of the operator $\nabla \Q(\M_{\rho,u,T})$.
Then $P(f)$ given by (\ref{BGK}) is just the BGK collisional operator
\cite{BGK}.
 
Since the convection term in (\ref{perturb}) is not stiff, we will
treat it explicitly. For source terms on the right hand side of 
(\ref{perturb}) will be handled using the ODE solver in the previous
section. For example, if the first order scheme (\ref{sch:01}) is used,
then we have 
\begin{equation} 
\label{sch:perturb}
\left\{
\begin{array}{l}
  \displaystyle{\frac{f^{n+1}-f^n}{\Delta t }  + v\cdot\nabla_x f^n \,=\, \frac{\Q(f^n) \,-\, P(f^n)}{\varepsilon}\,+\,\frac{P(f^{n+1})}{\varepsilon},} 
  \\
  \,
  \\
  f^0(x,v)  = f_{0}(x,v)\,.
\end{array}\right.
\end{equation}
 Using the relaxation structure of $P(f)$, it can be written as
\begin{eqnarray*}
\label{AP-1}
f^{n+1} &=& \frac{\varepsilon}{\varepsilon+\beta\Delta t} \left[f^n - \Delta t  \,v\,\nabla_x f^n\right] \,+\, \Delta t\,\frac{\Q(f^n) \,-\, P(f^n)}{\varepsilon+\beta \Delta t} 
\\
\,
\\
&&+ \frac{\beta \Delta t}{\varepsilon+\beta \Delta t}\,\M^{n+1},
 \end{eqnarray*}
where $\M^{n+1}$ is the Maxwellian distribution computed from $f^{n+1}$.

Although (\ref{AP-1}) appears  nonlinearly implicit, it can be  
computed explicitly. Specifically, upon multiplying (\ref{AP-1}) by $\phi(v)$ 
defined
in (\ref{cons:Q}), and use the conservation property of
$\Q$ and $P$ and the definition of $\M$ in  (\ref{rut}), 
one gets
$$
U^{n+1} = \frac{\varepsilon}{\varepsilon+\beta \Delta t} \int \phi
(f^n-\Delta t\,  v \cdot \nabla_x f^n) \, dv 
+ \frac{\beta\Delta t}{\varepsilon+\beta \Delta t} U^{n+1}\,,
$$
or simply
$$
U^{n+1} =  \int \phi
(f^n-\Delta t v \cdot \nabla_x f^n)  \, dv \,.
$$
Thus $U^{n+1}$ can be obtained explicitly, which defines $\M^{n+1}$. Now
$f^{n+1}$ can be obtained from (\ref{AP-1} explicitly. 
In summary, although (\ref{sch:perturb}) is nonlinearly
implicit, it can be solved {\it explicitly}, thus satisfies the second condition of an AP scheme.

  We define the macroscopic quantity $U$ by  $U:=(\rho,\rho\,u,T)$ computed from $f$. Clearly, the scheme (\ref{sch:perturb}) satisfies the following properties
    
\begin{proposition}
Consider the numerical solution given by (\ref{sch:perturb}). Then,
\begin{itemize}
\item[$(i)$]  If $\varepsilon \to 0$  and $f^n=\M^{n}+O(\varepsilon)$,
 then the scheme  (\ref{sch:perturb}) is asymptotic preserving, that is $f^{n+1}=\M^{n+1}
+O(\varepsilon)$, thus the scheme is AP to the Euler limit in the sence
that, when $\varepsilon\to 0$, the (moments of the) scheme becomes a
consistent discretization of the Euler system (\ref{eq:Euler}). 
\item[$(ii)$] For $\varepsilon\ll 1$ and there exists a constant $C>0$ such that
\begin{equation}
\label{hyp:00}
\left\|\frac{f^{n+1}-f^n}{\Delta t}\right\| \,+\,\left\|\frac{U^{n+1}-U^n}{\Delta t}\right\| \leq C, 
\end{equation}
then the  scheme (\ref{sch:perturb})  asymptotically becomes
 a first order in time approximation of the compressible Navier-Stokes (\ref{eq:CNS}).
\end{itemize}
\end{proposition}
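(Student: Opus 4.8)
\emph{Proof strategy.} The two assertions are of very different natures, so I would treat them separately. For part $(i)$ I would argue directly from the explicit update derived just above the statement. Fixing $\Delta t$ and letting $\varepsilon\to 0$, the prefactors behave as $\varepsilon/(\varepsilon+\beta\Delta t)\to 0$, $\Delta t/(\varepsilon+\beta\Delta t)\to 1/\beta$ and $\beta\Delta t/(\varepsilon+\beta\Delta t)\to 1$. Since the data are prepared, $f^n=\M^n+O(\varepsilon)$ gives $\Q(f^n)=\Q(\M^n)+\nabla\Q(\M^n)(f^n-\M^n)+O(\varepsilon^2)=O(\varepsilon)$ and $P(f^n)=\beta(\M^n-f^n)=O(\varepsilon)$, so $\Q(f^n)-P(f^n)=O(\varepsilon)$. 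Plugging these into the explicit update collapses it to $f^{n+1}=\M^{n+1}+O(\varepsilon)$, which is the stated AP property. For the Euler consistency I would use the reduced macroscopic update $U^{n+1}=\int\phi\,(f^n-\Delta t\,v\cdot\nabla_x f^n)\,dv$ obtained before the statement: substituting $f^n=\M^n+O(\varepsilon)$ and using the Maxwellian moments in (\ref{rut}) turns $\int\phi\,v\,\M^n\,dv$ into exactly the Euler fluxes, so $(U^{n+1}-U^n)/\Delta t+\nabla_x\cdot F(U^n)=O(\varepsilon)$, i.e. a consistent (forward-Euler-in-time) discretization of (\ref{eq:Euler}).

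Part $(ii)$ is the substantial one and I would approach it as a discrete Chapman--Enskog expansion. Writing $h^n:=f^n-\M^n$ and using $P(f)=\beta(\M-f)$, I would first rearrange (\ref{sch:perturb}) into
\begin{equation*}
\Q(f^n)=\beta\,(h^{n+1}-h^n)+\varepsilon\Big(\frac{f^{n+1}-f^n}{\Delta t}+v\cdot\nabla_x f^n\Big),
\end{equation*}
where I use the identity $P(f^n)-P(f^{n+1})=\beta[(f^{n+1}-f^n)-(\M^{n+1}-\M^n)]=\beta(h^{n+1}-h^n)$. Expanding the collision operator around the local Maxwellian, $\Q(f^n)=\mathcal{L}_{\M^n}h^n+O(\|h^n\|^2)$ with $\mathcal{L}_{\M^n}:=\nabla\Q(\M^n)$, and invoking part $(i)$ to guarantee $h^n=O(\varepsilon)$, this becomes a linear equation for the non-equilibrium part $h^n$.

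Next I would extract the leading correction. The hypothesis (\ref{hyp:00}) is exactly what makes the discrete time differences $(f^{n+1}-f^n)/\Delta t$ and $(\M^{n+1}-\M^n)/\Delta t$ bounded uniformly in $\varepsilon$, so that $h^{n+1}-h^n$ and $v\cdot\nabla_x h^n$ contribute only at higher order; keeping the $O(\varepsilon)$ terms gives
\begin{equation*}
\mathcal{L}_{\M^n}h^n=\varepsilon\Big(\frac{\M^{n+1}-\M^n}{\Delta t}+v\cdot\nabla_x\M^n\Big)+O(\varepsilon\,\Delta t)+O(\varepsilon^2).
\end{equation*}
Here I would check solvability through the Fredholm alternative of property $(ii)$ in the introduction: $\mathcal{L}_{\M^n}$ has kernel and cokernel $\mathrm{span}\{1,v,|v|^2\}$, and the right-hand side is orthogonal to these invariants up to the Euler residual, which part $(i)$ already controls to $O(\Delta t)+O(\varepsilon)$. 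Since $(\M^{n+1}-\M^n)/\Delta t$ is, to first order in $\Delta t$, the Euler-constrained time derivative of $\M$, the resulting $h^n=\varepsilon\,\mathcal{L}_{\M^n}^{-1}(\cdots)$ is precisely the classical first-order Chapman--Enskog correction.

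Finally I would close the moment system. Inserting $f^n=\M^n+\varepsilon\,g^n$ with $g^n:=h^n/\varepsilon$ into the macroscopic update $(U^{n+1}-U^n)/\Delta t+\nabla_x\cdot\int v\,\phi\,f^n\,dv=0$, the equilibrium part reproduces the Euler fluxes while the correction $\varepsilon\int v\,\phi\,g^n\,dv$ yields the viscous stress $\varepsilon\nabla_x\cdot[\mu\,\sigma(u)]$ and heat flux $\varepsilon\nabla_x\cdot(\mu\,\sigma(u)\,u+\kappa\,\nabla_x T)$ of (\ref{eq:CNS}), with $\mu$ and $\kappa$ fixed by $\mathcal{L}_{\M}=\nabla\Q(\M)$ as in \cite{bgl:91}. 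A point worth stressing is that the BGK penalty entered only through $\beta(h^{n+1}-h^n)=O(\varepsilon\,\Delta t)$ and therefore cancels at this order, so it is the \emph{true} Boltzmann transport coefficients that survive. I expect the main obstacle to be precisely the rigorous bookkeeping of the increment-difference terms $\beta(h^{n+1}-h^n)$ and $(\varepsilon/\Delta t)(h^{n+1}-h^n)$: showing that, under (\ref{hyp:00}), they sit at order $O(\varepsilon\,\Delta t)$ rather than $O(\varepsilon)$ is what separates a genuine Navier--Stokes consistency (with $O(\Delta t)$ truncation) from a spurious $O(\varepsilon)$ modification of the transport coefficients, and it is here that the regularity assumption does its real work.
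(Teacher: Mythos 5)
Your proposal follows essentially the same route as the paper: part $(i)$ via the explicit update formula and moment integration, and part $(ii)$ via a discrete Chapman--Enskog expansion $f^n=\M^n+\varepsilon g^n$ (your $h^n=\varepsilon g^n$), linearization of $\Q$ about $\M^n$, inversion of $\LL_{\M^n}$ through the Fredholm alternative --- which the paper implements concretely by constructing the projection $\Pi_{\M^n}$ onto $\NN(\LL_{\M^n})$ and Taylor-expanding $\M^{n+1}$ to get $({\rm I}-\Pi_{\M^n})\bigl((\M^{n+1}-\M^n)/\Delta t\bigr)=O(\Delta t)$ --- followed by substitution of $g^n$ back into the moment equations. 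The ``main obstacle'' you flag, controlling the penalty increment $\beta(h^{n+1}-h^n)$, is precisely the point where the paper is also only formal: it absorbs $\beta(\rho^n)g^n-\beta(\rho^{n+1})g^{n+1}$ into the $O(\varepsilon)+O(\Delta t)$ remainder under (\ref{hyp:00}) without further argument, so your attempt is faithful to (and no less rigorous than) the paper's proof.
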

\begin{proof}
We easily first check that for $\varepsilon \to 0$ and $f^n=\M^n$, we get $f^{n+1}=\M^{n+1}$. Therefore, we multiply (\ref{sch:perturb}) by $(1,v, |v|^2/2)$ and integrate with respect to $v$, which yields that $U^n$ is given by a time explicit scheme of the Euler system (\ref{eq:Euler}).    

Now let us prove (ii). We apply the classical Chapman-Enskog expansion: 
\begin{equation}
\label{decomp}
f^n \,=\, \M ^n \,+\, \varepsilon \, g^n   
\end{equation}  
and integrate (\ref{sch:perturb}) with respect to $v\in\R^{d_v}$. By
 using  the conservation properties of the Boltzmann operator (\ref{cons:Q}) and of the well-balanced approximation $P(f)$, 
\begin{equation}
\label{moment}
\frac{U^{n+1}-U^{n}}{\Delta t} \,+\,\nabla_v\cdot \int_{\R^{d_v}}\left(\begin{array}{l} 1\\ v \\\displaystyle  \frac{|v|^2}{2} \end{array}\right)\, v \, (\M^n \,+\,\varepsilon g^n)  dv \,=\, 0. 
\end{equation} 
For $\varepsilon g=0$, this is the compressible Euler equations 
(\ref{eq:Euler}).
Thus, a consistent approximation of the compressible Navier-Stokes is  directly related to a consistent approximation of  $g^n$.  Inserting decomposition (\ref{decomp}) into the scheme (\ref{sch:perturb}) gives
\begin{eqnarray*}
 \displaystyle \frac{\M^{n+1}-\M^n}{\Delta t }  &+& v\,\nabla_x \M^n \,+\, \varepsilon\left(\frac{g^{n+1}-g^n}{\Delta t }  + v\,\nabla_x g^n \right) 
\\
&=& \frac{\Q(\M^n+\varepsilon g^n)}{\varepsilon}\,-\, \left[\beta(\rho^n)g^n-\beta(\rho^{n+1})g^{n+1}\right],
\end{eqnarray*}
Since $\Q$ is a bilinear and $\Q(\M)=0$, one has
$$
\Q(\M +\varepsilon \,g ) \,=\, \Q(\M) \,+ \,\varepsilon \,\LL_{\M}(g)  + \varepsilon^2 \;\Q(g),
$$
where $\LL_{\M}$ is the linearized collision operator with respect to $\M$. Thus, we get
\begin{eqnarray}
\nonumber
&& \frac{\M^{n+1}-\M^n}{\Delta t }  \,+\, \left[\beta(\rho^n)g^n-\beta(\rho^{n+1})g^{n+1}\right]
\\
\nonumber
&+&\varepsilon\left[\frac{g^{n+1}-g^n}{\Delta t }  + v\,\nabla_x g^n - \Q(g^n) \right] 
\\
&&=\LL_{\M}(g^n) \,-\, v\,\nabla_x \M^n\,,
\label{tmp:0}
\end{eqnarray}
It is well known that $\LL_{\M}$ is a non-positive self-adjoint operator on $L^2_{\M}$ defined by the set 
$$
L^2_\M:=\{\varphi :  \quad \varphi\,\M^{-1/2}\in L^2(\R^{d_v})\}
$$ 
and that its kernel is $\NN(\LL_{\M})={\rm Span}\{\M, v\,\M, |v|^2\M\}$. Let $\Pi_{\M}$ be the orthogonal projection in $L^2_\M$ onto $\NN(\LL_\M)$. After easy computations in the orthogonal basis, one finds that 
$$
\Pi_{\M}(\psi) = \frac{\M}{\rho} \left[m_0 + \frac{v-u}{T} m_1  +\left(  \frac{|v-u|^2}{2T} - \frac{d_v}{2}  \right) m_2   \right]
$$
where
$$
m_0 = \int_{\R^{d_v}} \psi \,dv, \quad m_1 = \int_{\R^{d_v}} (v-u)\psi\, dv,  \quad m_2 = \int_{\R^{d_v}} \left(\frac{|v-u|^2}{2T} - \frac{d_v}{2}\right)\,\psi \,dv.
$$
It is easy to verify that $\Pi_{\M^n}(\M^{n})=\M^n$ and
$$
\Pi_{\M^n}(g^n) \,=\,\Pi_{\M^n}(g^{n+1}) \,=\,  \Pi_{\M^n}(Q(g^{n})) \,=\,\Pi_{\M^n}(\LL_{\M^n}(g^n))\,=\, 0.
$$
Then applying the orthogonal projection ${\rm I} - \Pi_{\M^n}$  to (\ref{tmp:0}), it yields 
\begin{eqnarray*}
&& \displaystyle \left({\rm I}-\Pi_{\M^n}\right)\left(\frac{\M^{n+1}-\M^n}{\Delta t }\right)
+ \left(\beta(\rho^n)g^n-\beta(\rho^{n+1})g^{n+1}\right)
\\
&&+\,\varepsilon\left[ \frac{g^{n+1}-g^n}{\Delta t } + \left({\rm I}-\Pi_{\M^n}\right)(v\,\nabla_x g^n) -  \Q(g^n)\right] 
\\
&=& \LL_{\M}(g^n) -\left({\rm I}-\Pi_{\M^n}\right)(v\,\nabla_x \M^n).
\end{eqnarray*}
Finally, it remains to estimate 
$$
\left({\rm I}-\Pi_{\M^n}\right)\left(\frac{\M^{n+1}-\M^n}{\Delta t }\right).
$$ 
Using a Taylor expansion we find that
$$
\M^{n+1} = \M^{n}\left[1+ \frac{\rho^{n+1}-\rho^n}{\rho^n} + \frac{v-u^n}{T^n}\left(u^{n+1}-u^{n}\right) + \left(\frac{|v-u^n|^2}{2 T^n}- \frac{d}{2}\right) \frac{T^{n+1}-T^{n}}{T^n}\right] \,+\, O(\Delta t^2)
$$
and by definition of $\Pi_\M$
\begin{eqnarray*}
&&\Pi_{\M^n}(\M^{n+1}) \,\,= 
\\
&&\M^n\,\left(1 \,+\, \frac{\rho^{n+1}-\rho^n}{\rho^n} \,+\, \frac{v-u^n}{T^n}\left(u^{n+1}-u^{n}\right) \,+\, \left(\frac{|v-u^n|^2}{2 T^n}- \frac{d}{2}\right) \frac{T^{n+1}-T^{n}}{T^n} \right] 
\\
&+&  \M^n  \,\left(\frac{|v-u^n|^2}{2 T^n}\,-\, \frac{d}{2}\right) \left[ \frac{T^{n+1}-T^n}{\rho^n\,T^n}(\rho^{n+1}-\rho^n) \,+\, \frac{\rho^{n+1}}{d\rho^n\,T^n}(u^{n+1}-u^n)^2\right]
\\
&+&  \M^n\, \frac{v-u^n}{T^n}\, \frac{\rho^{n+1}-\rho^n}{\rho^n}\, \left(u^{n+1}-u^{n}\right) + O(\Delta t^2).
\end{eqnarray*}
Thus, under the assumption (\ref{hyp:00}), we have
$$
\left({\rm I}-\Pi_{\M^n}\right)\left(\frac{\M^{n+1}-\M^n}{\Delta t }\right) = O(\Delta t)
$$ 
and the residual distribution function is given by 
$$
g^n \,=\,  \LL_{\M^n}^{-1}\big(\left({\rm I}-\Pi_{\M^n}\right)\left(v\cdot\nabla_x \M^n\right)\big) + O(\varepsilon)\,+\,  O(\Delta t).
$$
Now, substituting this latter expression in (\ref{moment}), we get 
\begin{eqnarray*}
\frac{U^{n+1}-U^{n}}{\Delta t} \,+\,\nabla_x\cdot F(U) &=& -\varepsilon\, \nabla_x\cdot \int_{\R^{d_v}}\left(\begin{array}{l} v\\ v\otimes v \\\displaystyle  v\frac{|v|^2}{2} \end{array}\right) \, \LL_{\M^n}^{-1}\big(\left({\rm Id}-\Pi_{\M^n}\right)\left(v\cdot\nabla_x \M^n\right)\big)  dv 
\\
&&+ O(\varepsilon\Delta t+\varepsilon^2),
\end{eqnarray*} 
where 
$$
F(U) = \left(\begin{array}{l}
\rho\,u
\\
\rho\,u\otimes u + p\,{\rm I}
\\
(E\,+\, p)\,u 
\end{array}\right).
$$
To complete the proof, it remains to compute the term in $O(\var)$. An easy computation first gives
$$
\left({\rm I}-\Pi_{\M^n}\right)\left(v\cdot\nabla_x \M^n\right) \,=\,\left[B \, \left( \nabla u + (\nabla u)^T - \frac{d}{2}\nabla\cdot u\,{\rm I}\right)  \,+\, A\,\frac{\nabla T}{\sqrt{T}}\right]\,M(v),   
$$
with 
$$
A = \left(\frac{|v-u|^2}{2T} - \frac{d+2}{2} \right) \frac{v-u}{\sqrt{T}}, \quad B = \frac{1}{2}\left(\frac{(v-u)\otimes(v-u)}{2T} - \frac{|v-u|^2}{dT}{\rm I} \right).  
$$
Therefore, it yields
\begin{eqnarray*}
\LL_{\M^n}^{-1}\big(\left({\rm I}-\Pi_{\M^n}\right)\left(v\cdot\nabla_x \M^n\right)\big)  &=& \LL_{\M^n}^{-1}(B\,M)  \left( \nabla u + (\nabla u)^T - \frac{d}{2}\nabla\cdot u\,{\rm I}\right)
\\
&&+ \LL_{\M^n}^{-1}(A\,M)\,\frac{\nabla T}{\sqrt{T}}. 
\end{eqnarray*}
Substituting this expression in (\ref{moment}), we get a consistent time discretization scheme to the compressible Navier-Stokes system where the term of order of $\varepsilon$ is given by
\begin{equation*}
\varepsilon\, \nabla_x\cdot\left(
\begin{array}{l}
0
\\
\mu_\varepsilon\,\sigma(u_\varepsilon)
\\
\mu_\varepsilon\sigma(u_\varepsilon)\,u \,+\, \kappa_\varepsilon\,\nabla_x T_\varepsilon
\end{array}\right)
\end{equation*}
with
$$
\sigma(u) \,=\, \nabla_x u \,+\,\left(\nabla_x u\right)^T  \,-\, \frac{2}{d_v} \,\nabla_x\cdot u\,{\rm I}
$$
while the viscosity $\mu_\varepsilon =\mu(T_\varepsilon)$ and the thermal conductivity $\kappa_\varepsilon =\kappa(T_\varepsilon)$ are defined according the linearized Boltzmann operator with respect to the local Maxwellian \cite{bgl:91}.
\end{proof}

\begin{remark}
To capture the Navier-Stokes approximation that has $O(\varepsilon)$ 
viscosity and heat conductivity, one needs the mesh size and $c \Delta t$
to be $o(\varepsilon)$ ($c$ is a characteristic speed). Thus conclusion (ii) in
the above proposition shows that the scheme is {\it consistent} to
the Navier-Stokes equations provided that the viscous terms are resolved,
while to capturing the Euler limit one can use mesh size and $c\Delta t$
much larger than $\varepsilon$, in the usual sense of asymptotic-preserving.
\end{remark}

\section{Numerical tests}

In this section we perform several numerical simulations for the
Boltzmann equation in different asymptotic regimes
in order to check the performance (in stability and
accuracy) of our methods. 
We have implemented the first order  (\ref{sch:01})  and second order 
(\ref{sch:02}) scheme for the approximation of the Boltzmann equation. 
Here,  the Boltzmann collision operator is discretized by a 
deterministic  method \cite{FiPa:02,FiRu:FBE:03,FiPa:03,FMP}, which gives a 
spectrally accurate approximation. A classical second 
order finite volume scheme with slope limiters is applied for the transport 
operator.
  
\subsection{Approximation of smooth solutions.}
This test is used to evaluate the order of accuracy of our new methods.
More precisely, we want to show that our methods (\ref{sch:01}) and
(\ref{sch:02}) are uniformly accurate with respect to the parameter 
$\varepsilon>0$. We consider the Boltzmann equation (\ref{eq:B}) 
in $1\,d_x\times 2\,d_v$. We take a smooth initial data
$$
f_0(x,v)\,\,=\,\, \frac{\rho_0(x)}{2\pi\,T_0(x)}\, \exp\left(-\frac{|v|^2}{2\,T_0(x)}\right), \quad (x,v)\in [-L,L]\times \R^2,
$$
with $\rho_0(x)\,=\, (11 \,-\,9 \tanh(x))/10$, $T_0(x)\,=\, (3 - \tanh(x))/4$, $L=1$ and assume specular reflection boundary conditions in $x$. Numerical solutions are computed from different phase space meshes :  the number of point in space is $n_x=50, 100$, $200$,...,$1600$ and the number of  points in velocity is $n_v^2$ with $n_v=8$,...,$64$ (for which the spectral accuracy is achieved), the time step is computed such that the CFL condition for the transport is satisfied $\Delta t\leq \Delta x/v_{\rm max}$, where $\Delta x$ is the space step and $v_{\rm max}=7$ is the truncation of the velocity domain. Then different values of $\varepsilon$ are considered starting from the fully kinetic regime $\varepsilon=1$, up to the fluid limit $\varepsilon=10^{-5}$ corresponding to the solution of the Euler system (\ref{eq:Euler}). The final time is $T_{\rm max}=1$ such that the solution is smooth for the different regimes.

 An estimation of the relative error in $L^p$
norm is given by
$$
\varepsilon_{2\,h} = \max_{t\in(0,T)}\left(\frac{\| f_h(t) - f_{2\,h}(t) \|_p}{\|f_{0}\|_p}\right), \qquad 1\leq p \leq +\infty,
$$
where $f_h$ represents the approximation computed from a grid of order $h$.
The numerical scheme is said to be $k$-th order if $\varepsilon_{2\,h}  \leq C \,h^k$, for all $0 < h \ll 1$.
\begin{figure}[htbp]
\begin{tabular}{cc}
\includegraphics[width=7.75cm]{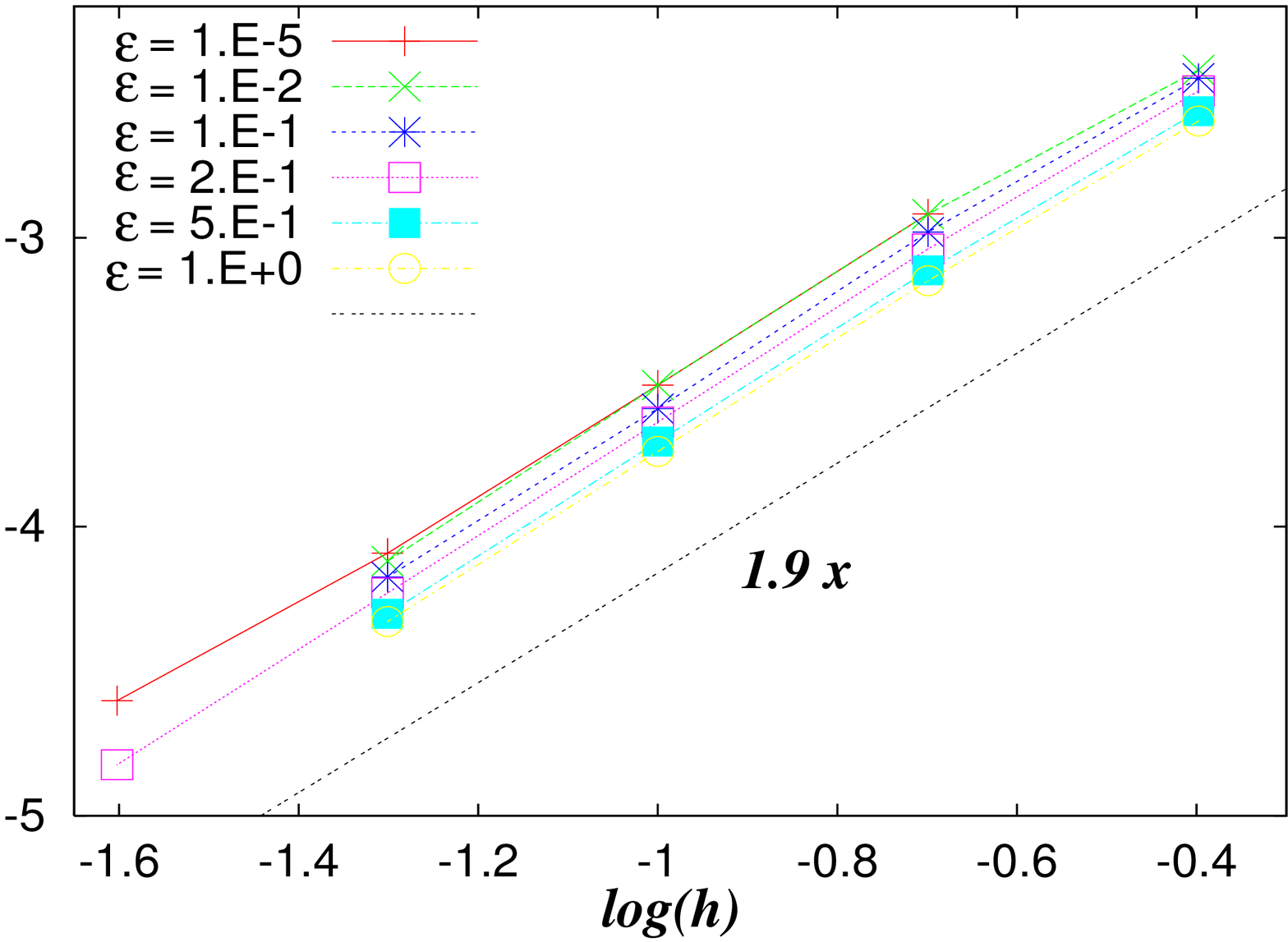}    
&
\includegraphics[width=7.75cm]{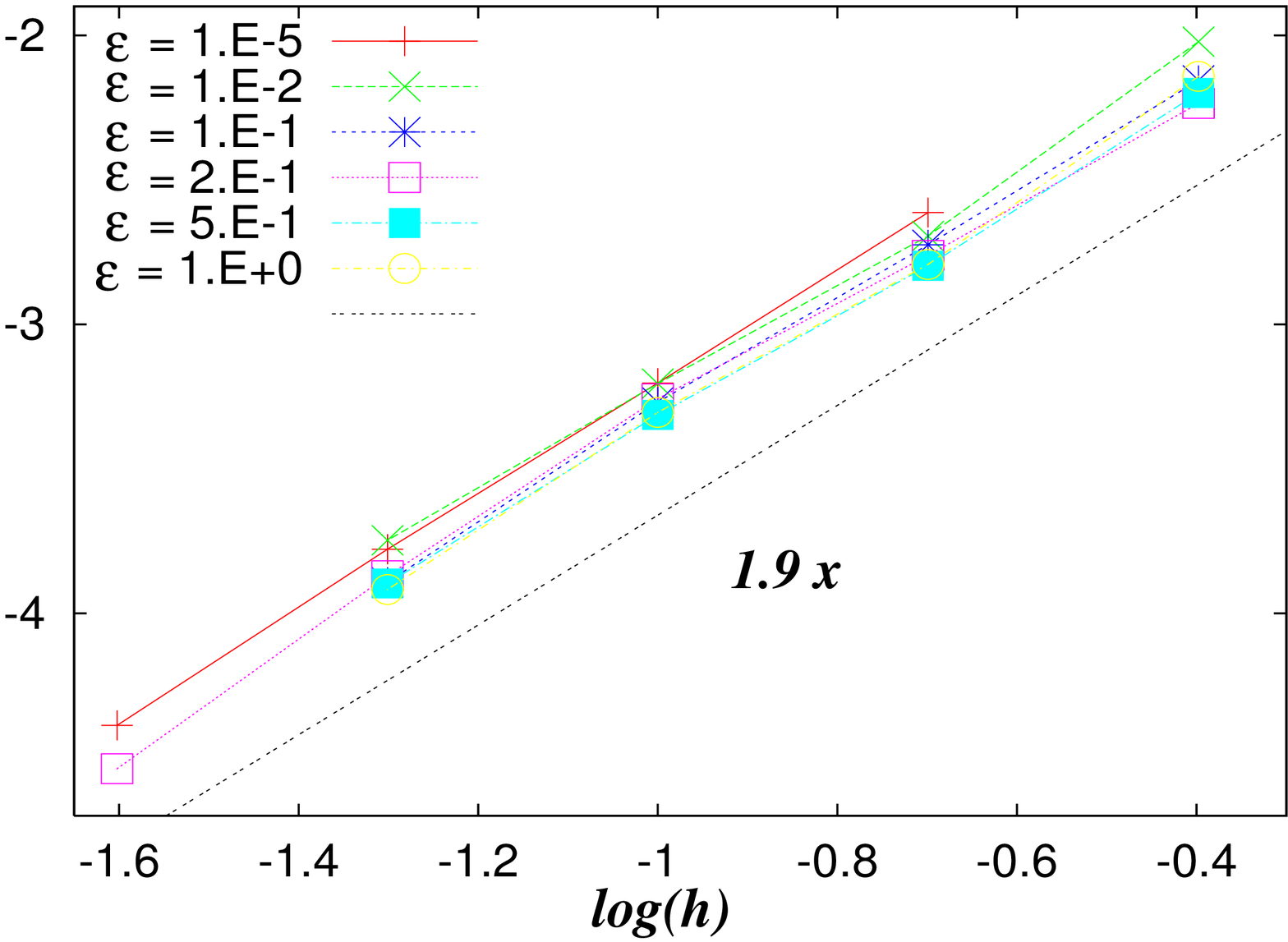}   
\\
(1)&(2)
\end{tabular}
\caption{The $L^1$ and $L^\infty$ errors of the second order method
(\ref{sch:02}) for different values  of the Knudsen number $\varepsilon\,=\,10^{-5},\dots,1$.}
\label{fig:02}
\end{figure}

 In Figure \ref{fig:02}, the $L^1$ and $L^\infty$ errors of the second order
method (\ref{sch:02}) are
 presented.  They show a uniformly second order convergence rate
(an estimation of the slope is $1.9$) in space and time (the velocity
discretization is spectrally accuracy in $v$ thus does not contribute
much to the errors). The time step  is not constrained by the value of 
$\varepsilon$, showing a uniform stability in time.

\subsection{The Sod tube problem}
This test deals with the numerical solution to the $1d_x\times 2d_v$ Boltzmann equation for Maxwellian molecules ($\gamma =0$). We present numerical simulations for one dimensional Riemann problem and compute an approximation for different Knudsen numbers, from rarefied regime  to the fluid regime.

Here, the initial data corresponding to the Boltzmann equations are given by 
the Maxwellian distributions computed from the following macroscopic quantities
\begin{eqnarray*}
\left\{
\begin{array}{ll}
(\rho_l, u_l, T_l) = (1,0,1)\,, &\textrm{ if } 0 \leq x \leq 0.5\,,
\\ 
\\
(\rho_r, u_r, T_r) = (0.125,0,0.25)\,, &\textrm{ if } 0.5 < x \leq 1\,.
\end{array} \right.
\end{eqnarray*}

We perform several computations for $\varepsilon\,=\, 1, 10^{-1}$, $10^{-2}$,...,$10^{-4}$. In Figure \ref{fig:03-2}, we only show the results obtained in the kinetic regime ($10^{-2}$) using a spectral scheme for the discretization of the collision operator \cite{FMP} (with $n_v=32^2$  and a truncation of the velocity domain $v_{\rm max}=7$) and second order explicit Runge-Kutta  and 
second order method (\ref{sch:02}) for the time discretization with a time step $\Delta t=0.005$ satisfying the CFL condition for the transport part (with $n_x=100$). For such a value of $\varepsilon$, the problem is not stiff and this test is only performed to compare the accuracy of our second order scheme (\ref{sch:02}) with the classical (second order) Runge-Kutta method. We present several snapshots of the density, mean velocity, temperature and heat flux 
$$
\QQ(t,x) \,:=\, \frac{1}{\varepsilon}\,\int_{\R^{d_v}} (v-u_\varepsilon)\,|v-u_\varepsilon|^2 \,f_\varepsilon(t,x,v)dv 
$$
at different time $t=0.10$ and $0.20$. Both results agree well with only $n_x=100$  in the space domain and $n_v=32$ for the velocity space. Thus, in the kinetic regime our second order method (\ref{sch:02}) gives the same accuracy as a second order fully explicit scheme without any additional computational effort.


\begin{figure}[htbp]
\begin{tabular}{cc}
\includegraphics[width=7.75cm]{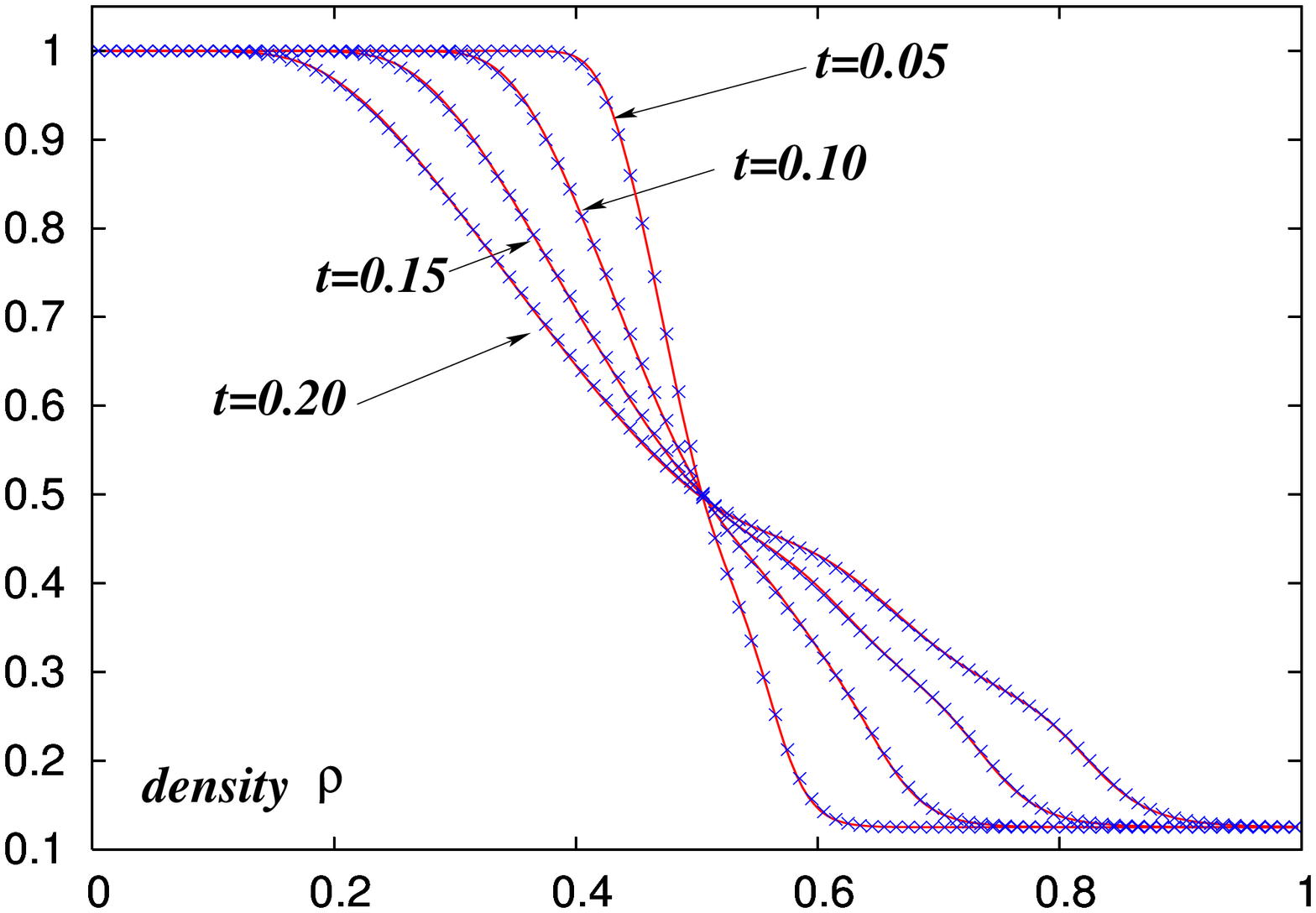}    
&
\includegraphics[width=7.75cm]{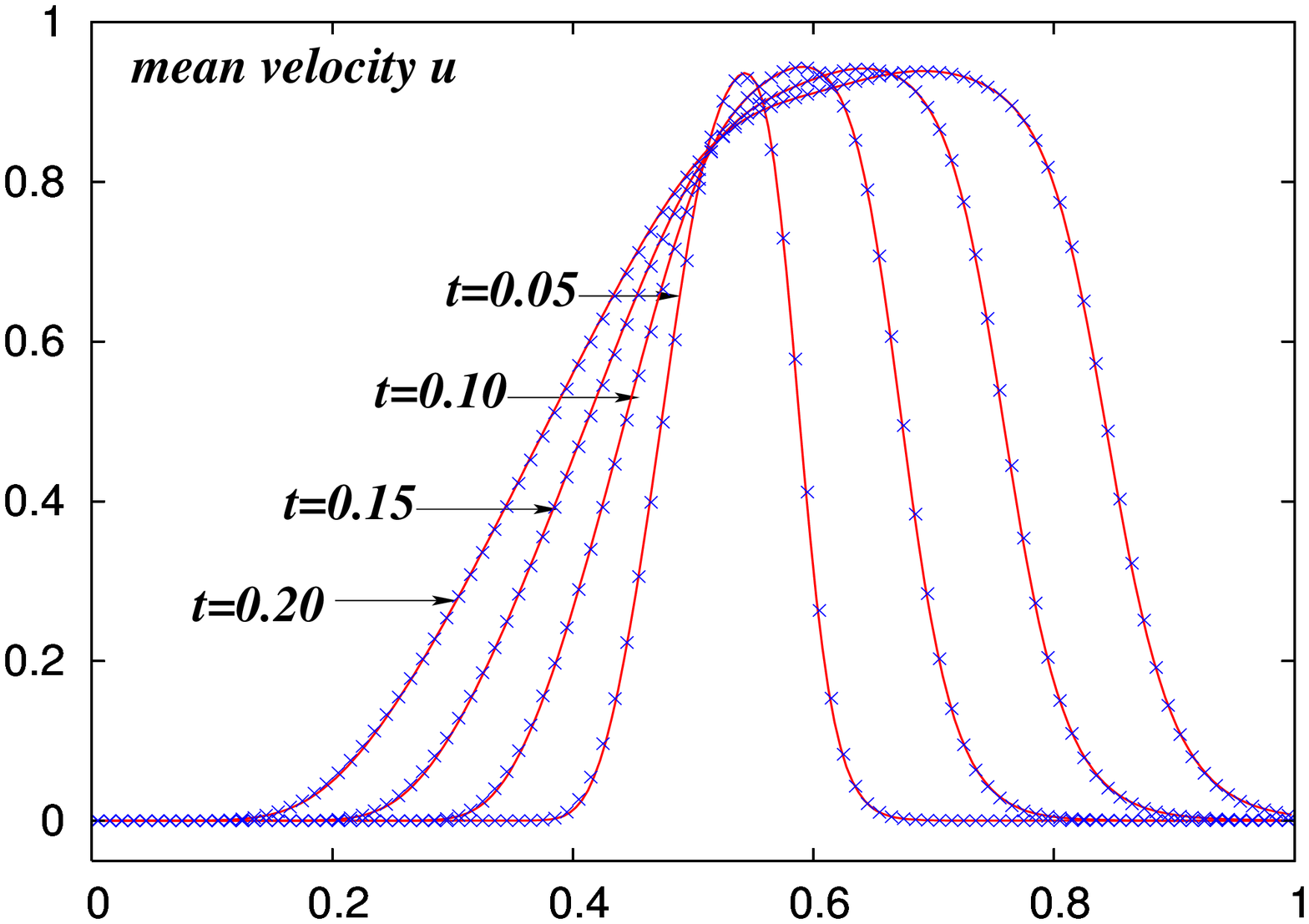}    
\\
(1)&(2)
\\
\includegraphics[width=7.75cm]{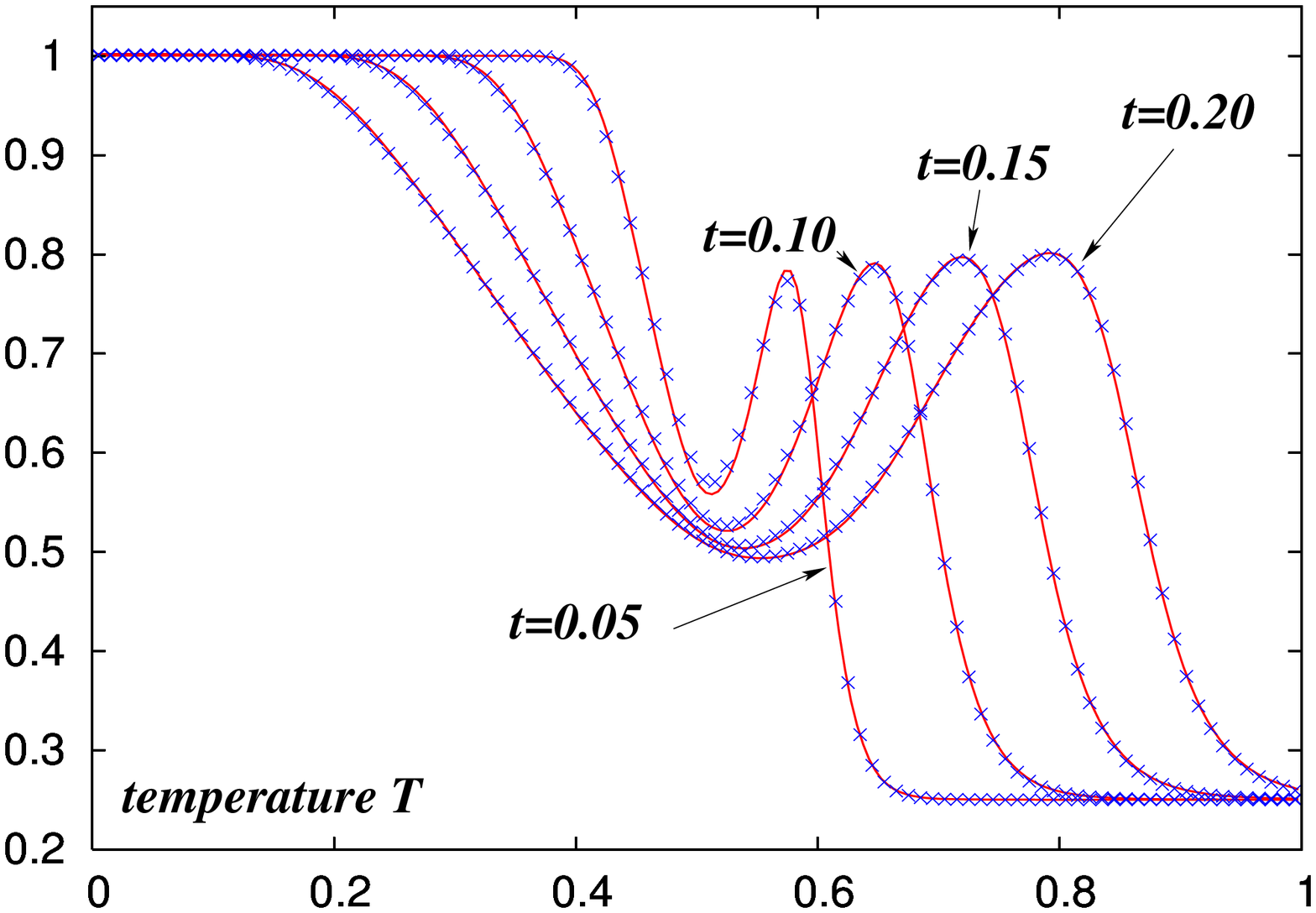}   
&
\includegraphics[width=7.75cm]{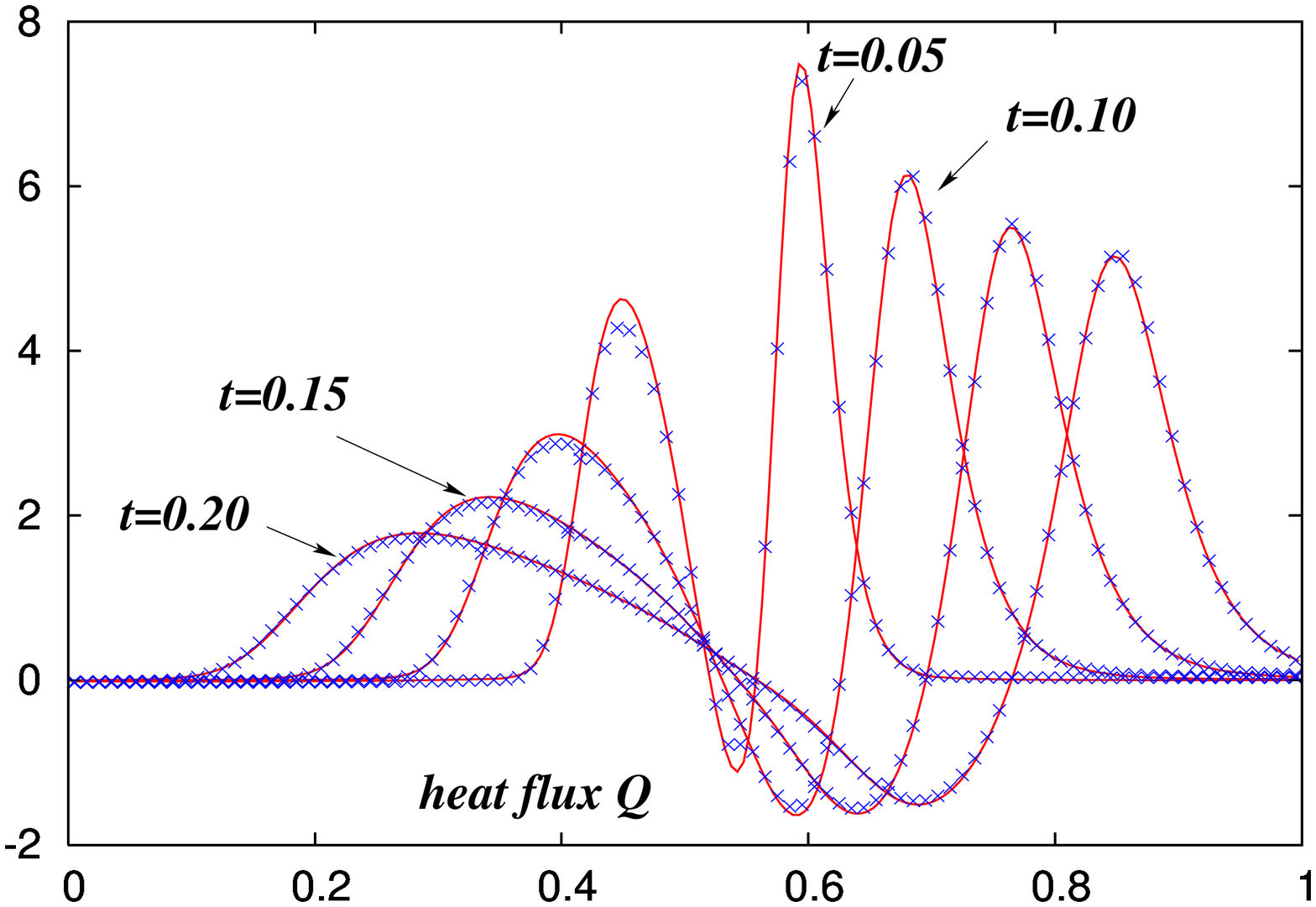}   
\\
(3)&(4)
\end{tabular}
\caption{Sod tube problem ($\varepsilon=10^{-2}$), dots ({\tt x}) represent the numerical solution obtained with our second order method (\ref{sch:02}) and lines with the Runge-Kutta method: evolution of  (1) the density $\rho$, (2) mean velocity $u$, (3) temperature $T$ and  (4) heat flux $\QQ$ at time $t=0.05$, $0.1$, $0.15$ and $0.2$.}
\label{fig:03-2}
\end{figure}

Now, we investigate the cases of small values of $\varepsilon$ for which an 
explicit scheme requires the time step to be of order $O(\var)$. In order to 
evaluate the accuracy of our method (\ref{sch:02}) in the Navier-Stokes regime (for small $\varepsilon\ll 1$ but not negligible), we  compared  the numerical solution  for $\varepsilon=10^{-3}$ with one obtained with a small time step $\Delta t =O(\varepsilon)$ (for  which the computation is still feasible). Note that a direct comparison with the numerical solution to the compressible Navier-Stokes system (\ref{eq:CNS}) is difficult since  the viscosity $\mu_\varepsilon =\mu(T_\varepsilon)$ and the thermal conductivity $\kappa_\varepsilon =\kappa(T_\varepsilon)$ are not explicitly known. Therefore, in  Figure~\ref{fig:04-1}, we report the numerical results for  $\varepsilon=10^{-3}$ and propose a comparison between the numerical solution obtained with the scheme (\ref{sch:02}) and the one obtained with a second order explicit Runge-Kutta method. In this case, the behavior of macroscopic quantities (density, mean velocity, temperature and heat flux) agree very well even it the time step is at least ten times larger with our method (\ref{sch:01}) or (\ref{sch:02}).

\begin{figure}[htbp]
\begin{tabular}{cc}
\includegraphics[width=7.75cm]{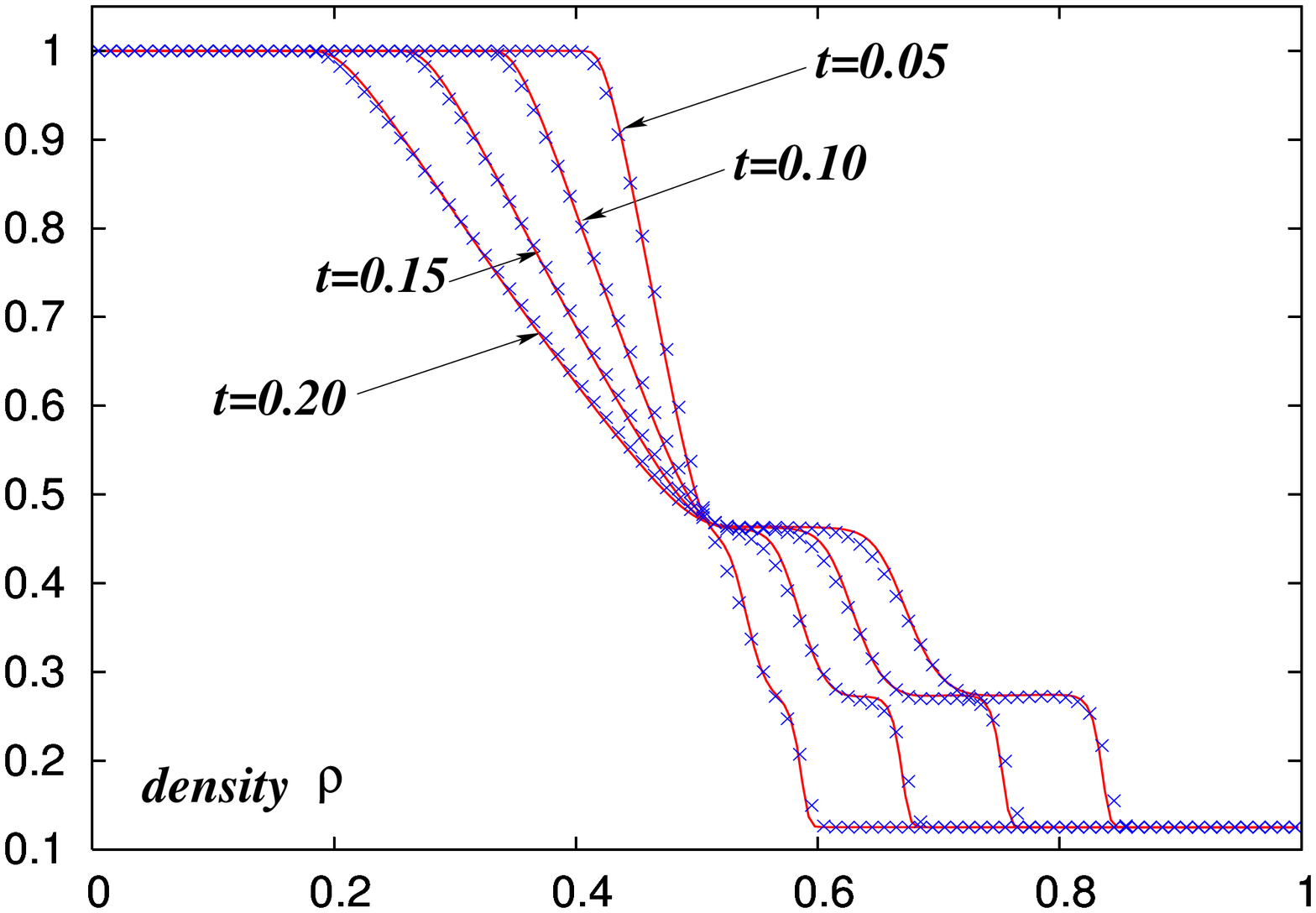}    
&
\includegraphics[width=7.75cm]{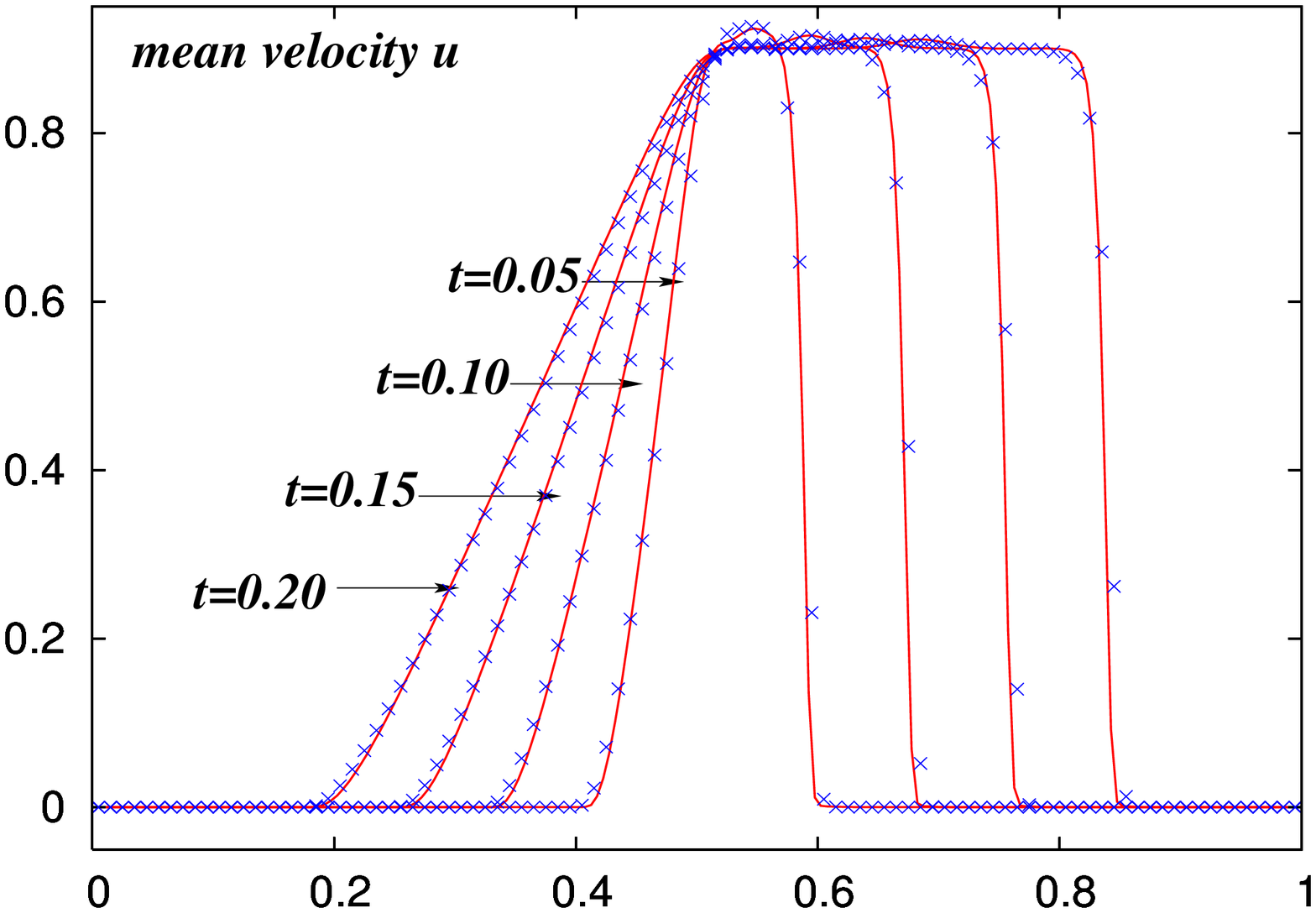}    
\\
(1)&(2)
\\
\includegraphics[width=7.75cm]{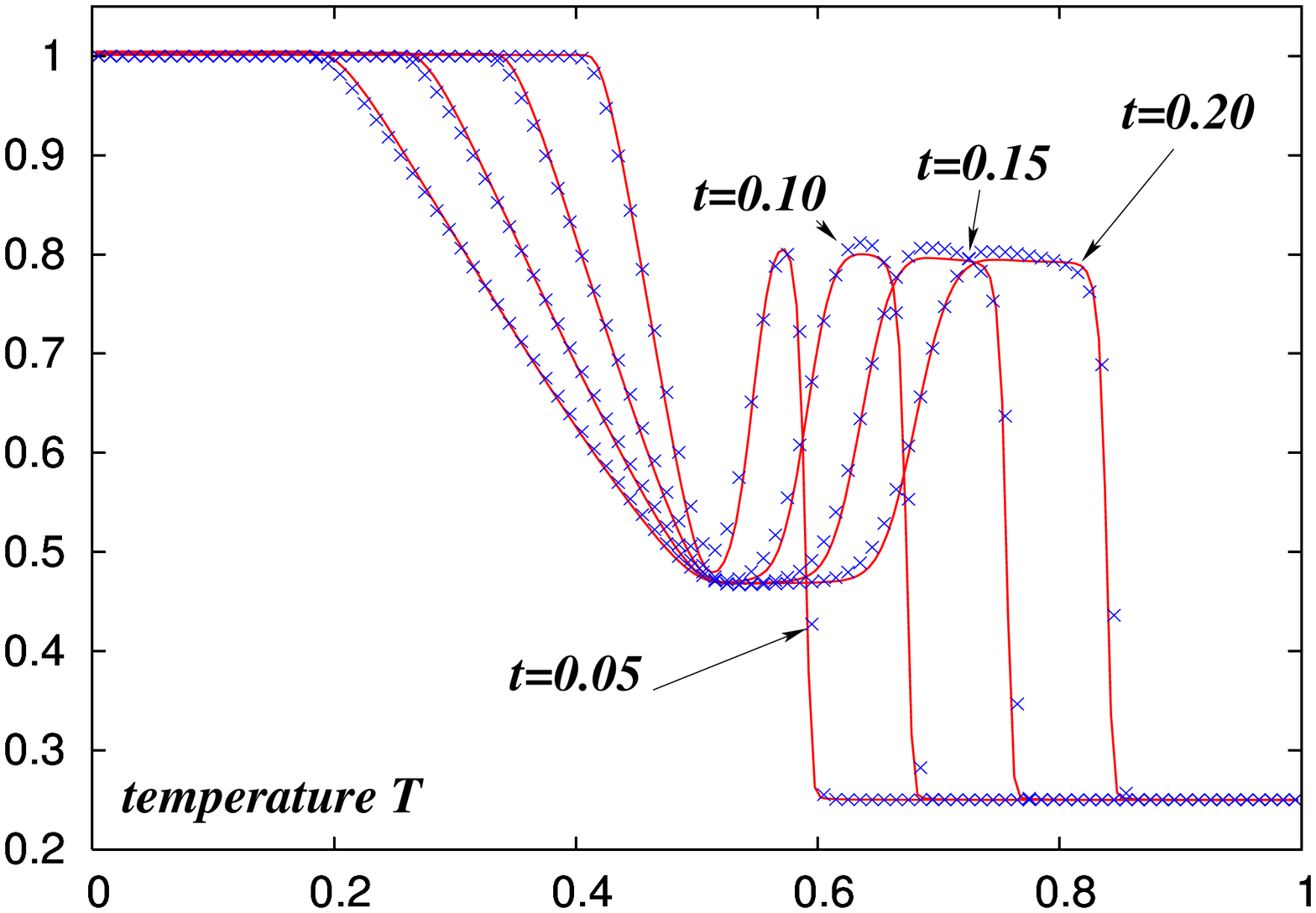}   
&
\includegraphics[width=7.75cm]{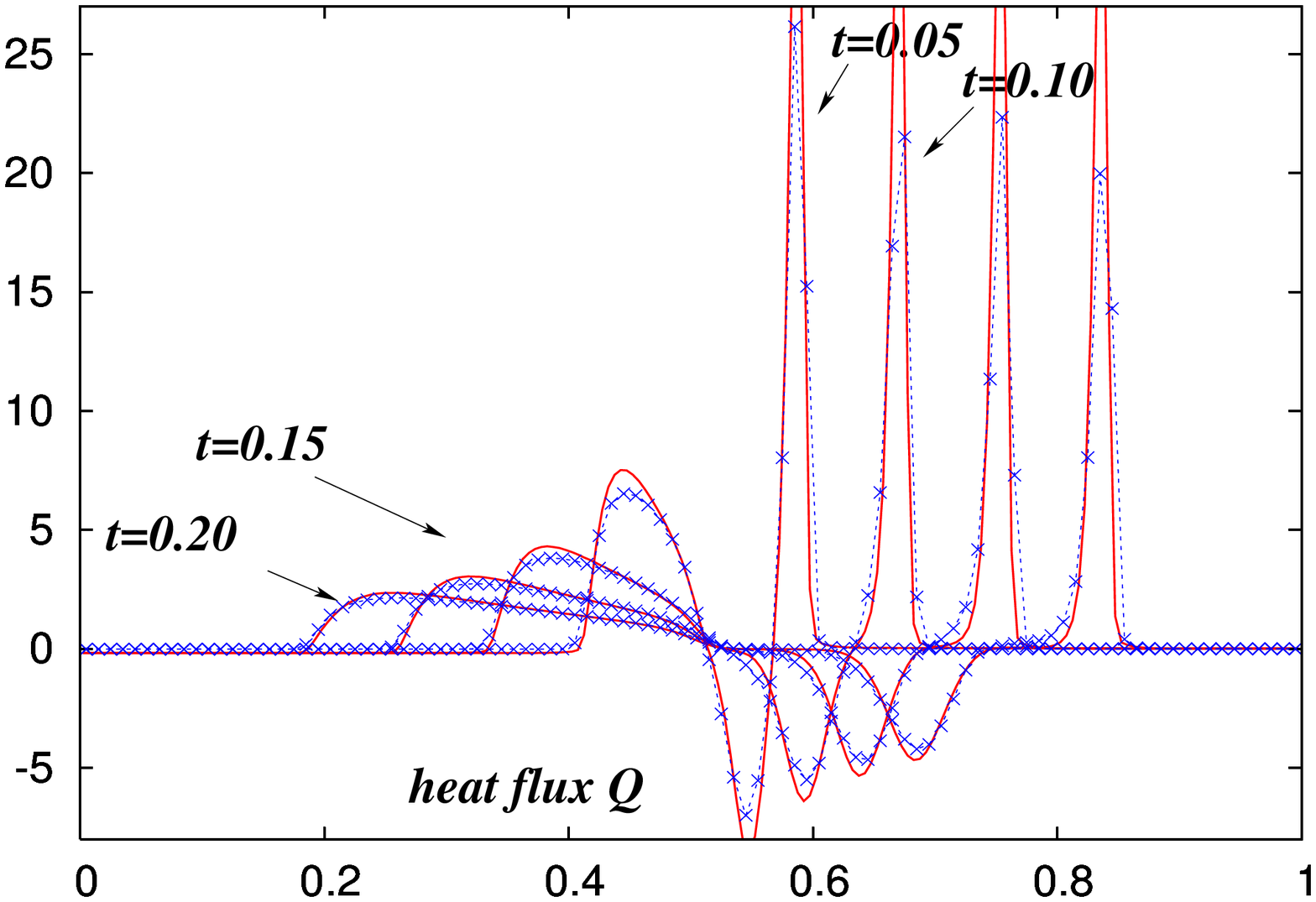}   
\\
(3)&(4)
\end{tabular}
\caption{Sod tube problem ($\varepsilon=10^{-3}$), dots ({\tt x}) represent the numerical solution obtained with our second order method (\ref{sch:02}) and lines with the Runge-Kutta method: evolution of  (1) the density $\rho$, (2) mean velocity $u$, (3) temperature $T$ and  (4) heat flux $\QQ$ at time $t=0.05$, $0.1$, $0.15$ and $0.2$.}
\label{fig:04-1}
\end{figure}

Finally in  Figure~\ref{fig:04-2}, we compare the numerical solution of the Boltzmann equation (\ref{eq:B}) with the numerical solution to the compressible Navier-Stokes system derived from the BGK model since the viscosity and heat conductivity are in that case explicitly known \cite{BLM}.  To approximate
 the compressible Navier-Stokes system, we apply a second order Lax-Friedrich scheme using a large number of points ($n_x=1000$) whereas we only used $n_x=100,$ and $200$ points in space and $n_v^2=32^2$ points in velocity for the approximation of the kinetic equation (\ref{eq:B}). In this problem, the density, mean velocity and temperature are relatively close to the one obtained with the approximation of the Navier-Stokes system. Even the qualitative behavior of the heat flux  agrees well with  the heat flux  corresponding to the compressible Navier-Stokes  system $\kappa_\varepsilon\,\nabla_x T_\varepsilon$, with $\kappa_\varepsilon=\rho_\varepsilon\,T_\varepsilon$ (see Figure~\ref{fig:04-2}), yet some differences can be observed, which means that the use of BGK models to derive macroscopic models has a strong influence on the heat flux.


\begin{figure}[htbp]
\begin{tabular}{cc}
\includegraphics[width=7.75cm]{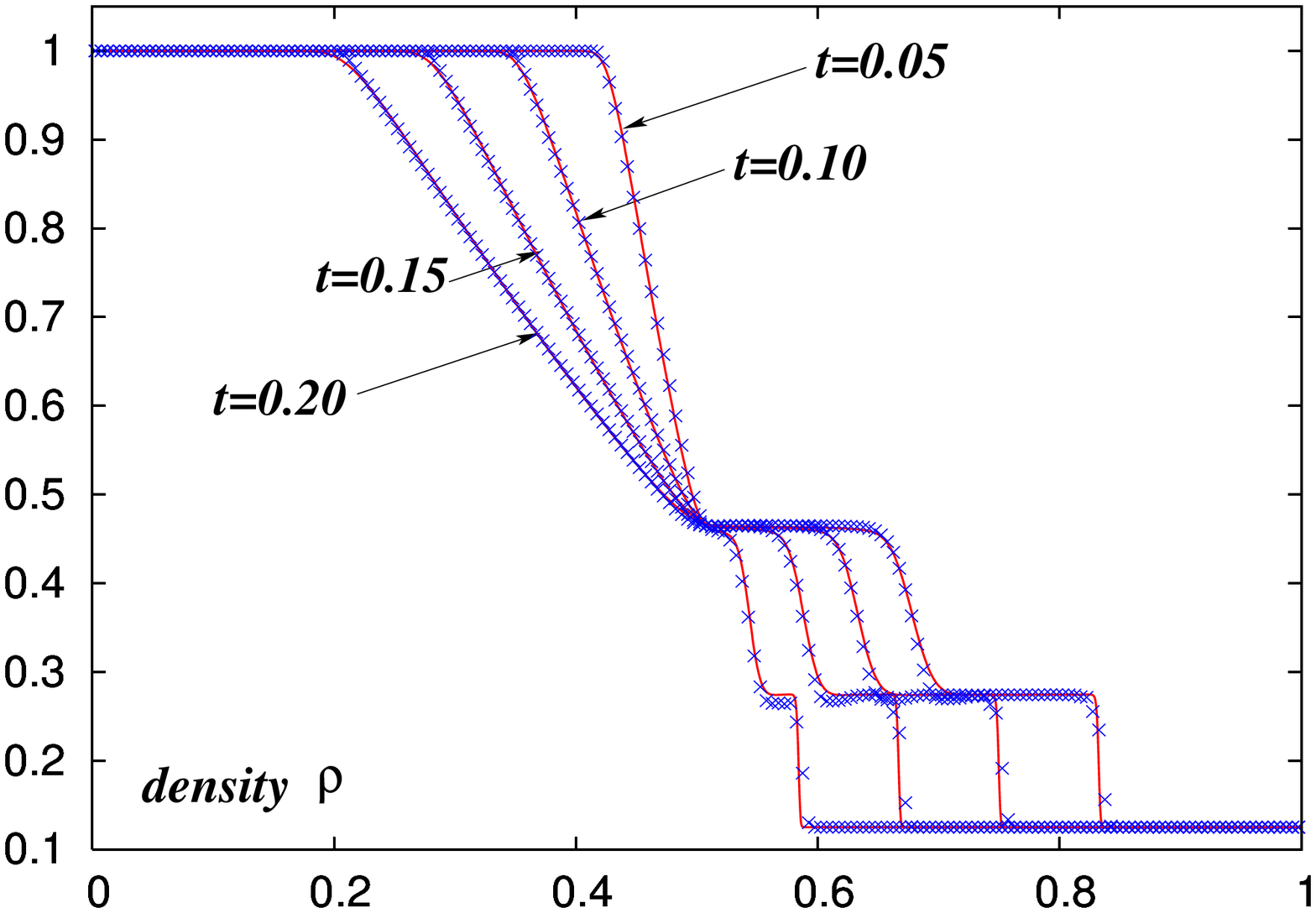}    
&
\includegraphics[width=7.75cm]{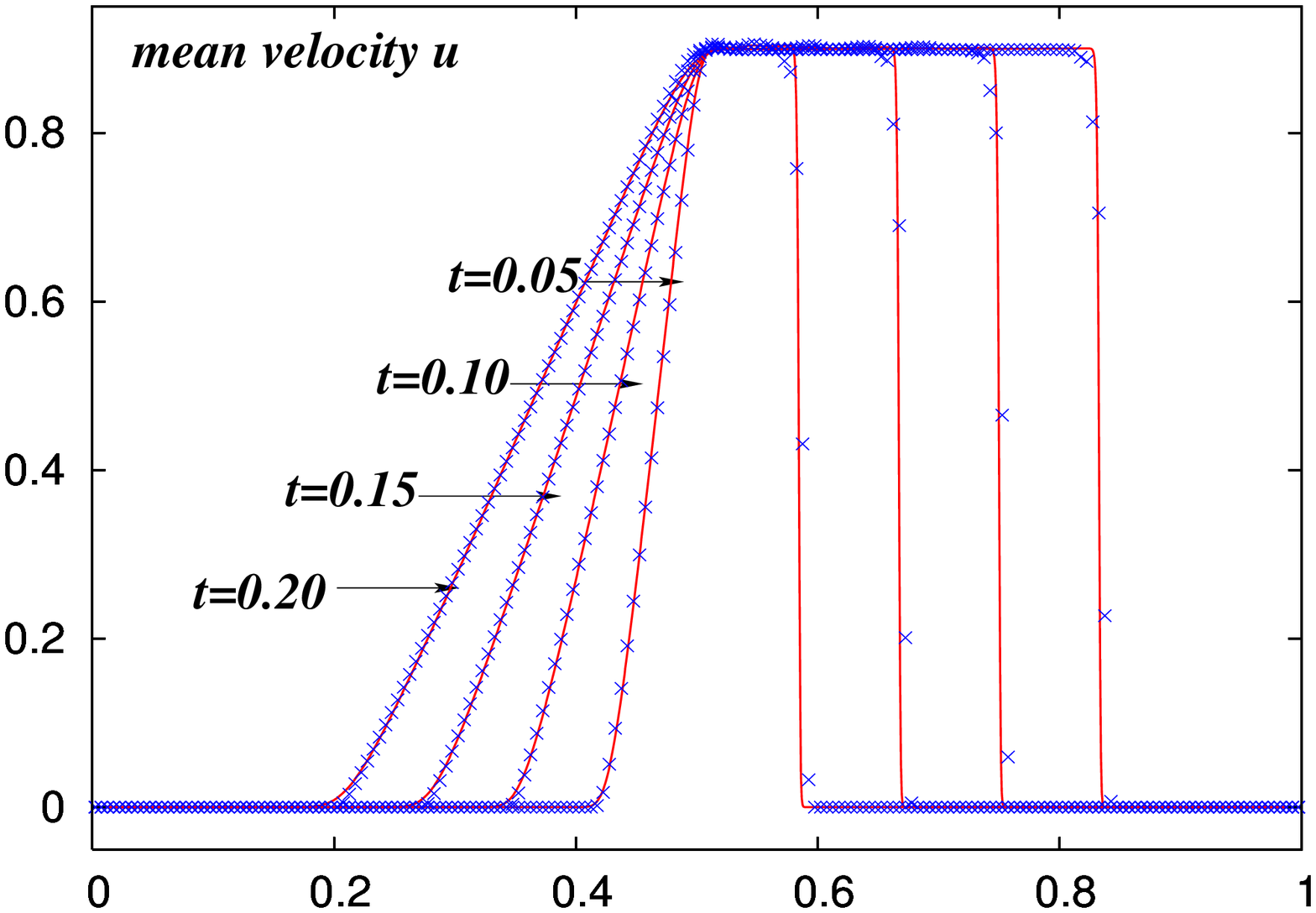}    
\\
(1)&(2)
\\
\includegraphics[width=7.75cm]{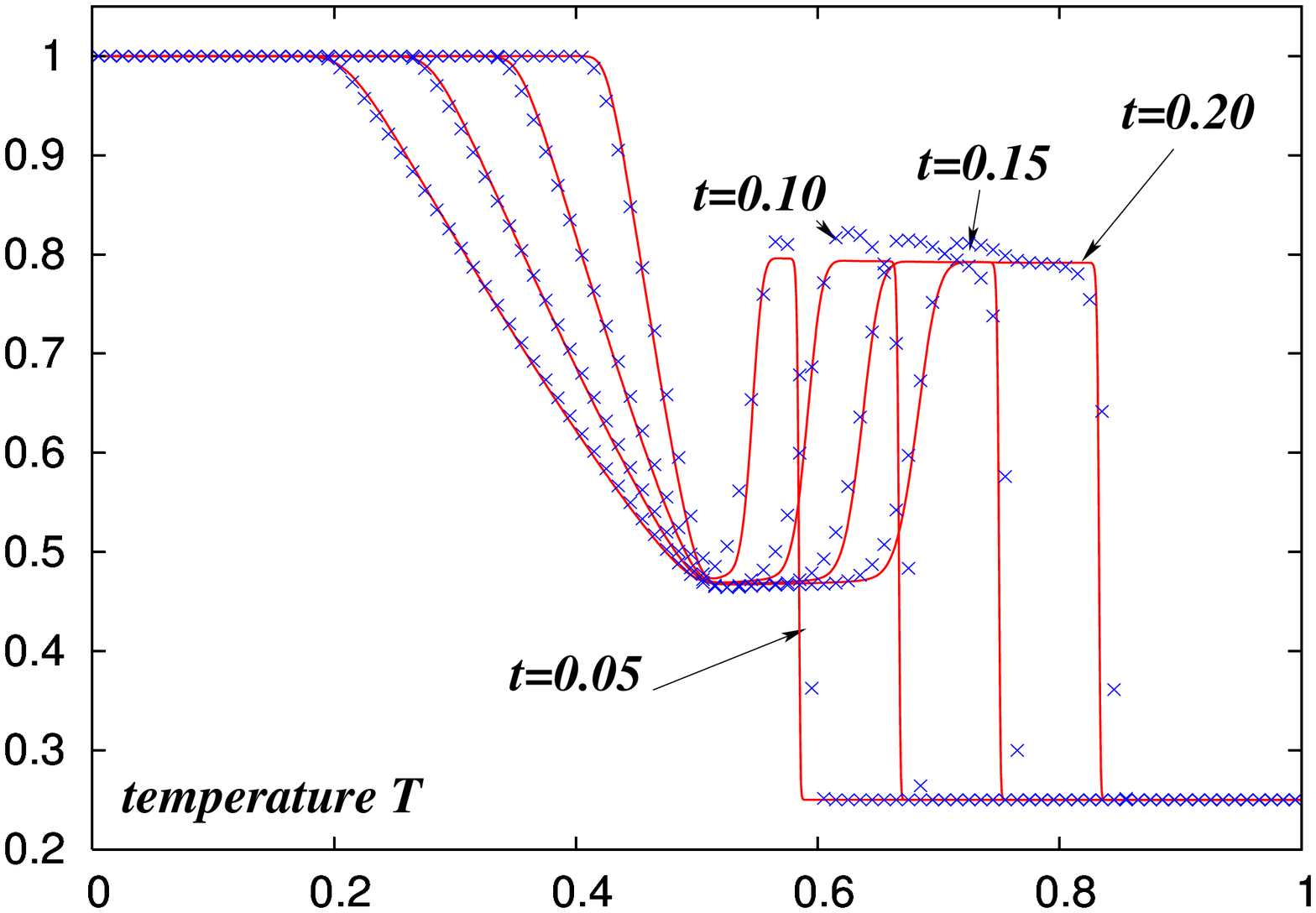}   
&
\includegraphics[width=7.75cm]{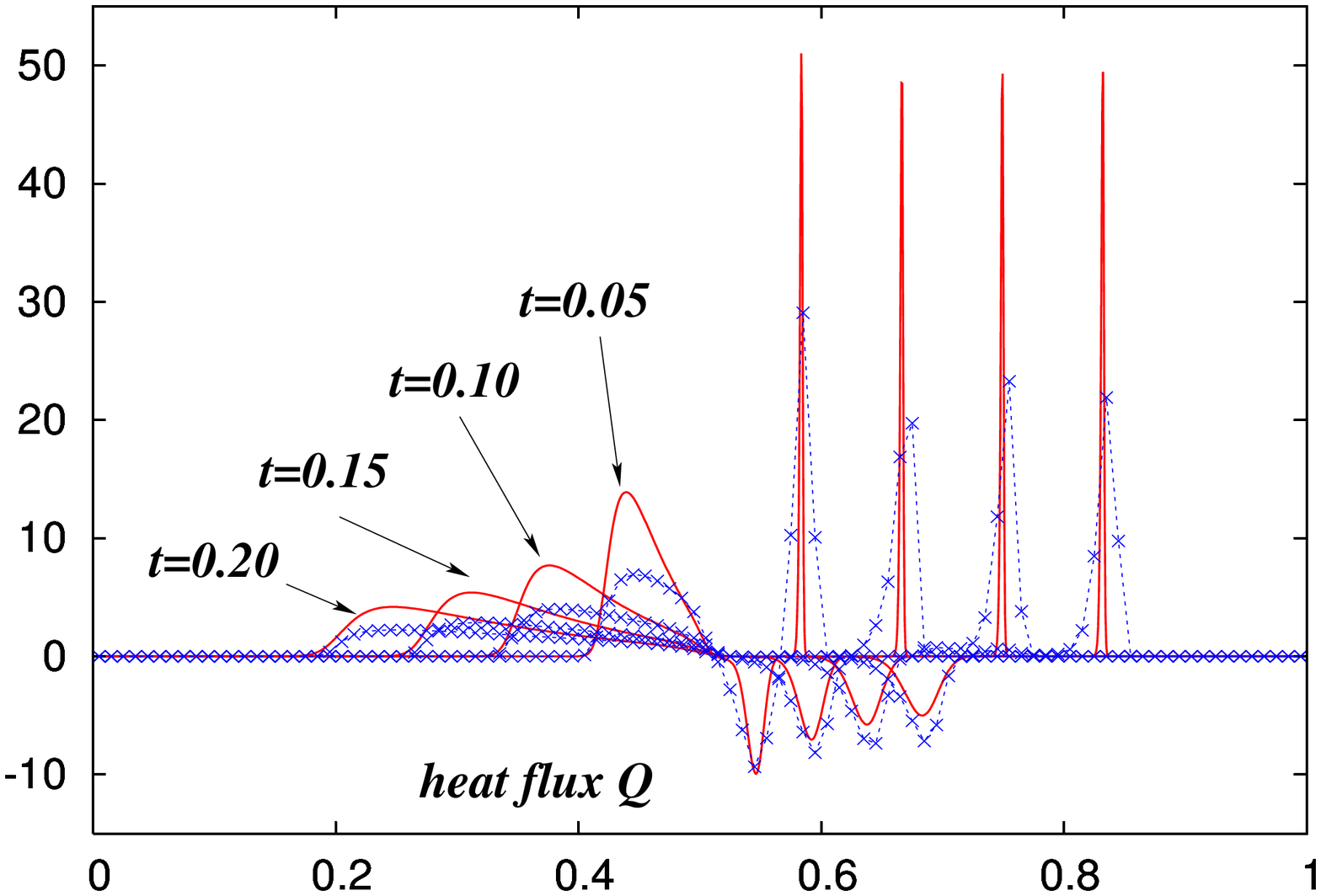}   
\\
(3)&(4)
\end{tabular}
\caption{Sod tube problem ($\varepsilon=10^{-4}$), comparison of the numerical solution to the Boltzmann equation with the our second order method
 (\ref{sch:02}) represented with dots ({\tt x}) with the numerical solution to the compressible Navier-Stokes system (lines):  evolution of  (1) the density $\rho$, (2) mean velocity $u$, (3) temperature $T$ and  (4) heat flux $\QQ$ at time $t=0.05$, $0.1$, $0.15$ and $0.2$.}
\label{fig:04-2}
\end{figure}

\subsection{A problem with mixing regimes}
Now we consider the Boltzmann equation (\ref{eq:B}) with the Knudsen number 
$\varepsilon>0$ depending on the space variable in a wide range of mixing 
scales.
\begin{center}
\begin{tabular}{cc}
\begin{minipage}{8.4cm}
This kind of problem was already studied by several authors  for the BGK model
 \cite{DJM} or the radiative transfer equation \cite{JPT2}. In this
problem,  $\varepsilon:\,\R\mapsto \R^+$ is given by  
$$
\varepsilon(x) = \varepsilon_0 \,+\, \frac{1}{2}\left[\tanh(1-11\,x)\,+\, \tanh(1+11\,x)\,\right],
$$

which varies smoothly from $\varepsilon_0$  to $O(1)$.
\end{minipage} &
\begin{minipage}{6.76cm}
\includegraphics[width=6.75cm]{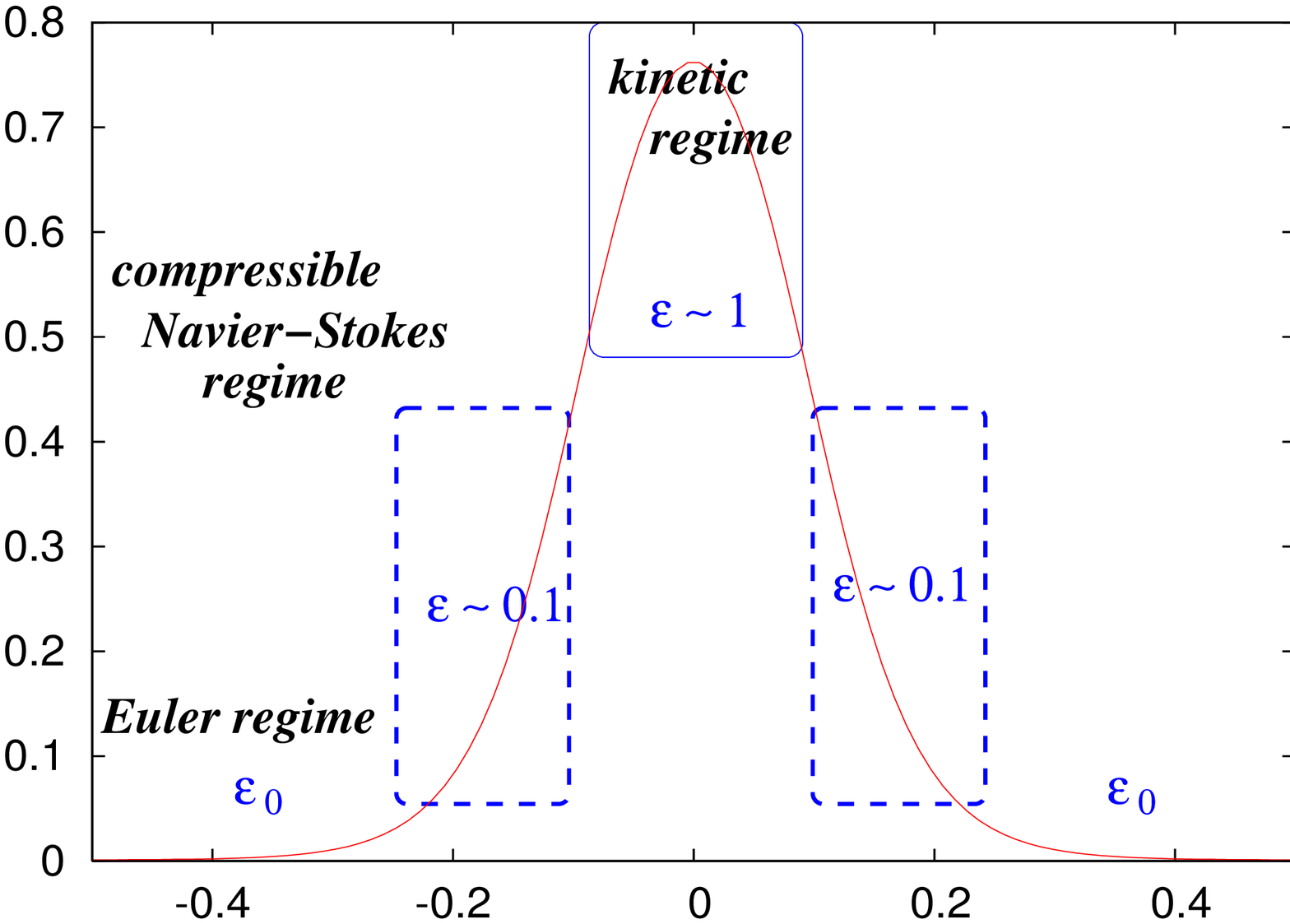}    
\end{minipage}
\end{tabular}
\end{center}

This numerical test is difficult because different scales are involved. It 
requires a good accuracy of the numerical scheme for all range of $\var$.  In order to focus on the multi-scale nature we only consider periodic boundary conditions, even if the method has also been used with specular reflection in space. Furthermore, to increase the difficulty we consider an initial data which is far from the local equilibrium of the collision operator:
$$
f_0(x,v) = \frac{\rho_0}{2}\left[ \,\exp\left(-\frac{|v-u_0|^2}{T}\right) \,+\,\exp\left(-\frac{|v+u_0|^2}{T_0}\right)\,\right], \quad x\,\in\, [-L,L], \,\,v\,\in\,\R^2
$$
with $u_0=(3/4,-3/4)$, 
$$
\rho_0(x)\,=\, \frac{2 + \sin(k\,x)}{2}, \quad T_0(x)\,=\, \frac{5+2\,\cos(k\,x)}{20}
$$
where $k=\pi/L$ and  $L=1/2$.

Here we cannot compare the numerical solution with the 
one obtained by a macroscopic model. From the numerical simulations, 
we observe that the solution  is  smooth during a short time and some discontinuities are formed in the region where the Knudsen number $\varepsilon$ is very small and then propagate into the physical domain.  

On the one hand, we only take $\varepsilon_0=10^{-3}$  in order to  propose a comparison of numerical solutions computed with a second order method
 using a time step $\Delta t=0.001$ (such that the CFL condition for the transport part is satisfied)   and the one by the second order explicit Runge-Kutta 
method with a smaller time step 
$\Delta t=0.0001$)
to get stability. The number of points in space is $n_x=200$ and in velocity is $n_v^2=32^2$. Clearly,  in  Figure~\ref{fig:05-1},  the results are in good agreement even if our new method does not solve accurately small time scales when 
the solution is for from the local equilibrium.  Moreover  in  
Figure~\ref{fig:05-2}, we present numerical results with only $n_x=50$ and 
$n_x=200$, and $n_v^2=32^2$ to show the performance of the method 
with a small number of discretization points in space. With $n_x=50$ points the qualitative behavior of the macroscopic quantities $(\rho,u,T)$ is fairly good.

On the other hand, we have performed different numerical results when $\varepsilon_0=10^{-4}$, then the variations of  $\varepsilon$ starts from $10^{-4}$ to $1$ in the space domain. In that case, the computational time of a fully explicit scheme would be more than one hundred times larger than the one required for the asymptotic preserving scheme (\ref{sch:02}). We observe that discontinuities appear on the density, mean velocity and temperature and then propagate accurately into the domain. The shock speed is roughly the same for the different numerical resolutions.  Therefore, this method gives a very good compromise between accuracy and stability for the different regimes. Numerical results are not plotted since they are relatively close to the ones presented in Figures \ref{fig:05-1} and \ref{fig:05-2}.    

\begin{center}
\begin{figure}[htbp]
\begin{tabular}{ccc}
\includegraphics[width=5.cm]{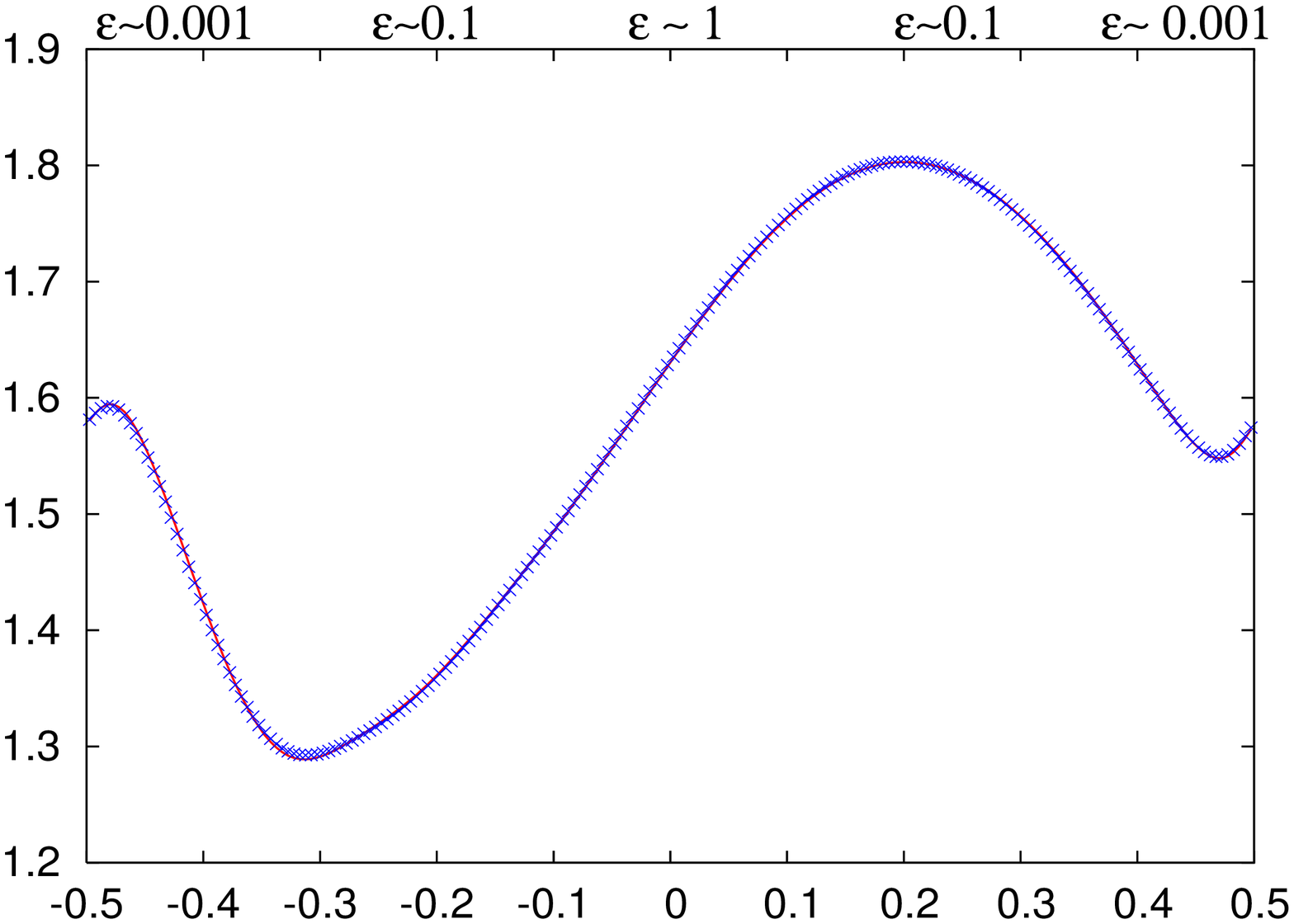}    
&
\includegraphics[width=5.cm]{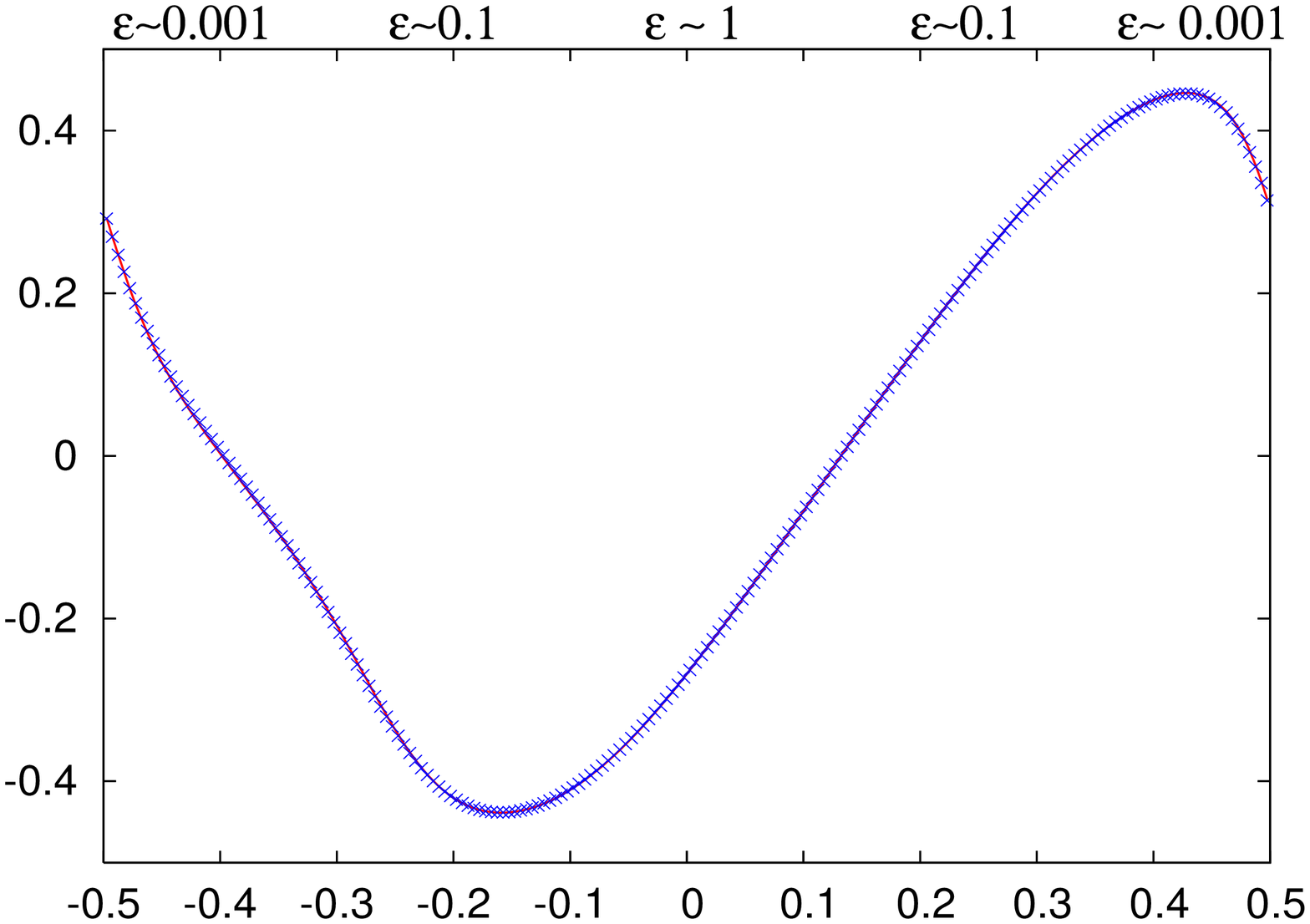}    
&
\includegraphics[width=5.cm]{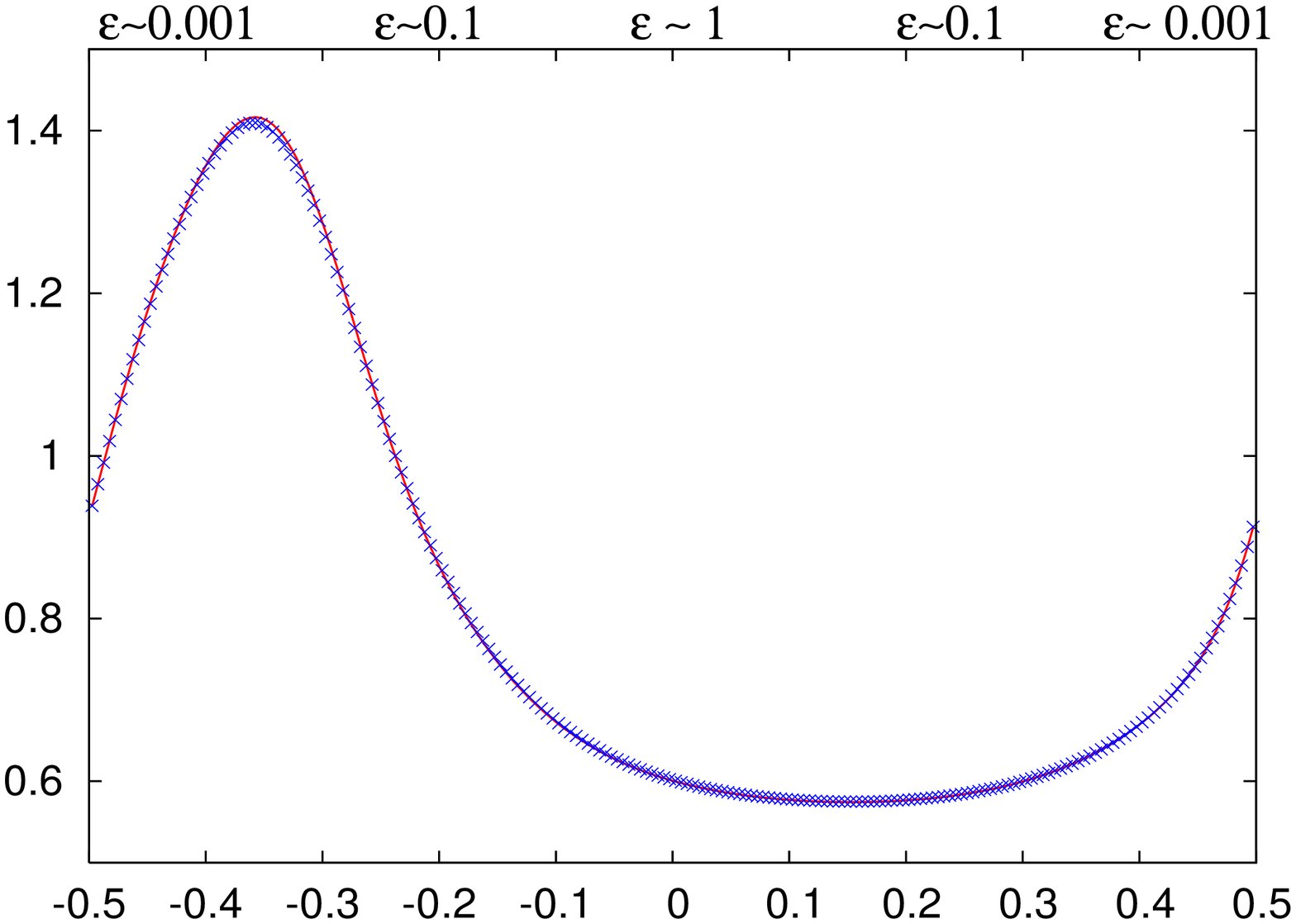}   
\\
\includegraphics[width=5.cm]{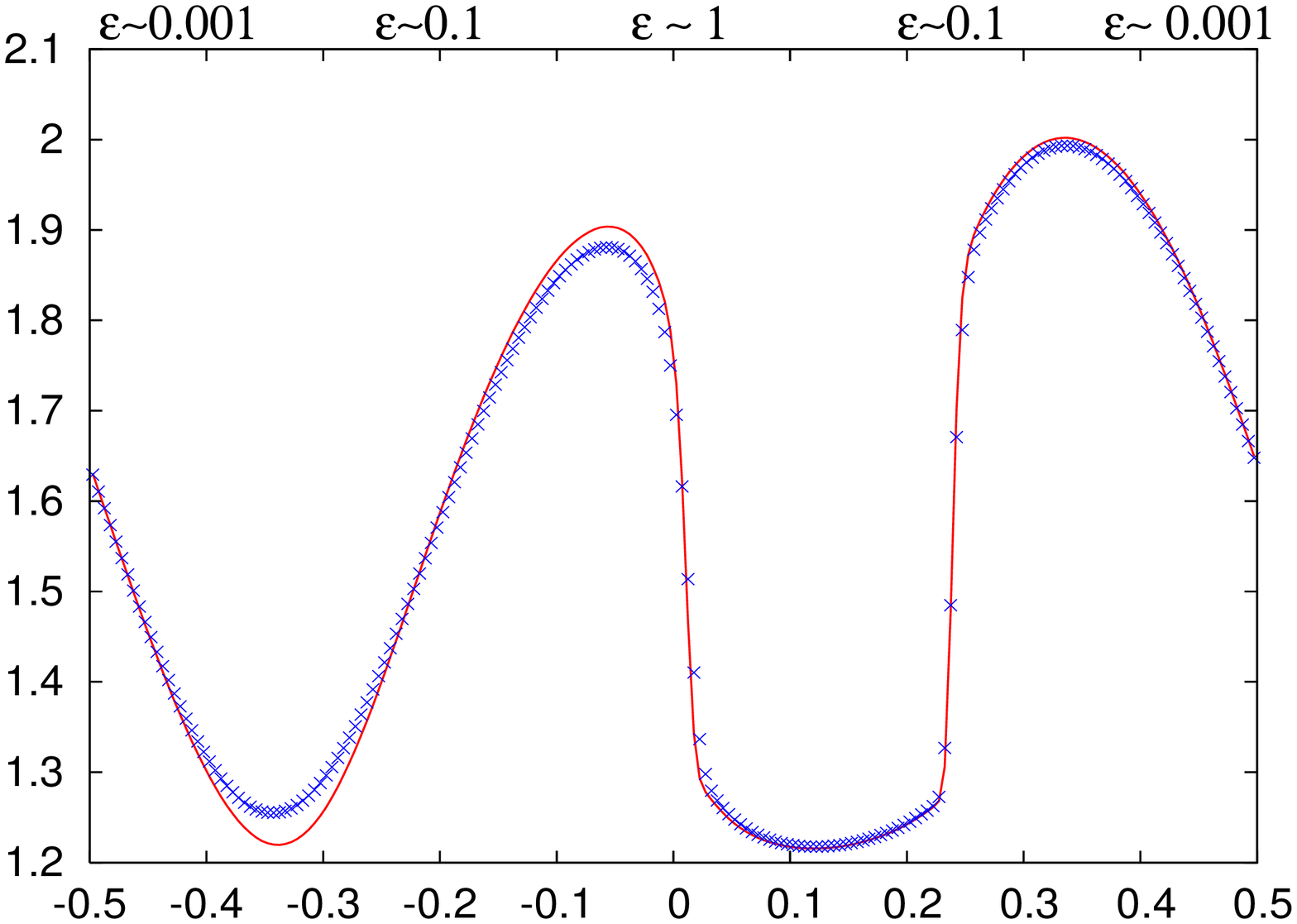}    
&
\includegraphics[width=5.cm]{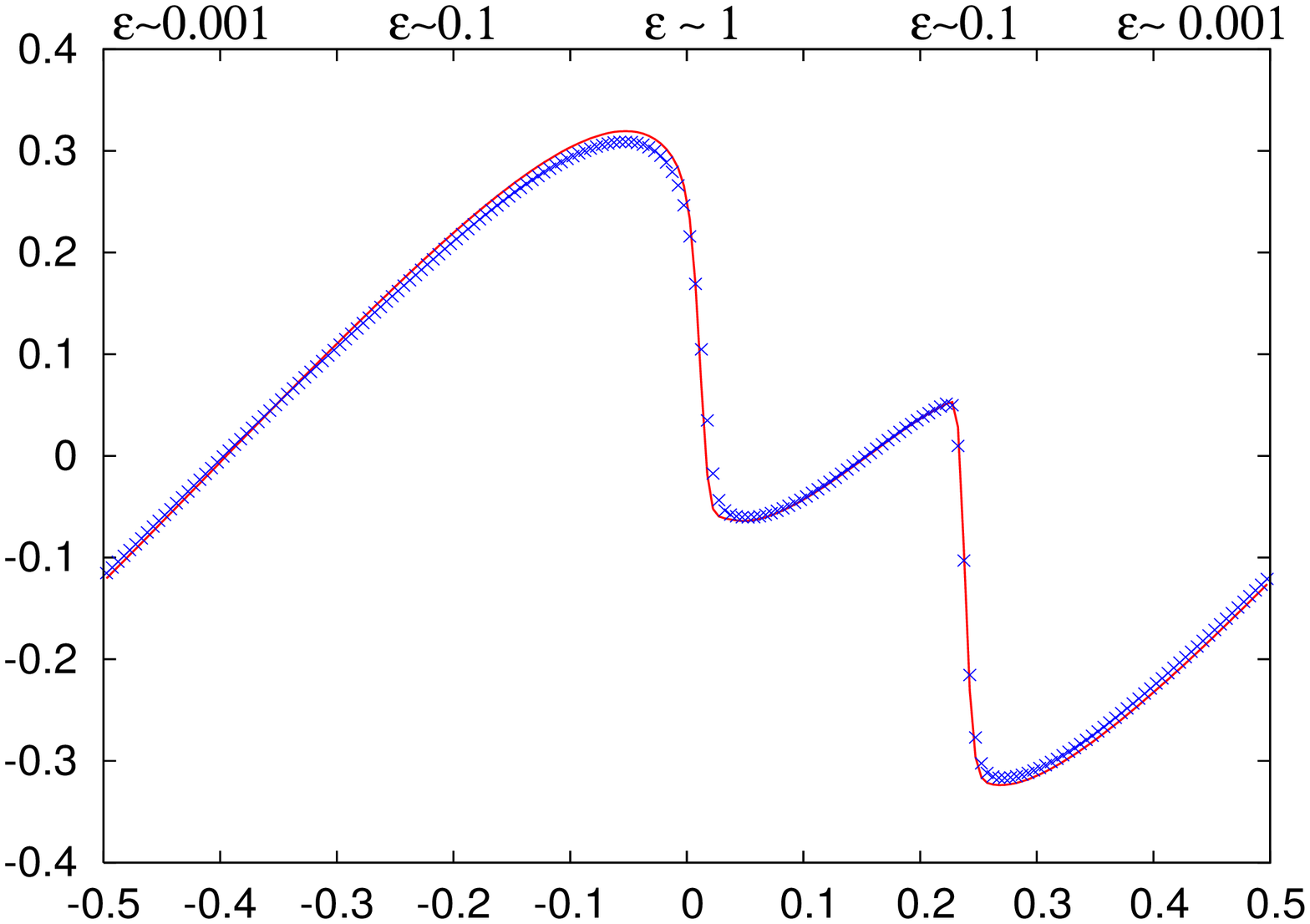}    
&
\includegraphics[width=5.cm]{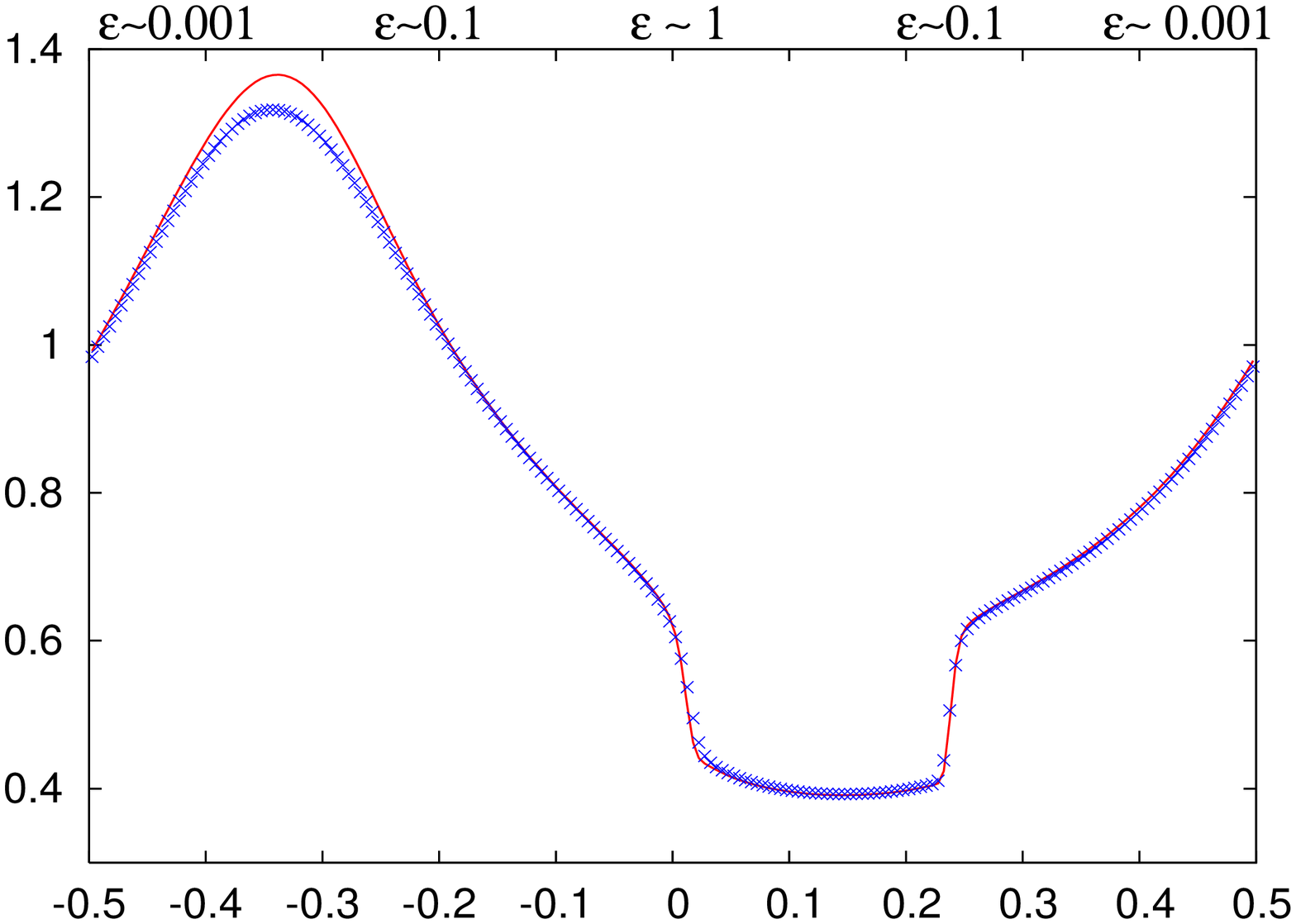}   
\\
\includegraphics[width=5.cm]{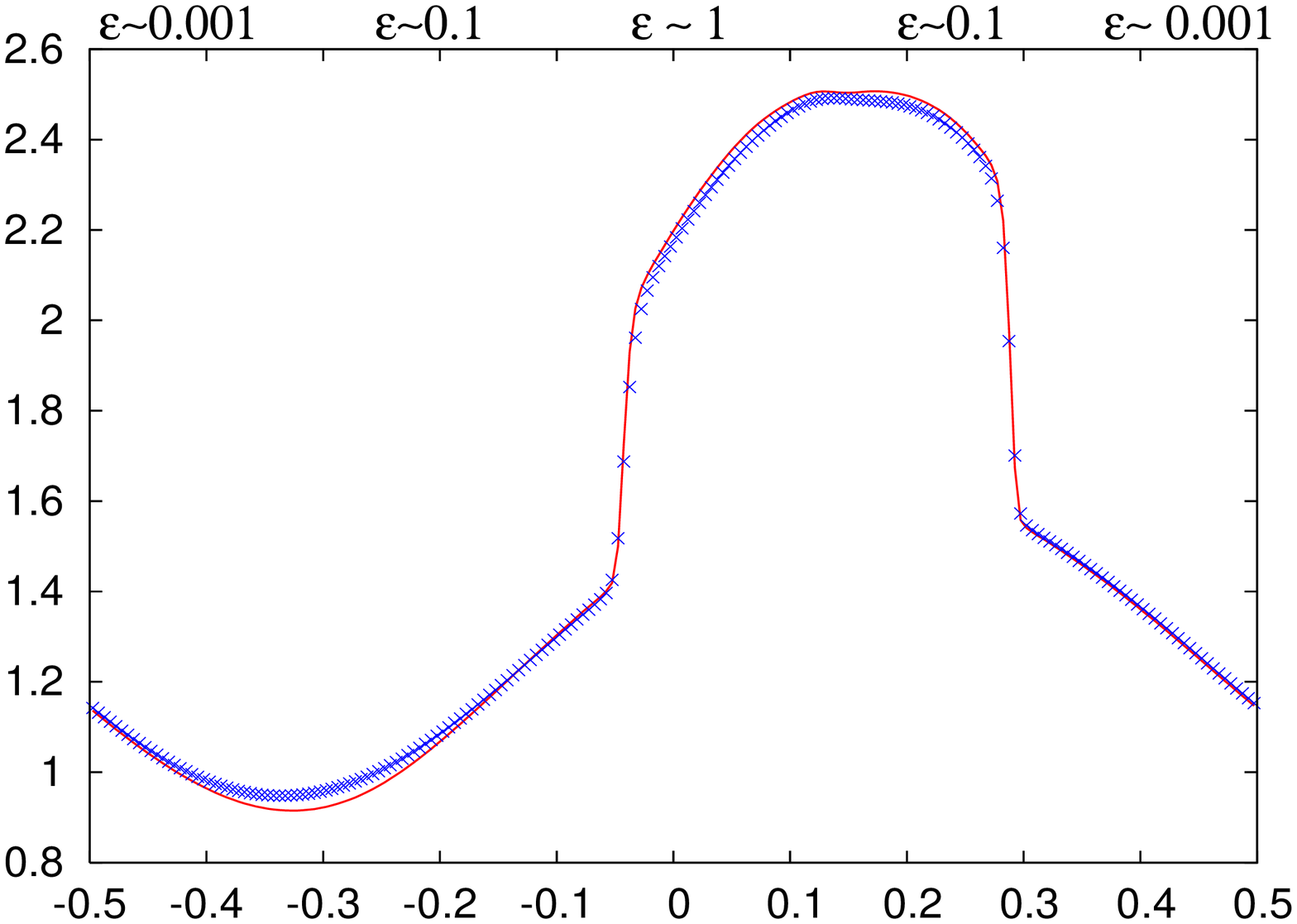}    
&
\includegraphics[width=5.cm]{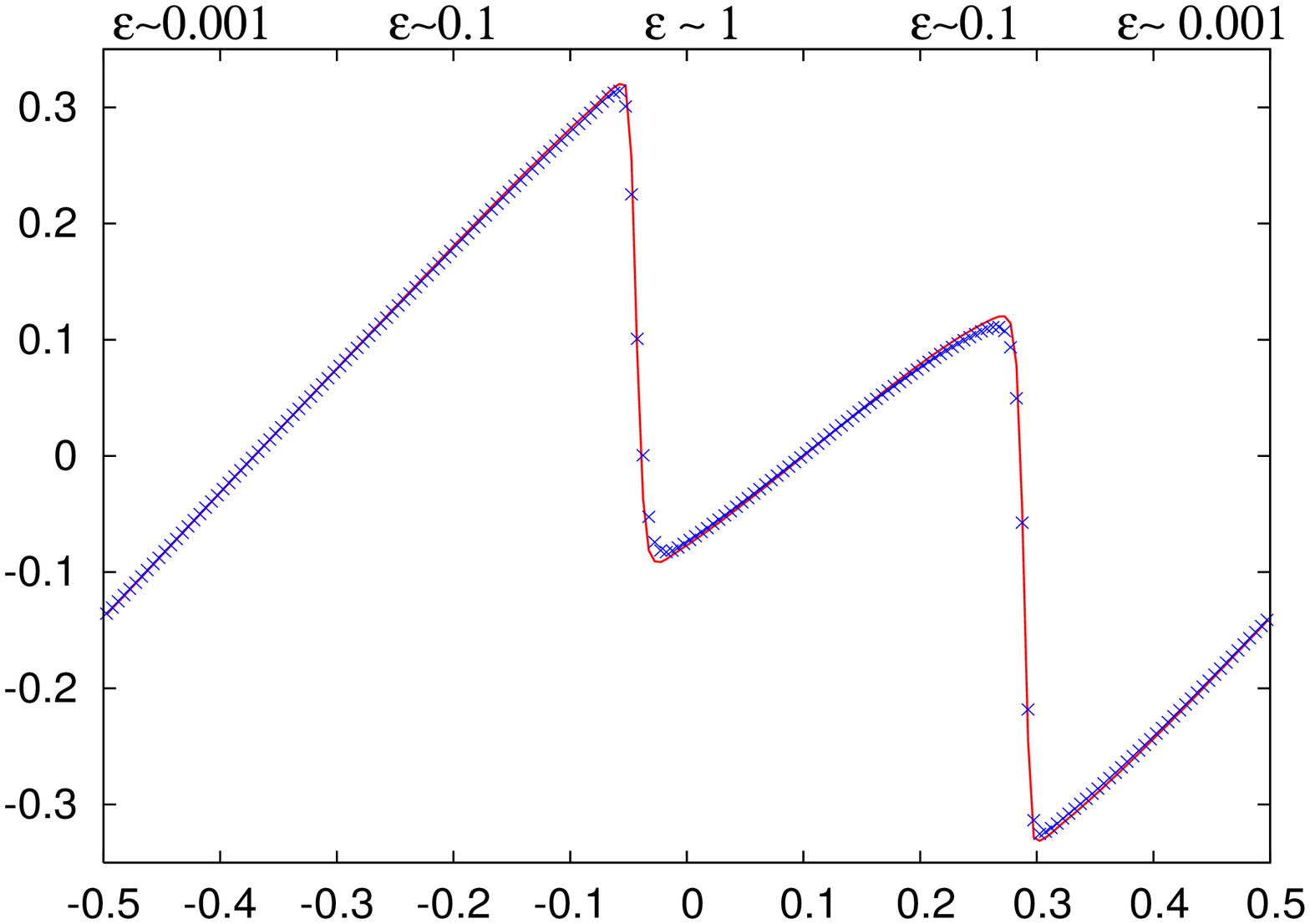}    
&
\includegraphics[width=5.cm]{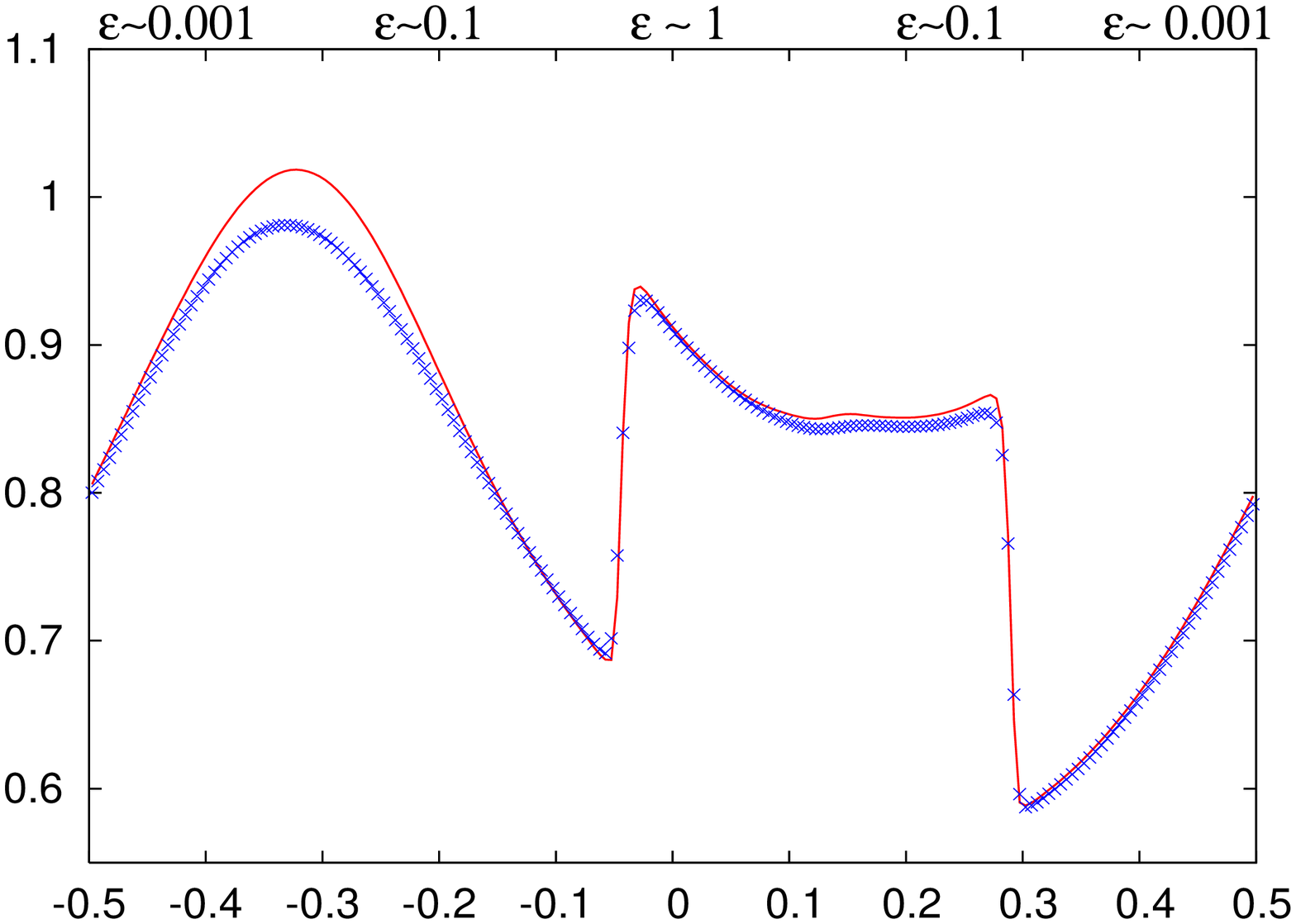}   
\\
(1)&(2)&(3)
\end{tabular}
\caption{Mixing regime problem ($\varepsilon_0=10^{-3}$), comparison of the numerical solution to the Boltzmann equation with the second order method (\ref{sch:02}) represented with dots ({\tt x}) with the numerical solution obtained with the explicit Runge-Kutta method using a small time step (line): evolution of  (1) the density $\rho$, (2) mean velocity $u$, (3) temperature $T$ at time $t=0.25$, $0.5$ and $0.75$.}
\label{fig:05-1}
\end{figure}
\end{center}

\begin{center}
\begin{figure}[htbp]
\begin{tabular}{ccc}
\includegraphics[width=5.cm]{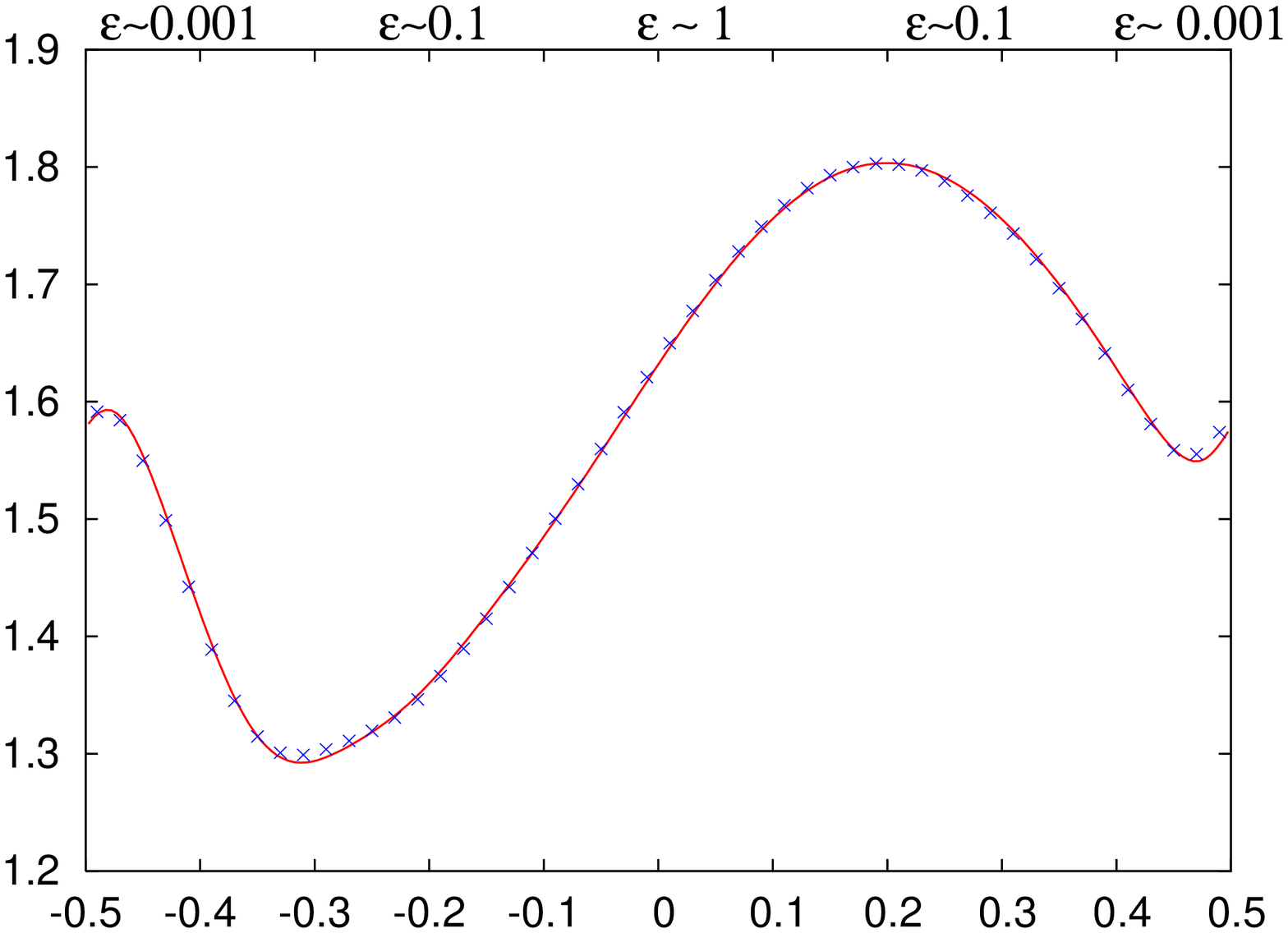}    
&
\includegraphics[width=5.cm]{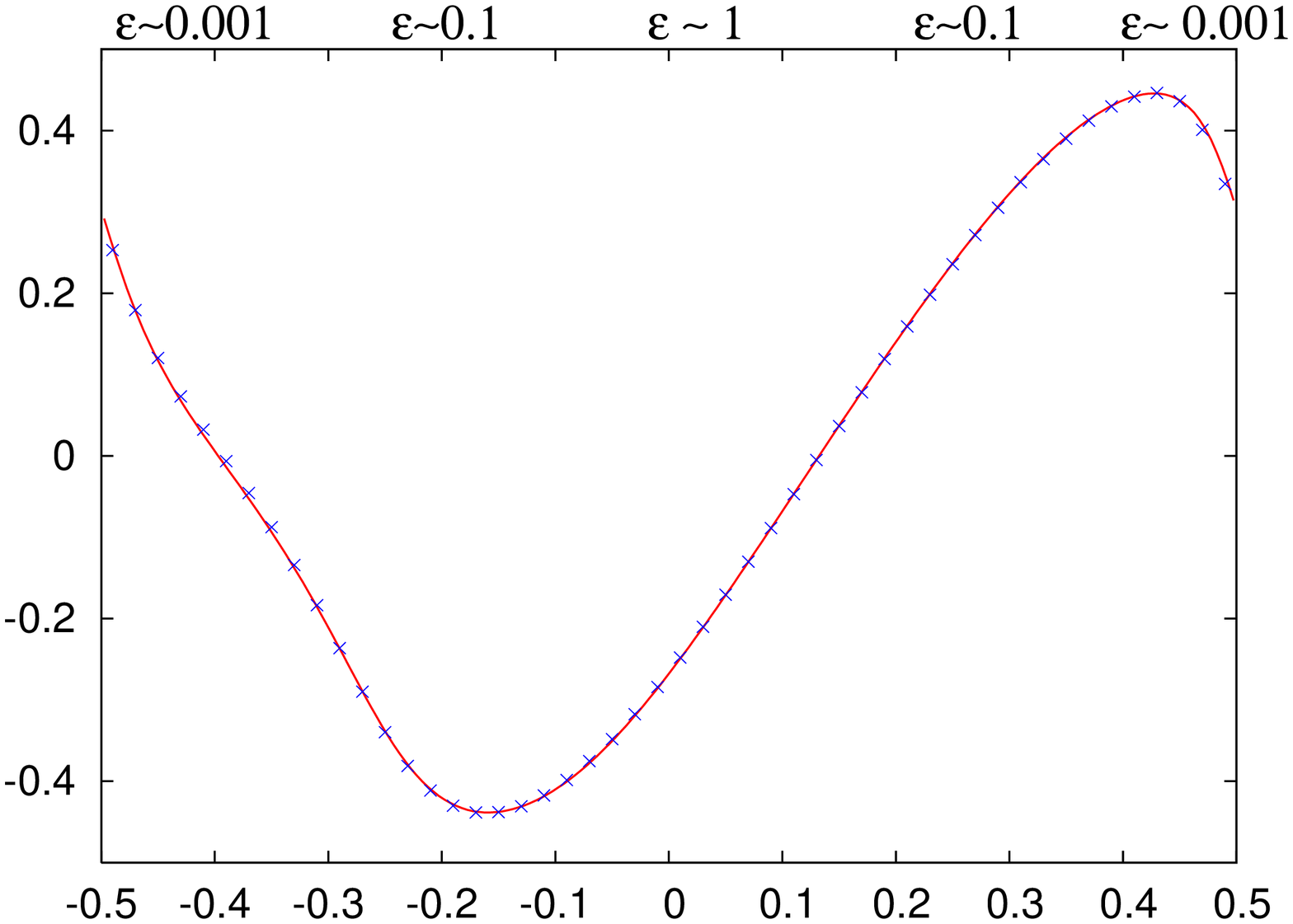}    
&
\includegraphics[width=5.cm]{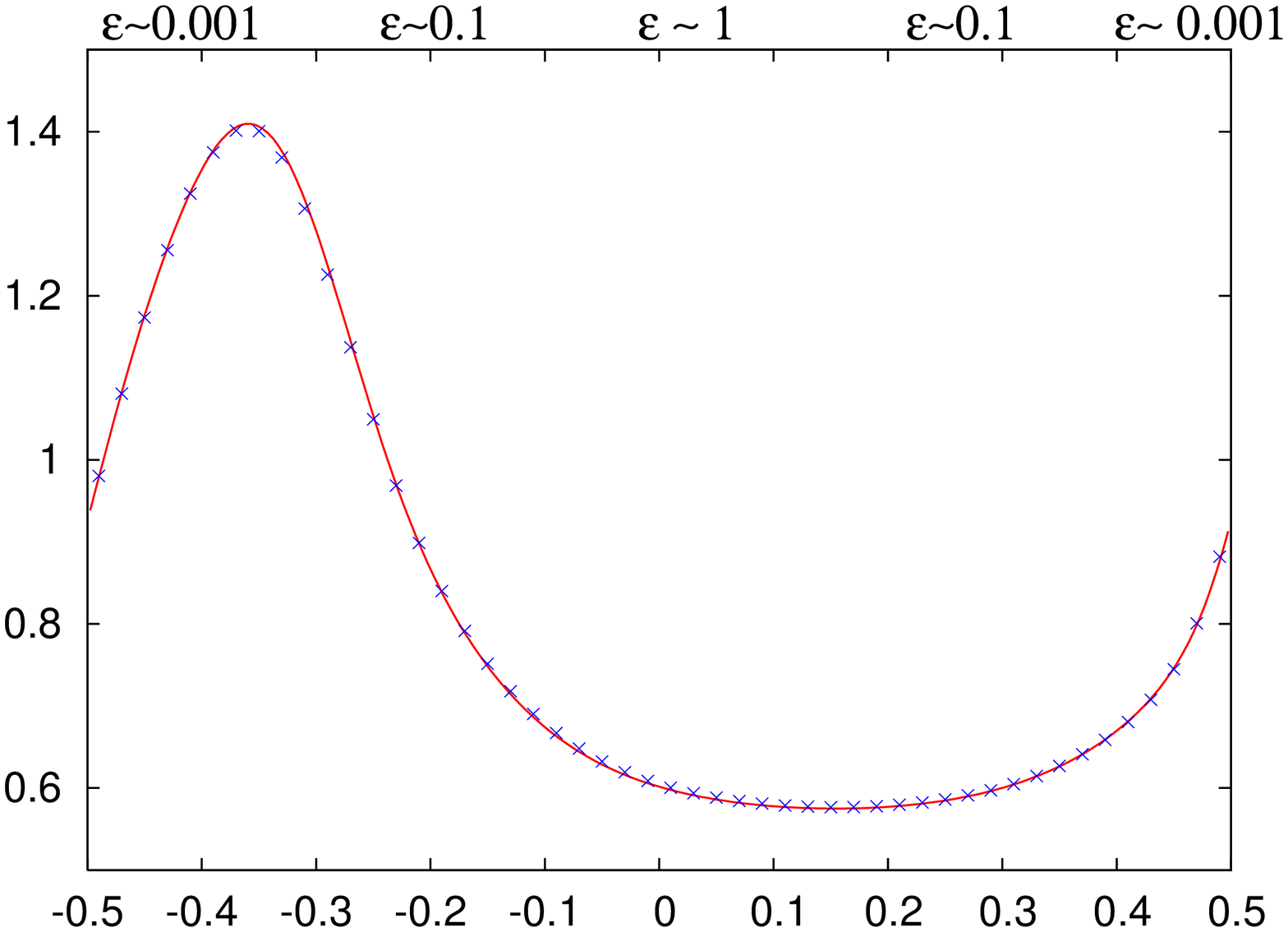}   
\\
\includegraphics[width=5.cm]{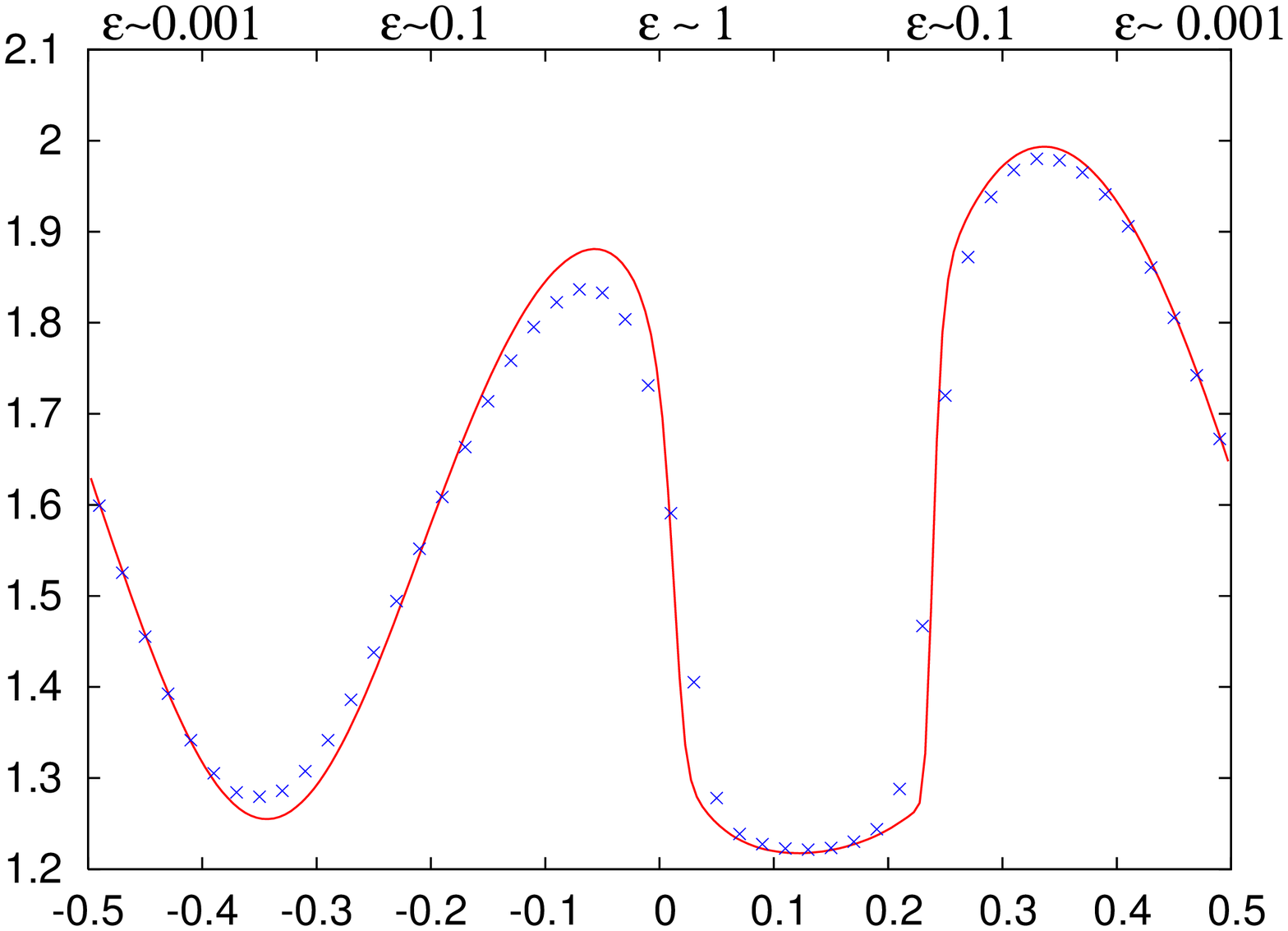}    
&
\includegraphics[width=5.cm]{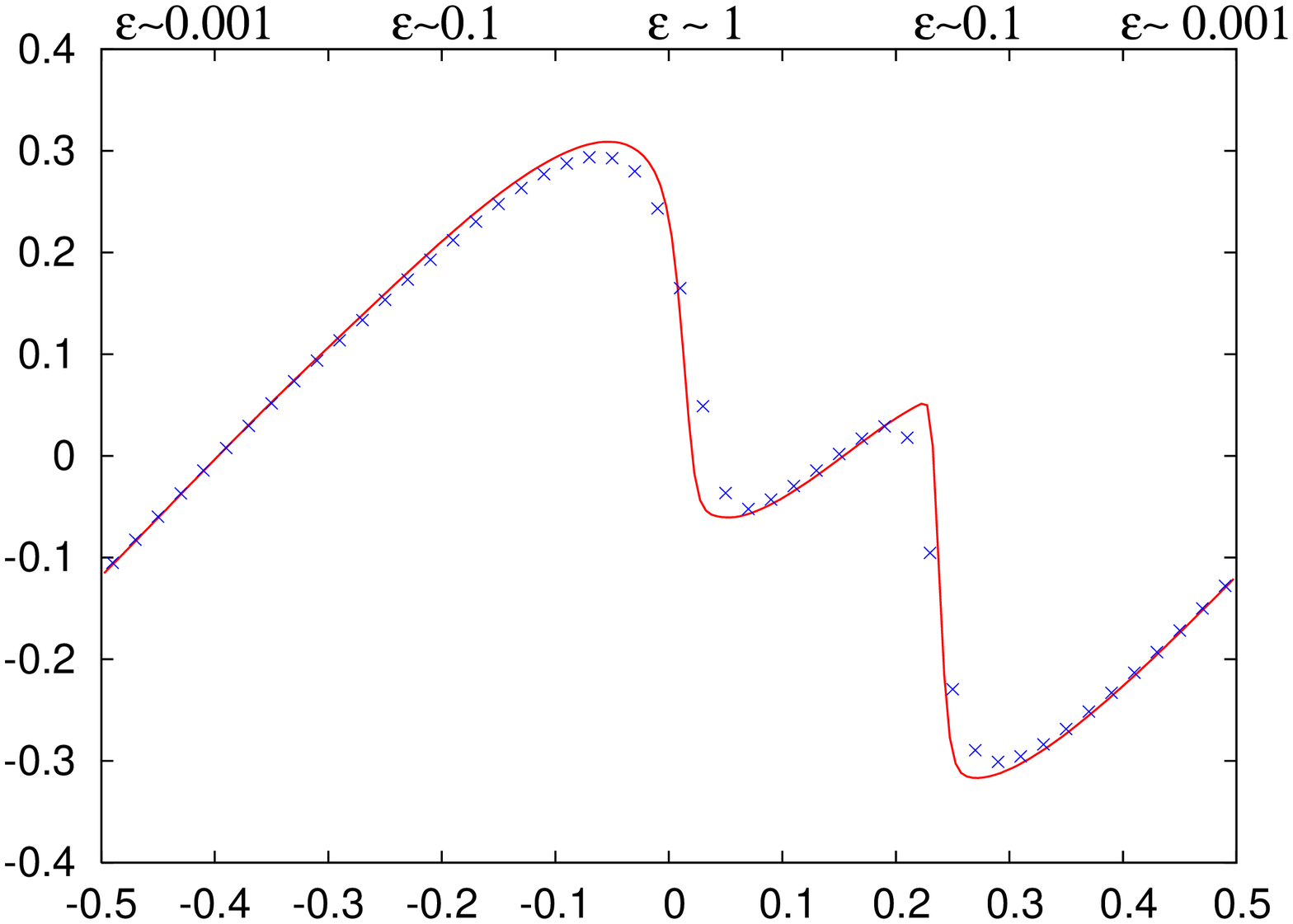}    
&
\includegraphics[width=5.cm]{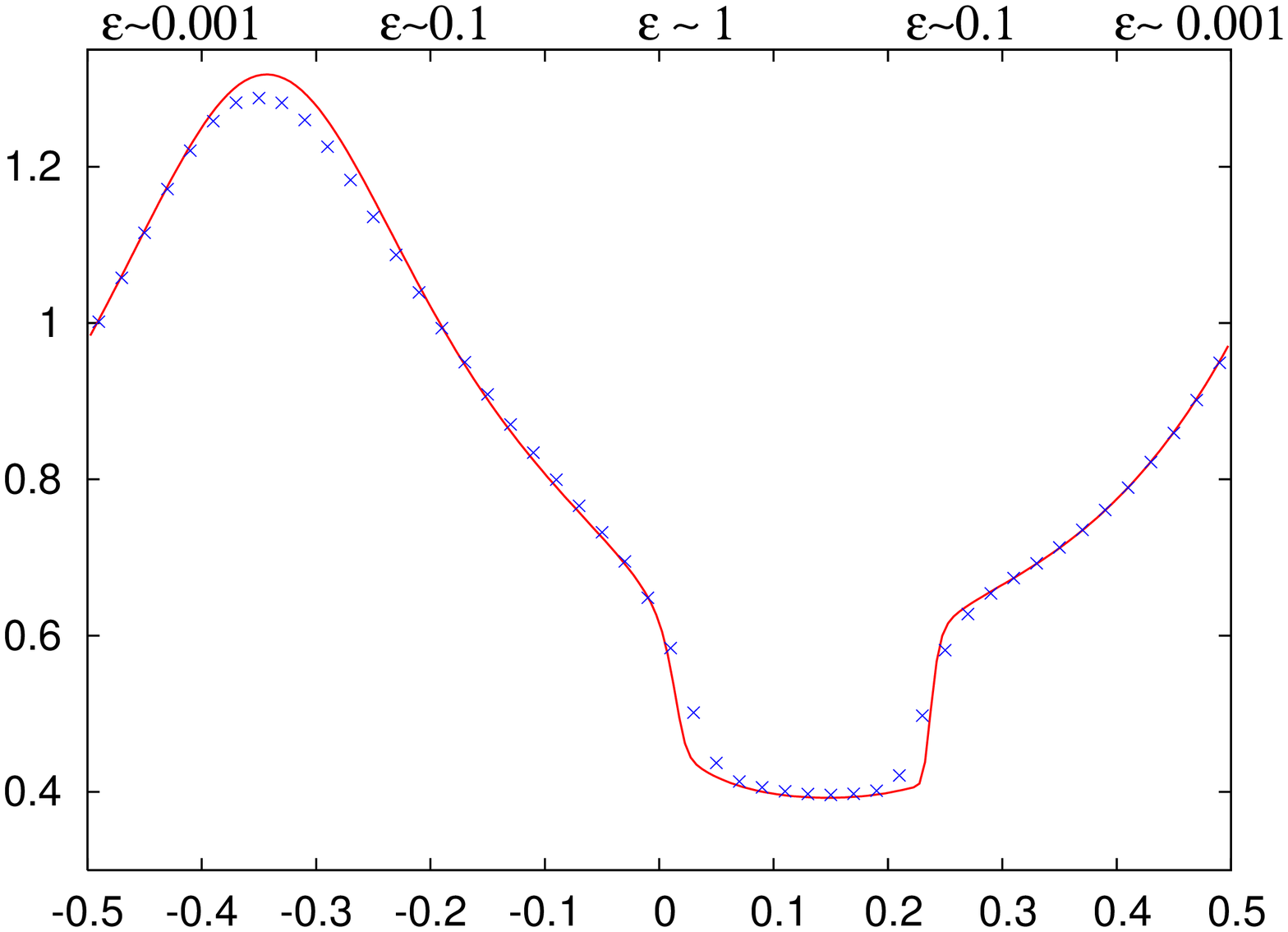}   
\\
\includegraphics[width=5.cm]{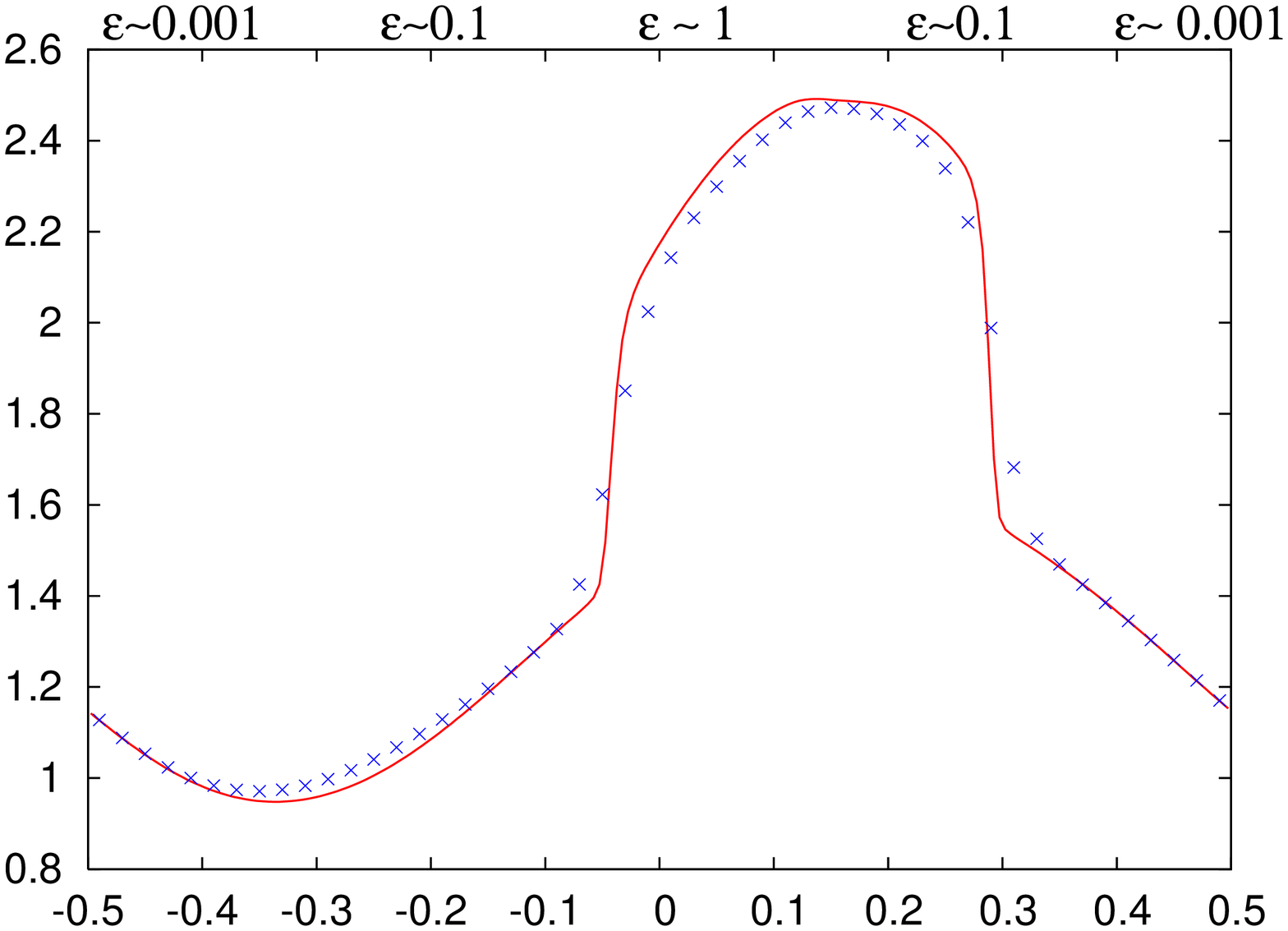}    
&
\includegraphics[width=5.cm]{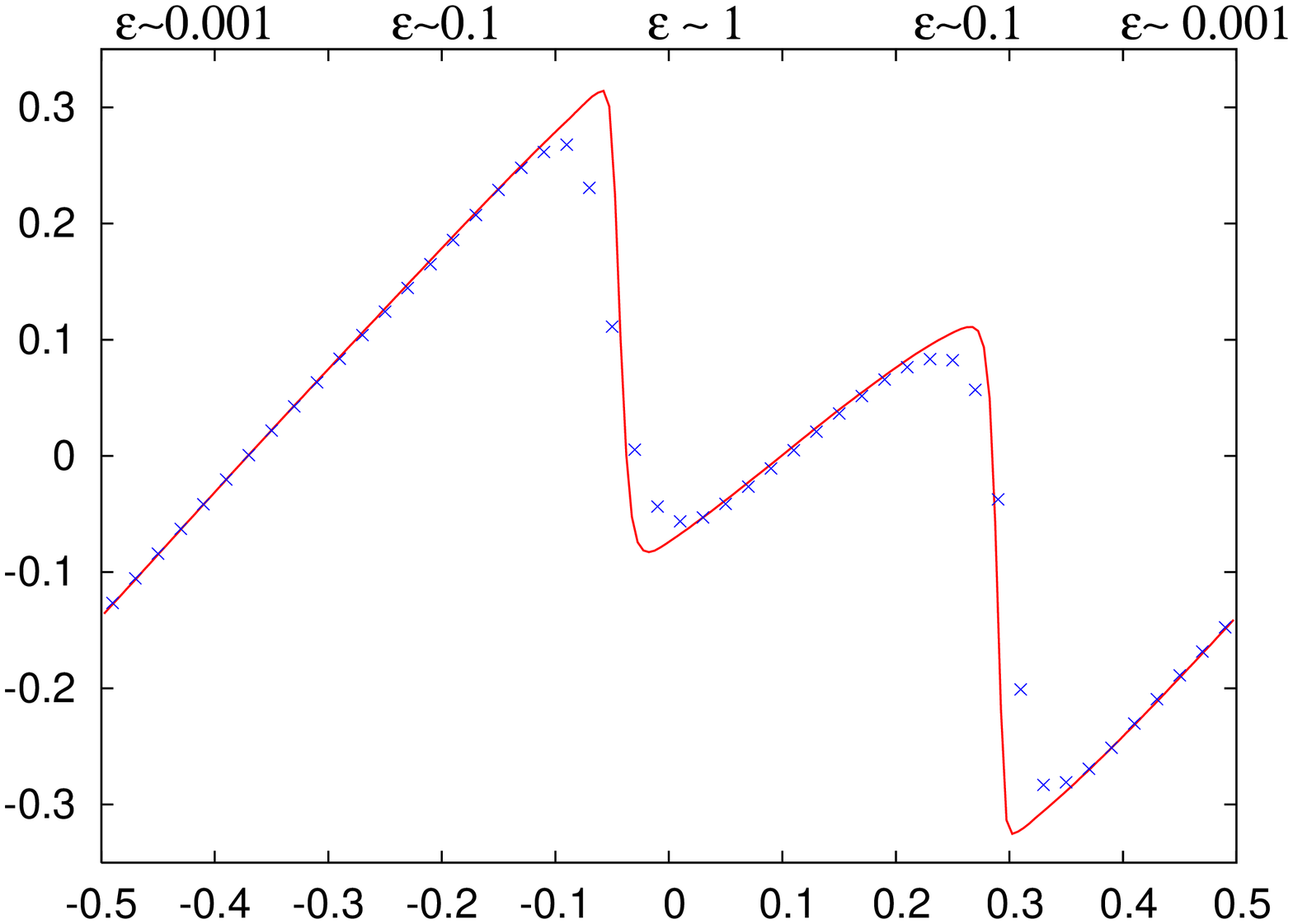}    
&
\includegraphics[width=5.cm]{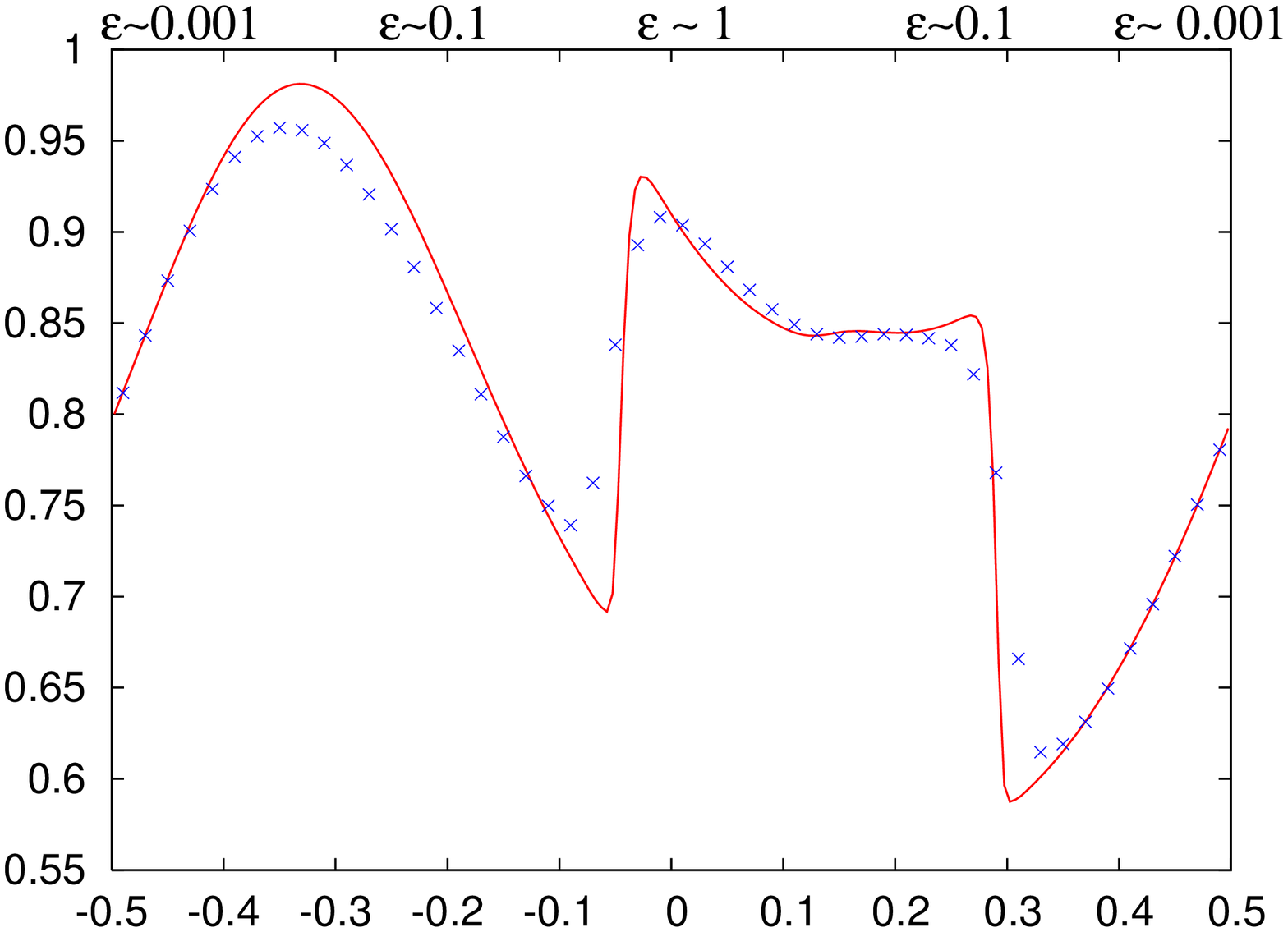}   
\\
(1)&(2)&(3)
\end{tabular}
\caption{Mixing regime problem ($\varepsilon_0=10^{-3}$), comparison of the numerical solution to the Boltzmann equation obtained with the AP scheme (\ref{sch:02}) using $n_x=50$  (dots {\tt x}) and $n_x=200$ points (line):  evolution of  (1) the density $\rho$, (2) mean velocity $u$, (3) temperature $T$ at time $t=0.25$, $0.5$ and $0.75$.}
\label{fig:05-2}
\end{figure}
\end{center}

\section{Other applications: numerical stability}
\label{sec4}
\setcounter{equation}{0}
In this section, we want to illustate the efficiency of the asymptotic preserving scheme to treat high order differential operators. We have already applied such a scheme for Willmore flow (fourth order differential operator \cite{filbet:shu,smereka}). Here, we consider the flow of gas in a two dimensional porous medium with initial density $g_0(v)\geq 0$. The distribution function $g(t,v)$ then satisfies the nonlinear degenerate parabolic equation
\begin{equation}
\label{eq:Porous}
\frac{\partial g}{\partial t} = \Delta_v g^m,
\end{equation}
where $m>1$ is a physical constant. Assuming that 
$$
\int_{\R^2} (1+|v|^2) \, g_0(v) dv < +\infty,
$$
J.A. Carrillo and G. Toscani \cite{CT} proved that $g(t,v)$ behaves asymptotically in a self-similar way like the Barenblatt-Pattle solution, as $t\rightarrow +\infty$. More precisely, it is easy to see that if we consider the change of variables
\begin{equation}
\label{dynscal}
g(t,v) \,=\,  \frac{1}{s(t)}\, f\left(\log(s(t)), \frac{v}{s(t)} \right)\,,
\end{equation}
where $s(t):=\sqrt{1+2t}$,  the new distribution function $f$  is solution to 
$$
\label{eq:Porous2}
\frac{\partial f}{\partial t} = \nabla_v\cdot\left( v\, f \,+\,  \nabla_v f^m\right),
$$
and converges to the Barenblatt-Pattle distribution 
$$
\M(v) = \left( C - \frac{m-1}{2\,m}\,|v|^2\right)_+^{1/(m-1)},
$$
where $C$ is uniquely determined and depends on the initial mass  $g_0$ but not on the ``details'' of the initial data.

Instead of working on (\ref{eq:Porous})  directly, we will study the asymptotic decay towards its equilibrium. The key argument on the proof of J.A. Carrillo and G. Toscani is the control  of the entropy functional 
$$
H(f) = \int_{\R^2} \left[ |v|^2 \, f(t,v) + \frac{m}{m-1} f^m(t,v)\right]\,dv, 
$$  
which satisfies
$$
\frac{dH(f)}{dt} = -\int_{\R^2}f(t,v)\,\left| v + \frac{m}{m-1} \nabla f^{m-1}\right|^2\,dv \leq 0
$$
or the control of the relative entropy  $H(f|\M) = H(f) - H(\M)$ with respect to the steady state $\M$.

Numerical discretization of this problem leads to the following difficulty : explicit schemes are constrained by a CFL condition $\Delta t \simeq \Delta v^2$ whereas implicit schemes require the numerical resolution of a nonlinear problem at each time step (with a local constraint on the time step). We refer to \cite{CF,F06} for a fully implicit approximation  preserving steady states for nonlinear Fokker-Planck type equations. 

Here we do not focus on the velocity discretization, but only want to apply our splitting operator technique to remove this severe constraint on the time step. Here the parameter $\varepsilon$ does not represent  a physical time scale but is only related to the velocity space discretization $\Delta v$. Therefore, we set $Q(f)  = \nabla_v\cdot \left(v\, f \,+\, \nabla_v f^m  \right) $ and $P(f)=\nabla Q(\M) \,(f-\M)$, which leads to the following decomposition
$$
\frac{\partial f }{\partial t} \,=\, \underbrace{\Delta_v \left(f^m \,-\, m\,\M^{m-1}\,f \right)}_\textrm{non stiff part} \,\,+\,\,  \underbrace{\nabla_v\cdot \left(v\, f  \,+\, m \,\nabla_v\left(\M^{m-1}\,f\right) \right)}_\textrm{stiff linear part}.
$$ 
Then we apply a simple IMEX scheme which only requires the numerical resolution of a linear system at each time step.  

We choose $m=3$ and a discontinuous initial datum far from the equilibrium
$$
f_0(v)  = \sum_{l\in \{1,2\}} \sum_{k\in \{0,\dots,n-1\}} \frac{1}{10}\,{\bf 1}_{\B(0,r_0)}(v-v_{k,l})
$$
where $n=12$, $r_0=1/4$ and $v_{k,l}\,=l\,{\rm e}^{i\,\theta_k}$, with $\theta_k=\,2\,k\,\pi/n$, $k=0,\dots,n-1$. We use a standard velocity discretization in the velocity space based on an upwind finite volume approximation for the transport term and a center difference for the diffusive part. We take $n_v^2=120^2$
 in velocity and a time step  $\Delta t= 0.02$ which is much larger than the time step satisfying  a classical CFL condition for this problem $\Delta t\simeq O(\Delta v^2)$. The numerical scheme (\ref{sch:01}) is still stable and the numerical  solution preserves  nonegativity at each time step (see Figure~\ref{fig:06-1})! For large time, the solution converges to an approximation of
 the steady state even  if the present scheme is not exactly well-balanced (it does not preserve exactly the steady state). Moreover,
 to get a better idea on the behavior of the numerical solution, we plot the evolution of the entropy and its dissipation for different time steps. More surprisingly, the numerical entropy is decreasing and the dissipation converges towards zero when times goes to infinity.

\begin{center}
\begin{figure}[htbp]
\begin{tabular}{cc}
\includegraphics[width=7.75cm,height=6.75cm]{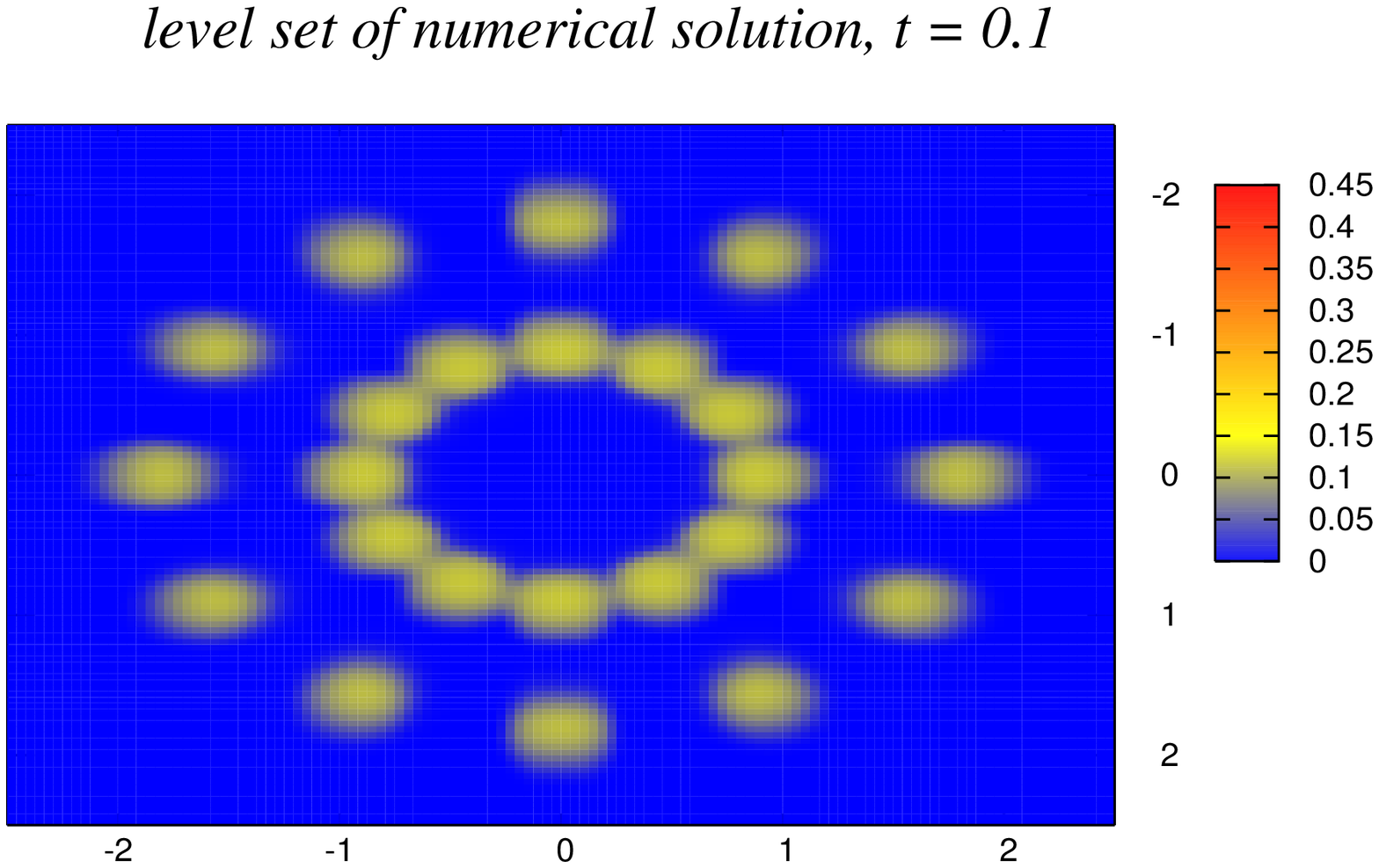}    
&
\includegraphics[width=7.75cm,height=6.75cm]{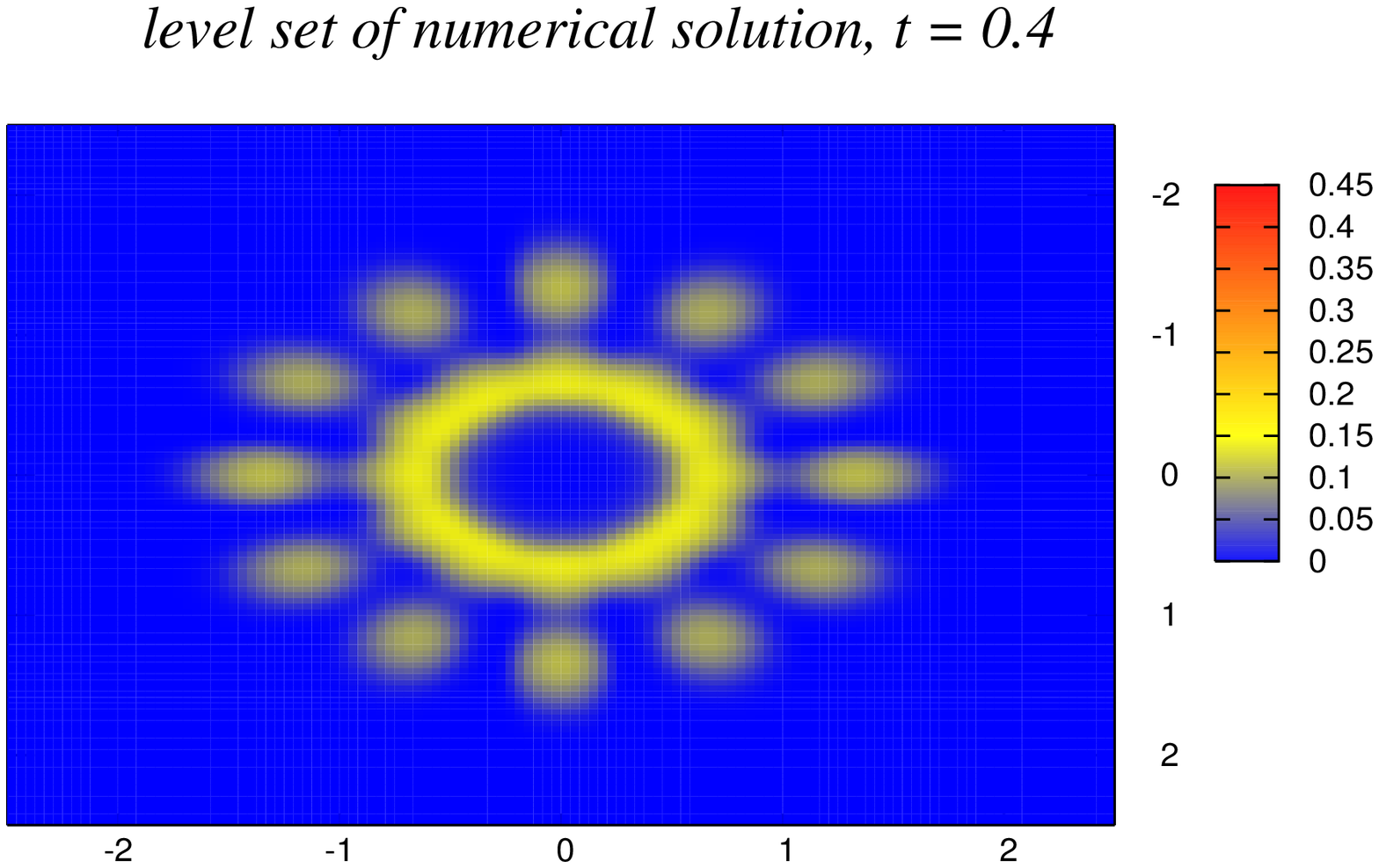}    
\\
\includegraphics[width=7.75cm,height=6.75cm]{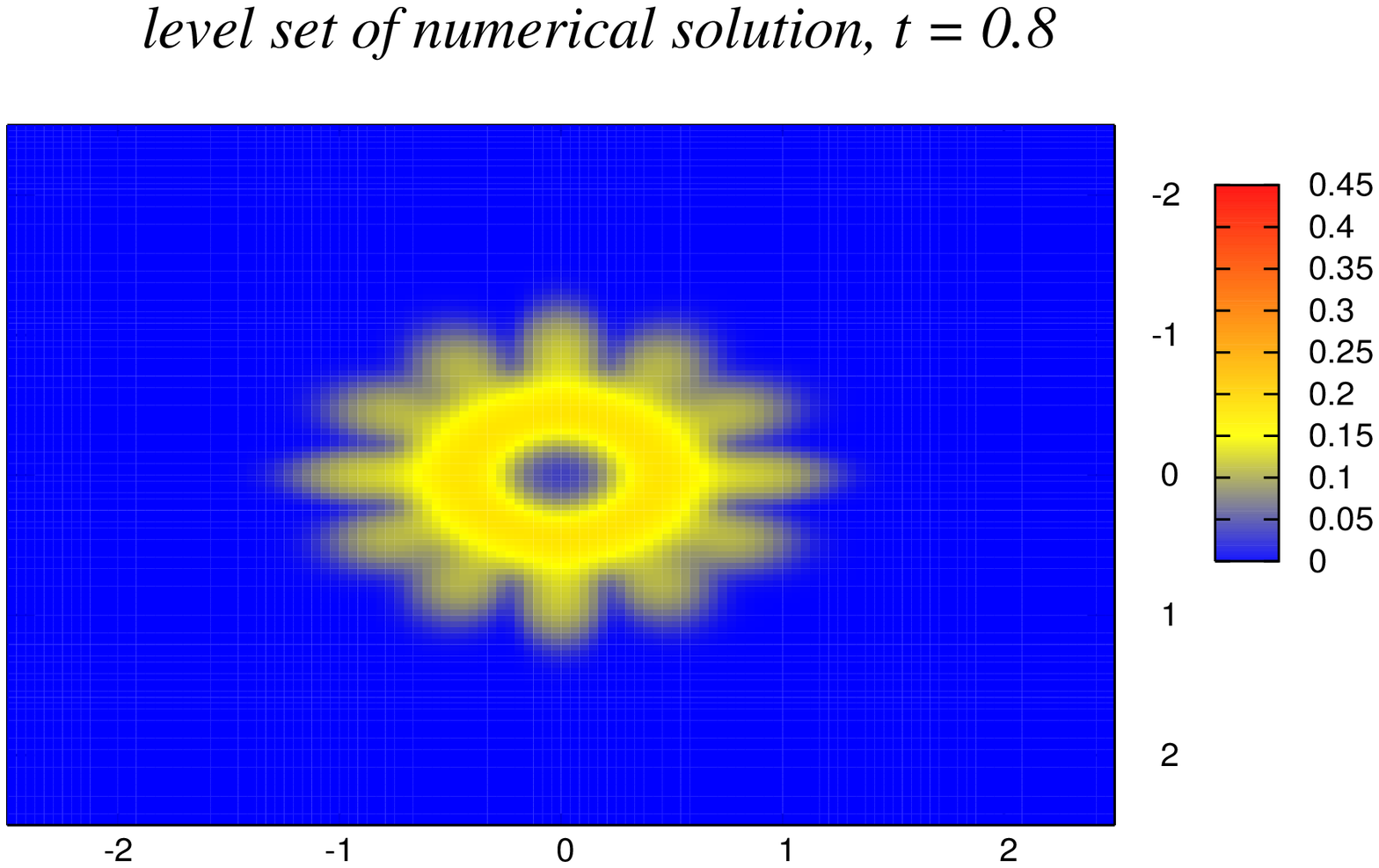}    
&
\includegraphics[width=7.75cm,height=6.75cm]{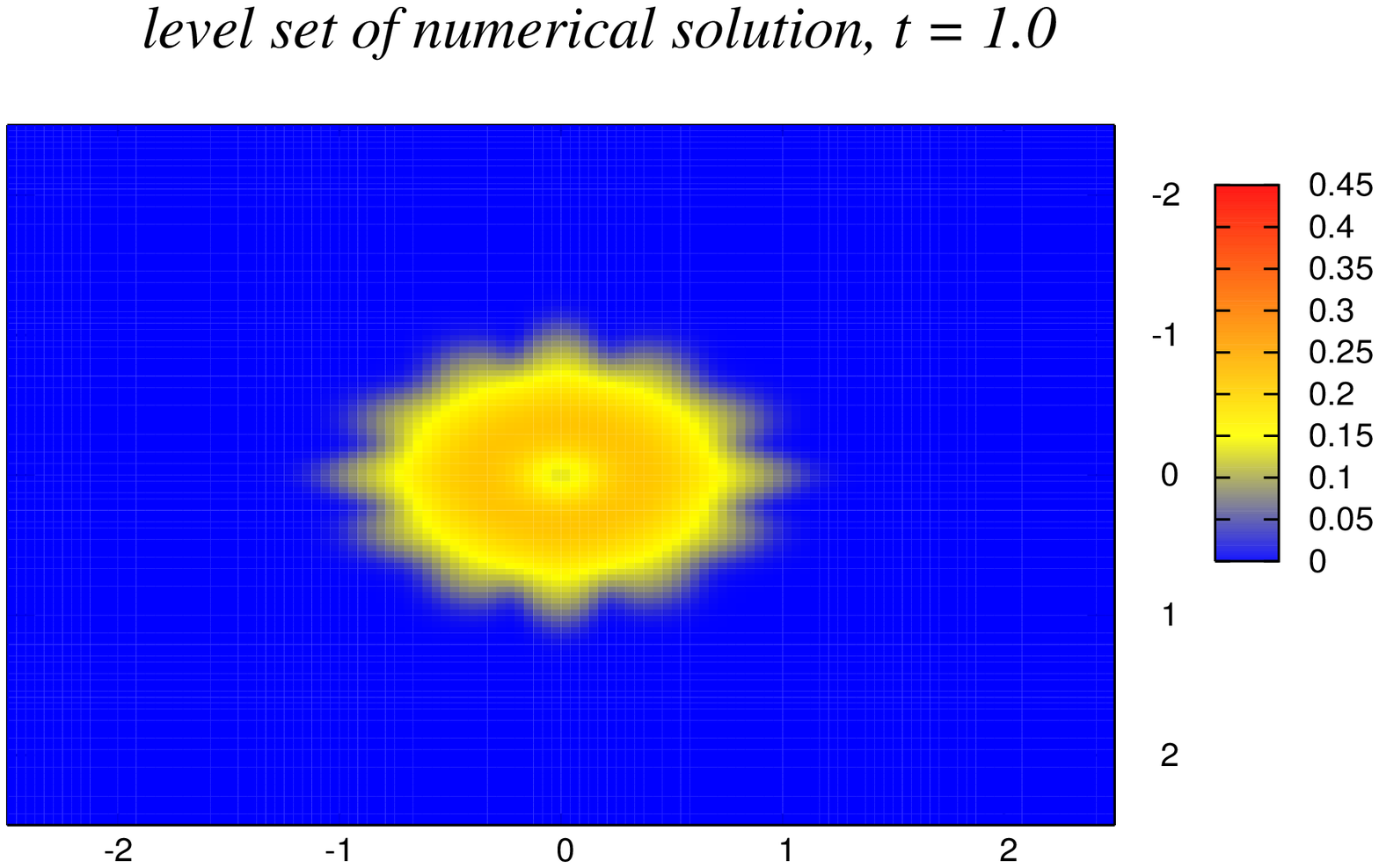}    
\\
\includegraphics[width=7.75cm,height=6.75cm]{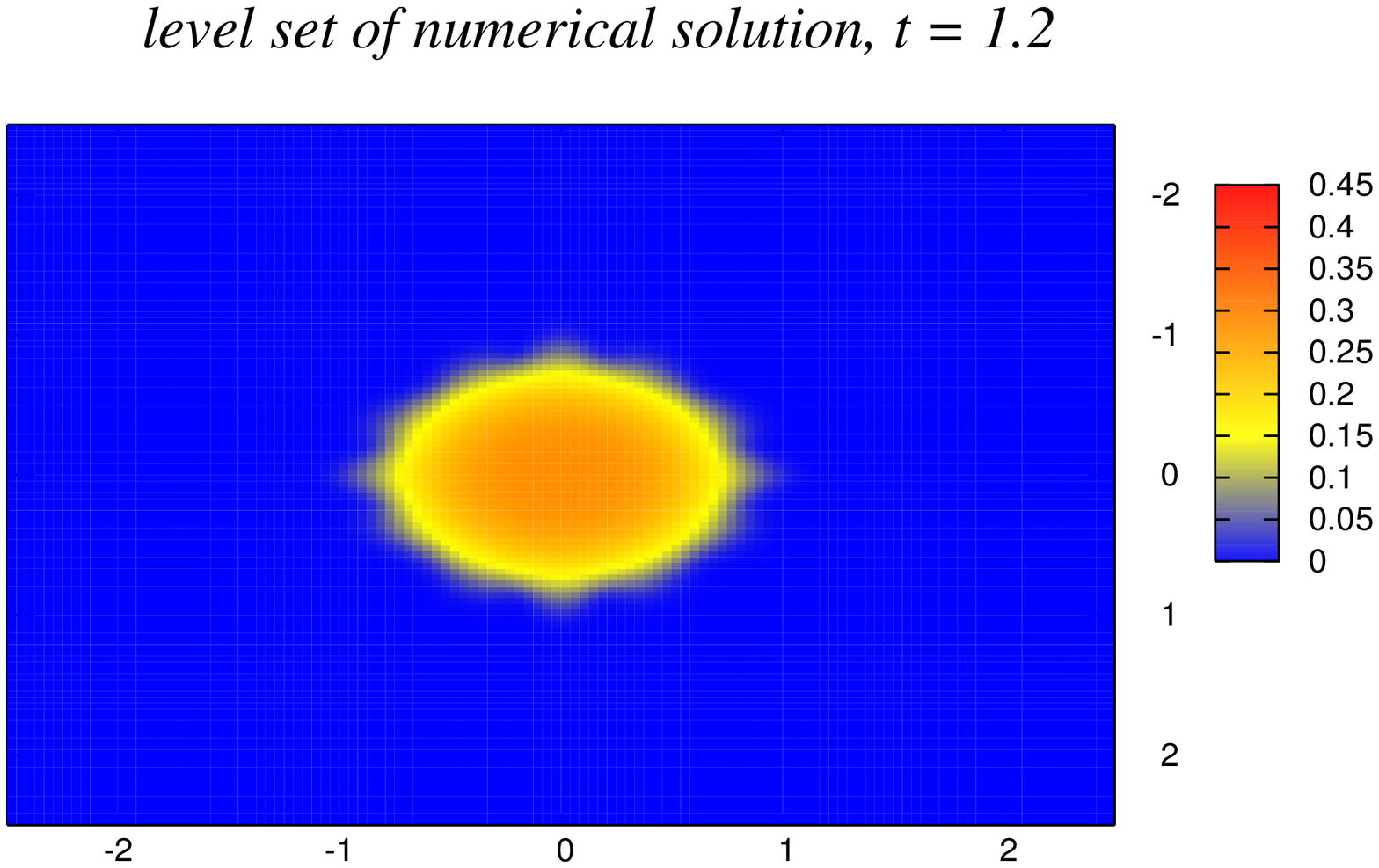}    
&
\includegraphics[width=7.75cm,height=6.75cm]{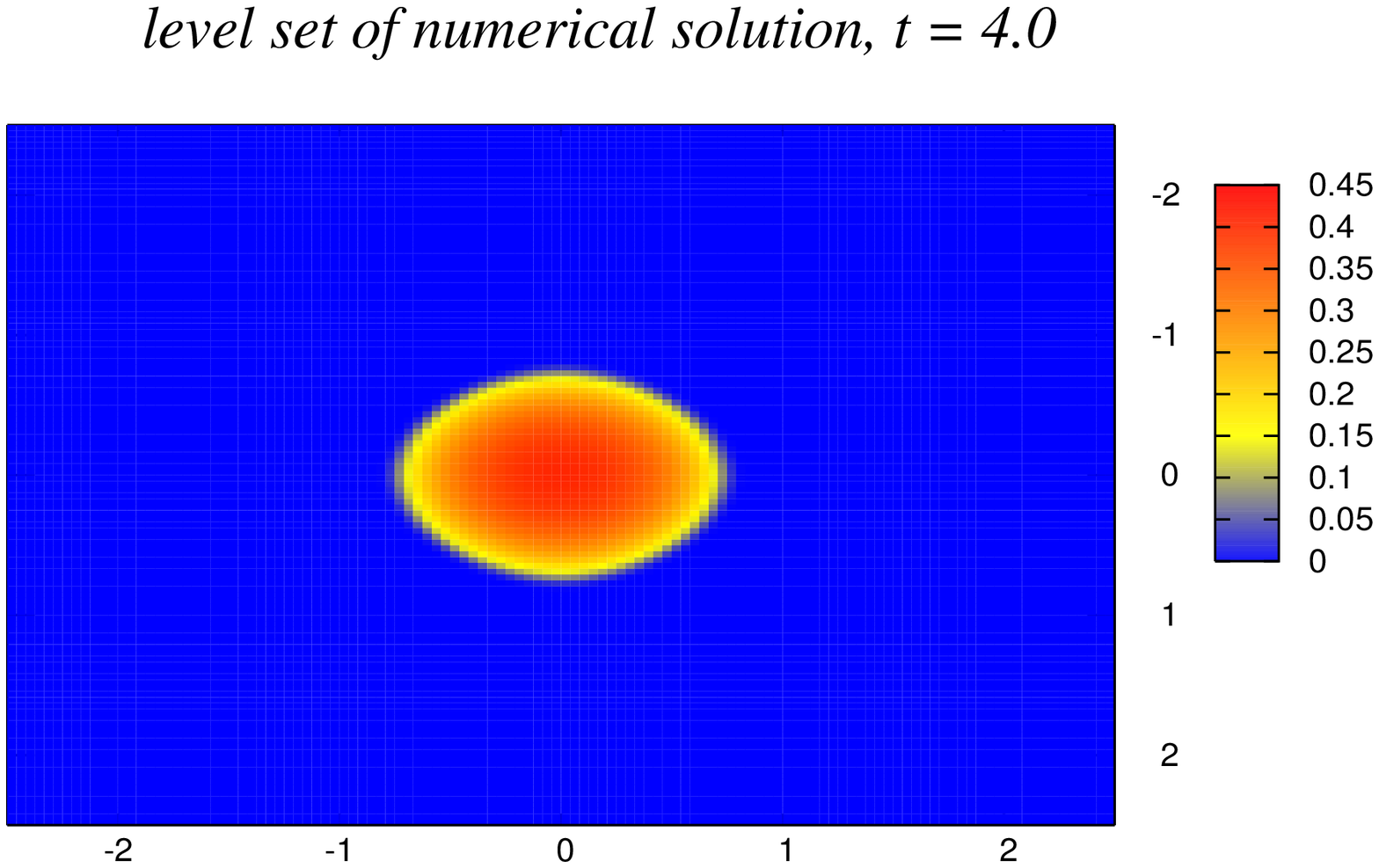}    
\end{tabular}
\caption{Nonlinear Fokker-Planck solution: convergence toward equilibrium (Barenblatt-Pattle distribution) obtained with the first order method (\ref{sch:01}) using $n_x=100$   at time $t=0.1$, $0.4$, $0.8$, $1.0$, $1.2$ and $4$ with a large time step.}
\label{fig:06-1}
\end{figure}
\end{center}

\begin{center}
\begin{figure}[htbp]
\begin{tabular}{cc}
\includegraphics[width=7.75cm]{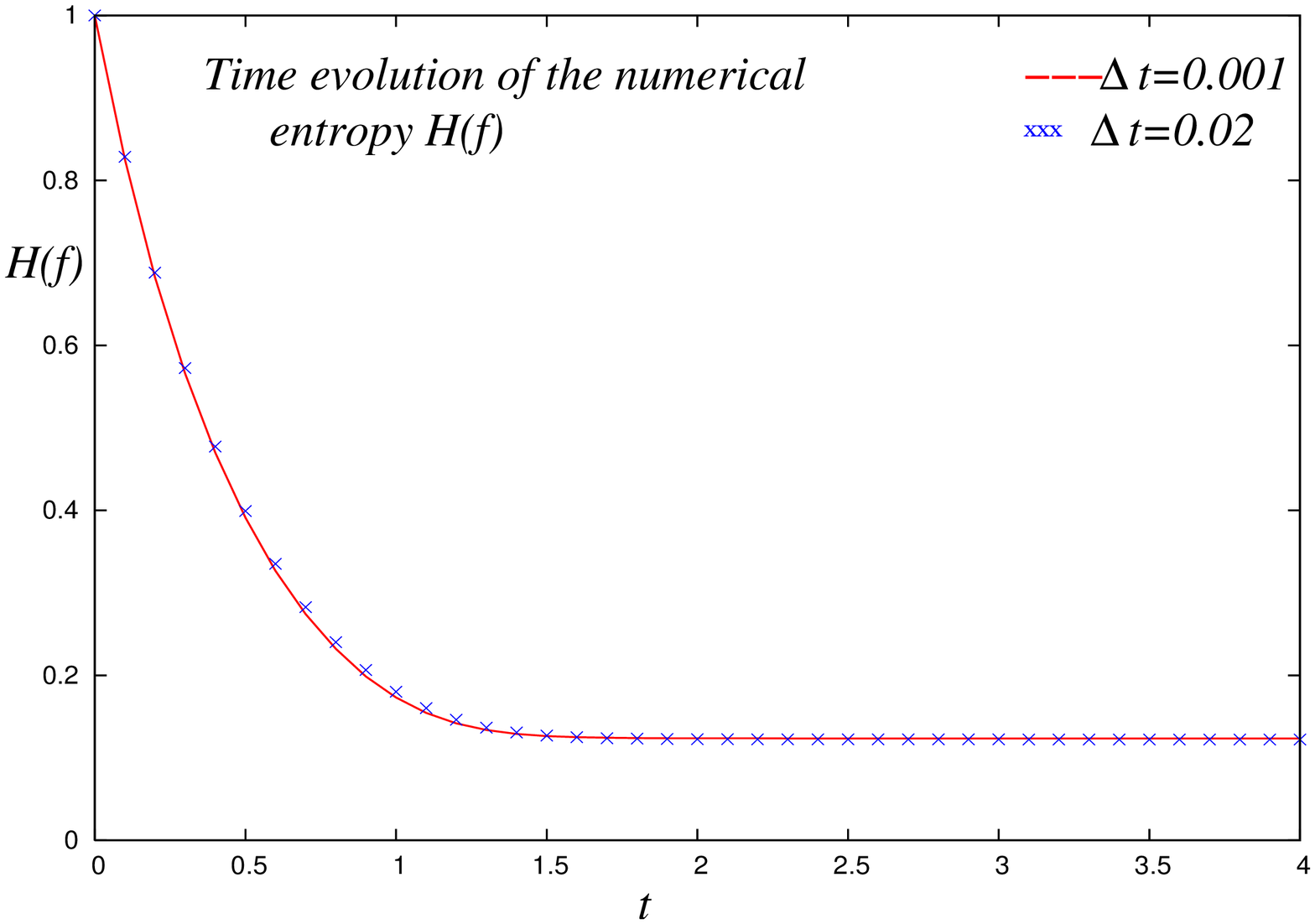}    
&
\includegraphics[width=7.75cm]{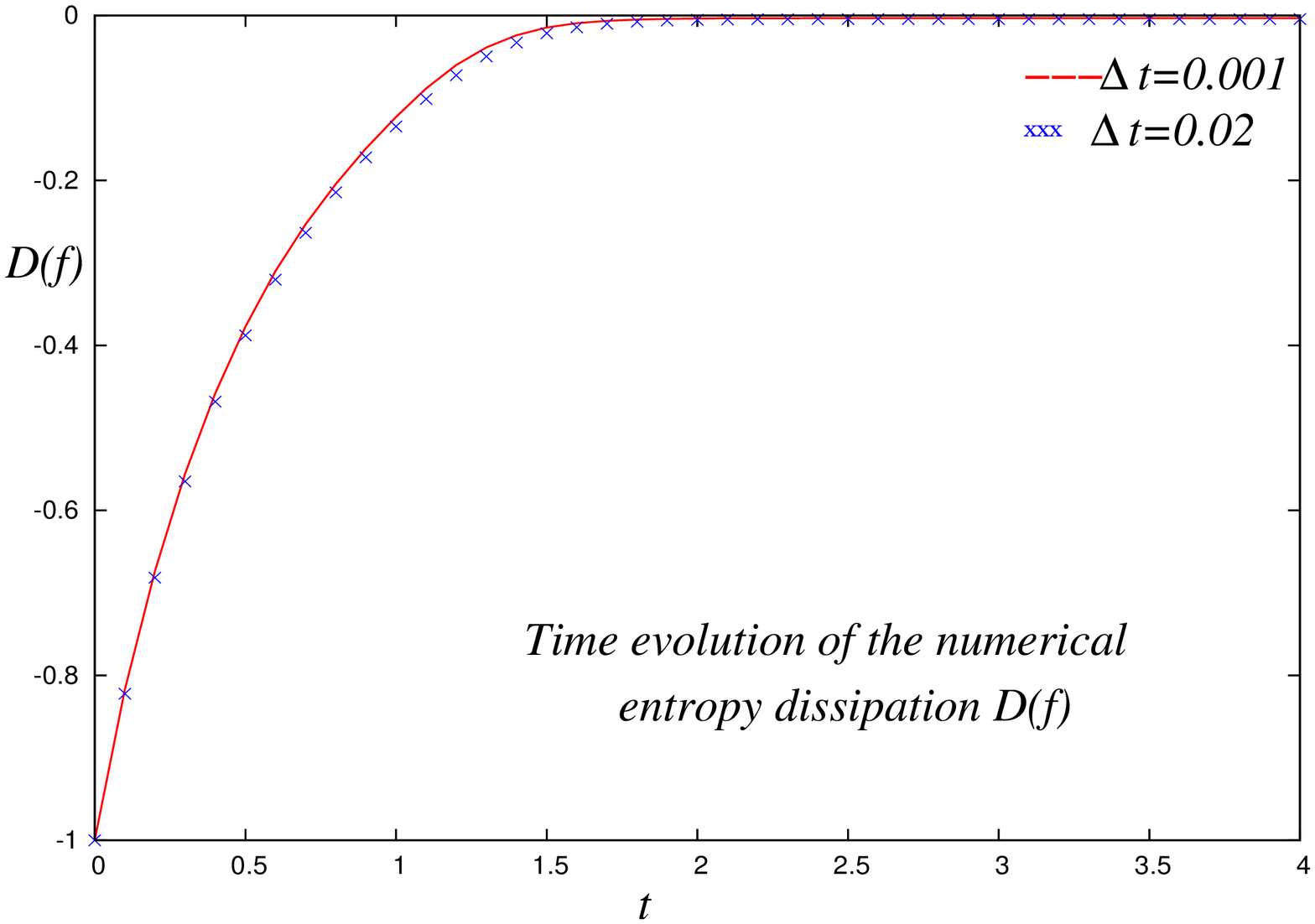}    
\end{tabular}
\caption{Nonlinear Fokker-Planck solution: convergence toward equilibrium (Barenblatt-Pattle distribution) obtained with the first order method
 (\ref{sch:01}) using $n_x=100$  with $\Delta t=0.02$ and $0.001$.}
\label{fig:06-2}
\end{figure}
\end{center}

\section{Conclusion}

We have proposed a new class of numerical schemes for physical problems with
multiple time and spatial scales described by a still nonlinear source
term. A prototype equation of this type is the Boltzmann equation for
rarified gas. When the Knudsen number is small, the stiff collision term of
the Boltzmann equation drives the density distribution to the local Maxwellian,
thus the macroscopic quantities such as mass, velocity and temperature
are be evolved according to fluid dynamic equations such as the Euler or
Navier-Stokes equations. Asmptotic-preserving (AP) schemes for kinetic 
equations have been 
successful since they capture the fluid dynamic behavior even without 
numerically resolving the small Knudsen number. However, the AP schemes need
to treat the stiff collision terms implicitly, thus it yeilds a complicated
numerical algebraic problem due to the nonlinearity and nonlocality of the
collision term. In this paper, we propose to augement the nonlinear
Boltzmann collision operator by a much simpler BGK collision operator,
and impose implicity only on the BGK operators which can be handled much
more easily. We show that this method is AP in the Euler regime, and is
also consistent to the Navier-Stokes approximations for suitably small
time steps and mesh sizes.  Numerical examples, including those with
mixing scales and non-local-Maxwellian initial data, decomstrate the AP
property as well as uniform convergence (in the Knudsen number) of this
method.

This method can be extended to a wide class of PDEs (or ODEs) with stiff
source terms that admit a stable and unique local equilibirum.  We use
the Fokker-Planck equation as an example to illustrate this point, and
will pursue more applications in the future.

\bigskip

\subsection*{Acknowledgments}  F. Filbet thanks Ph. Lauren\c cot, M. Lemou, P. Degond and L. Pareschi for usefull discussions on the topic.


\begin{flushleft} 
\signff 
\end{flushleft}
\vspace{-4.25cm}
\begin{flushright} 
\signsj 
\end{flushright}

\end{document}